\newcommand\ioto{\stackrel{\mathclap{\iota}}{\to}}
\newcommand\pito{\stackrel{\mathclap{\pi}}{\to}}
\DeclareMathOperator{\End}{End}
\DeclareMathOperator{\Aut}{Aut}
\theoremstyle{definition}
\newtheorem{theor}{Theorem}[section]
\newtheorem{lem}[theor]{Lemma}
\newtheorem{co}[theor]{Corollary}
\newtheorem{prop}[theor]{Proposition}
\newtheorem{definition}{Definition}[section]
\begin{document}
	
\title{Anti-commutative algebras and their groups of automorphisms}
\author{\'Agota Figula and P\'eter T. Nagy}
\footnotetext{2010 {\em Mathematics Subject Classification:  17A30, 17A36, 17D99, 17D10} }
\footnotetext{{\em Key words and phrases:} anti-commutative algebra,  binary Lie algebra, extensions and  semidirect products, weak isomorphism, group of automorphisms}
\footnotetext{This paper was supported by the National Research, Development and Innovation Office (NKFIH) Grant No. K132951. }
\footnotetext{Corresponding Author: P\'eter T. Nagy}	
\maketitle

\begin{abstract}
We determine normal forms of the multiplication of four-dimensional anti-commutative algebras over a field $\mathbb K$ of characteristic zero having an analogous family of flags of subalgebras as the four-dimensional non-Lie binary Lie algebras, and hence can be considered as the closest relatives of binary Lie algebras. 
These algebras are extensions of $\mathbb K$ by the 3-dimensional nilpotent Lie algebra and at the same time extensions of a two-dimensional Lie algebra by a two-dimensional abelian algebra. 
We describe their groups of automorphisms as extensions of a subgroup of the group of automorphisms of the three-dimensional nilpotent Lie algebra by $\mathbb K$. 
\end{abstract}

\maketitle

\section{Introduction}
The classification of tensors and the determination of their normal form is an old problem in mathematics (cf. Chapter 10 in \cite{landsberg}). Torsion tensors are defined in differential geometry as bilinear antisymmetric vector-valued maps on the tangent space of a manifold, the four-dimensional space-time manifolds with torsion tensor fields have important applications in the Einstein-Cartan theory of unified field theory (\cite{capsto}, \cite{petti}). In our next study, we consider such tensors over four-dimensional vector spaces interpreting them as anti-commutative multiplications, defining a kind of generalized Lie algebras. We find the normal forms of these multiplications and classify the algebras defined by these operations as well as their automorphism groups.\\
Various generalizations of Lie algebras occur naturally in many contexts in modern mathematics and physics.  Malcev algebras were introduced by A. Malcev in 1955 \cite{malcev} as tangent algebras of local analytic Moufang loops. In the same paper, the binary Lie algebra is defined as an anti-commutative algebra such that any subalgebra generated by two elements is a Lie algebra, hence these algebras are the tangent algebras of local analytic diassociative  loops (\cite{kuzmin3}). Identities of the variety of binary
Lie algebras were described by Gainov \cite{gainov0}. Note that every Lie algebra is a Malcev
algebra and every Malcev algebra is a binary Lie algebra.
The systematic study of Malcev
and binary Lie algebras began with the work of A. A. Sagle \cite{sagle}.
Classifications, extensions and degenerations of binary Lie algebras, Malcev algebras have been studied by many authors \cite{abdelwahab}, \cite{abdelwahab2}, \cite{elduque}, \cite{elduqueshestakov}, \cite{gainov}, \cite{gavrilov}, \cite{ismailov}, \cite{kaygorodov},  \cite{kuzmin1}, \cite{kuzmin2}. \\
One of the most natural and simple generalizations of binary Lie and Malcev algebras is the \emph{anti-commutative algebra} defined by an anti-symmetric bilinear operation on a vector space over a field. Several articles have recently appeared on the classification problems of such low-dimensional algebras (called also zeropotent algebras), with special attention to the nilpotent case (cf. \cite{kaykhrlop} and \cite{cedilnik}, \cite{kobayashi}, \cite{shirayanagi}).  Our paper is devoted to the study of a solvable class of $4$-dimensional anti-commutative algebras. The definition of the studied class of solvable anti-commutative algebras is suggested by the structure of 4-dimensional binary Lie algebras.
Binary Lie algebras of dimension $\le 4$ over a field $\mathbb K$ of characteristic different from $2$ have been described and the non-Lie binary Lie algebras of minimal dimension $4$ have been determined by \cite{gainov}. The case of characteristic $2$ of $\mathbb K$ is treated in \cite{kuzmin2}. These algebras are extensions of $\mathbb K$ by the 3-dimensional nilpotent Lie algebra and at the same time extensions of a two-dimensional Lie algebra by a two-dimensional abelian algebra. A family of flags of subalgebras defined by algebraic properties
can be associated to these binary Lie algebras.
Our purpose is to classify a class of $4$-dimensional anti-commutative algebras having an analogous family of flags of subalgebras and hence can be considered as the closest relatives of binary Lie algebras. We call these algebras $\mathcal{BL}_4$-algebras. A general method of investigation of isomorphism classes of algebras is to consider the orbits of the action of the general linear group via change of basis, the orbits under this action correspond to isomorphism classes (cf. \cite{BuSt}). Instead of considering the general linear group we limit our attention to the subgroup consisting of linear maps preserving the given associated families of flags of these algebras.\\
We summarize the basic concepts on binary Lie and Malcev algebras and on the automorphism group of the 3-dimensinal nilpotent Lie algebra in \S 2. The associated family of flags and the bases adapted to these flags, called distinguished bases, and the group of weak isomorphisms, preserving the associated family of flags, are introduced in \S 3. We find a matrix equation describing weak isomorphisms which are isomorphisms in \S 4. We use this matrix equation for the determination of normal forms of $\mathcal{BL}_4$-algebras in \S 5, which means that we select on each orbit elements of simple shape of the corresponding multiplication relations. \S 6
is devoted to the study of the isomorphism problems of $\mathcal{BL}_4$-algebras with multiplication of normal form. In \S 7 we give the classification of $\mathcal{BL}_4$-algebras using their multiplication of normal form. We introduce in \S 8 non-isomorphic groups which are candidates for groups of automorphisms of  $\mathcal{BL}_4$-algebras and investigate their properties. In \S 9 we use the matrix equation describing weak isomorphisms which are isomorphisms for the classification of groups of automorphisms of $\mathcal{BL}_4$-algebras.

	\section{Preliminaries}

We consider a (nonassociative) anti-commutative algebra $\mathfrak{g}$ over a field $\mathbb{K}$ of
characteristic zero.   The multiplication of the elements $x,y\in\mathfrak{g}$ will be denoted by $x\cdot y$. The multiplication symbol will be omitted whenever it does not cause confusion, the multiplication symbol $\cdot$ considered less binding than the juxtaposition, i.e. $xy\cdot u$ is a short form of $(x\cdot y)\cdot u$. The group of automorphisms of $\mathfrak{g}$ is denoted by $\Aut(\mathfrak{g})$, the algebra of linear maps of $\mathfrak{g}$ by $\End(\mathfrak{g})$. The left and the right multiplication maps $L_x,R_x:\mathfrak{g}\to \mathfrak{g}$ are defined by $L_x(t)= xt$, respectively, $R_x(t)= tx$,\; $x,t\in\mathfrak{g}$. The maps $L_x$, $R_x$ of $\mathfrak{g}$ differ only by sign, left and right ideals coincide, etc. The Jacobian $\mathcal{J}:\mathfrak{g}\times\mathfrak{g}\times\mathfrak{g}\to\mathfrak{g}$ is the map  $\mathcal{J}(x,y,z) = xy\cdot z+ zx\cdot y+ yz\cdot x$, $x, y,z\in\mathfrak{g}$. The commutator algebra $\mathfrak{g}^\prime = \mathfrak{g}\cdot\mathfrak{g}$ is an ideal, the second commutator algebra $\mathfrak{g}^{\prime\prime} = \mathfrak{g}^\prime\cdot\mathfrak{g}^\prime$ is a subalgebra in $\mathfrak{g}$.
Clearly, $\mathfrak{g}$ is a Lie algebra if and only if  $\mathcal{J}(\mathfrak{g}) =\{0\}$. An algebra $\mathfrak{g}$ is a \emph{binary Lie algebra} if the identity
\begin{equation}\label{binarylieiden}
\mathcal{J}(x,y,xy)=(y\cdot xy)x+(xy\cdot x)y =0
\end{equation}
holds for all $x, y \in \mathfrak{g}$. An algebra $\mathfrak{g}$ is a \emph{Malcev algebra} if for any $x, y, z \in \mathfrak{g}$  the identity
\begin{equation}\label{malceviden}
xy\cdot xz=(xy\cdot z)x+(yz\cdot x)x+(zx\cdot x)y  \nonumber 
\end{equation}
is satisfied.  
 An algebra $\mathfrak{g}$ is called decomposable if $\mathfrak{g}$ is the direct sum of subalgebras.\\
 An extension $\mathfrak{c}$ of an algebra $\mathfrak{b}$ by an algebra $\mathfrak{a}$ is a short exact sequence
 \[0\to \mathfrak{a}\ioto \mathfrak{c}\pito \mathfrak{b}\to 0,\]
 of algebras, where $\iota(\mathfrak{a})$ is an ideal of $\mathfrak{c}$ and $\pi$ induces an isomorphism of the factor algebra $\mathfrak{c}/\iota(\mathfrak{a})$ to $\mathfrak{b}$.  An extension $\mathfrak{c}$ is a semidirect sum of $\mathfrak{b}$  and $\mathfrak{a}$ if there exists a subalgebra $\widetilde{\mathfrak{b}}$ of $\mathfrak{c}$ such that $\pi(\widetilde{\mathfrak{b}})$ is isomorphic to $\mathfrak{b}$. \\
 Similarly, an extension $\mathcal{C}$ of a group $\mathcal{B}$  by a group $\mathcal{A}$ is a short exact sequence
 \[1\to \mathcal{A}\ioto\mathcal{C}\pito \mathcal{B}\to 1,\]
 where $\iota(\mathcal{A})$ is a normal subgroup of $ \mathcal{C}$ and $\pi:\mathcal{C}\to \mathcal{B}$ induces an isomorphism $\mathcal{C}/\iota( \mathcal{A})\to \mathcal{B}$, furthermore $\mathcal{C}$ is a semidirect product of $\mathcal{B}$ and $\mathcal{A}$ if there is a subgroup $\widetilde{\mathcal{B}}$ of $\mathcal{C}$ such that $\pi(\widetilde{\mathcal{B}})$ is isomorphic to $\mathcal{B}$.
 \\
Let two bases $\mathcal{E}=\{e_0,e_1,\dots,e_{n-1}\}$, $\hat{\mathcal{E}}=\{\hat{e}_0,\hat{e}_1,\dots,\hat{e}_{n-1}\}$ be given in the algebra $\mathfrak{g}$. The vector space $\mathbb K^n$ can be endowed with algebra structures $\mathbb K^n(\mathcal{E})$ and $\mathbb K^n(\hat{\mathcal{E}})$, so that the linear coordinate maps $\phi_{\mathcal{E}}:\mathfrak{g}\to\mathbb K^n$ and $\phi_{\hat{\mathcal{E}}}:\mathfrak{g}\to\mathbb K^n$ become isomorphisms of algebras. The composition $\phi_{\hat{\mathcal{E}}}\circ\phi_{\mathcal{E}}^{-1}:\mathbb K^n(\mathcal{E})\to\mathbb K^n(\hat{\mathcal{E}})$ is an isomorphism between algebras corresponding to the change of the bases $\mathcal{E}$ and $\hat{\mathcal{E}}$. \\[1ex]
{\bf Theorem}  (Gainov)
\emph{The minimal dimension of non-Lie binary Lie algebras is $4$, they are given on $\mathbb K^4$ up to isomorphism by the mutually non-isomorphic anti-commutative algebras with non-vanishing multiplications
\begin{equation}\label{gainov}\begin{split}
&e_1e_2 = e_3,\; e_0e_3 = e_3, \\
&e_1e_2 = e_3,\;  e_0e_1 = e_1,\; e_0e_2 = e_2,\; e_0e_3 =\mu e_3,\quad \mu\in \mathbb K.
	\end{split}  \end{equation}
The algebra belonging to the parameter $\mu\in \mathbb K$ is a Lie algebra for $\mu = 2$, a non-Lie Malcev algebra for $\mu = -1$, and non-Malcev binary Lie algebra if $\mu\notin \{-1,2\}$.}\\[1ex]
In the following we will classify the four-dimensional anti-commutative algebras having the same ideal structure as these binary Lie algebras \eqref{gainov}.

\subsection*{$3$-dimensional non-abelian nilpotent Lie algebra}
Denote  $\mathfrak{n}$ the $3$-dimensional non-abelian nilpotent Lie algebra on  $\mathbb K^3$, the ideal $\mathfrak{n}^\prime$ has dimension one.
The group $\Aut(\mathfrak{n})$ of automorphisms of $\mathfrak{n}$ preserves the commutator $\mathfrak{n}^\prime$, hence $\Aut(\mathfrak{n})$ is represented by the group of matrices
\begin{equation}\label{indaut}\begin{bmatrix}
		p^1 & q^1 & 0\\
		p^2 & q^2 & 0\\
		p^3 & q^3 & p^1q^2-p^2q^1
	\end{bmatrix}, \quad  (p^1q^2-p^2q^1) \neq 0,\end{equation}
with respect to a basis $\{e_1,e_2,e_3\}$ satisfying $e_1e_2 = e_3$. Using decomposition of matrices \eqref{indaut} for the blocks $\begin{bmatrix}
	p^1 & q^1\\
	p^2 & q^2
\end{bmatrix}$,  ${\bf a}^t=\begin{bmatrix}
	p^3 & q^3
\end{bmatrix}$, $|A| =p^1q^2-p^2q^1$, the multiplication can be expressed as
$$\begin{bmatrix}
	A & 0\\
	{\bf a}^t & |A|
\end{bmatrix}\begin{bmatrix}
	B & 0\\
	{\bf b}^t & |B|
\end{bmatrix}=\begin{bmatrix}
	AB & 0\\
	{\bf a}^t B + |A|{\bf b}^t& |AB|
\end{bmatrix}.$$
Denote $\mathbb K^*$ the multiplicative group of $\mathbb K$ and consider the bijective map $$\Delta:\mathbb K^*\times SL(2,\mathbb K)\times \mathbb K^2\to\Aut(\mathfrak{n}),\quad  (\xi,X,{\bf x}^t)\mapsto\begin{bmatrix}
	\xi X & 0\\
	\xi {\bf x}^t & \xi
\end{bmatrix}$$
and its inverse
$$\Delta^{-1}:\Aut(\mathfrak{n})\to\mathbb K^*\times SL(2,\mathbb K)\times \mathbb K^2,\quad  \begin{bmatrix}
	A & 0\\
	{\bf a}^t & |A|
\end{bmatrix}\mapsto (|A|,\frac{A}{|A|},\frac{{\bf a}^t}{|A|}).$$
We introduce on $\mathbb K^*\times SL(2,\mathbb K)\times \mathbb K^2$ the multiplication
$$(\xi,X,{\bf x}^t)\cdot (\eta,Y,{\bf y}^t)=(\xi\eta,XY,{\bf x}^tY+{\bf y}^t),$$
then $\mathbb K^*\times SL(2,\mathbb K)\times \mathbb K^2$ becomes a group isomorphic to $\Aut(\mathfrak{n})$. It follows
\begin{lem}
	The group $\Aut(\mathfrak{n})$ is the direct product $\mathbb K^*\times SL(2,\mathbb K)\ltimes\mathbb K^2$ of $\mathbb K^*$ with the special affine group  $SA(2,\mathbb K)$ isomorphic to the semidirect product   $SA(2,\mathbb K)\cong SL(2,\mathbb K)\ltimes\mathbb K^2$.
\end{lem}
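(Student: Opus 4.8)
The plan is to transport the known matrix group structure of $\Aut(\mathfrak{n})$ onto the parameter set $\mathbb K^*\times SL(2,\mathbb K)\times\mathbb K^2$ along the explicit map $\Delta$, and then to read off the direct-times-semidirect decomposition directly from the resulting multiplication formula. Concretely I would proceed in three steps: first confirm that $\Delta$ is a bijection with inverse $\Delta^{-1}$, next verify that $\Delta$ intertwines the product $(\xi,X,\mathbf x^t)\cdot(\eta,Y,\mathbf y^t)=(\xi\eta,XY,\mathbf x^tY+\mathbf y^t)$ with matrix multiplication in $\Aut(\mathfrak{n})$, and finally interpret the shape of this group law.

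For the homomorphism property, which is the computational heart of the argument, I would substitute the block form $\Delta(\xi,X,\mathbf x^t)=\begin{bmatrix}\xi X&0\\\xi\mathbf x^t&\xi\end{bmatrix}$ into the block-multiplication rule recorded above. A direct computation of the product $\Delta(\xi,X,\mathbf x^t)\,\Delta(\eta,Y,\mathbf y^t)$ produces diagonal blocks $\xi\eta XY$ and $\xi\eta$ and lower-left block $\xi\mathbf x^t\,\eta Y+\xi\eta\,\mathbf y^t=\xi\eta(\mathbf x^tY+\mathbf y^t)$, which is exactly $\Delta(\xi\eta,XY,\mathbf x^tY+\mathbf y^t)$. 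Hence $\Delta$ carries the stated product to matrix multiplication, so the stated product is a group law and $\Delta$ is a homomorphism. Bijectivity is immediate from the explicit formula for $\Delta^{-1}$, whence $\Delta$ is an isomorphism of groups.

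It then remains to decompose the target group. In the product $(\xi\eta,XY,\mathbf x^tY+\mathbf y^t)$ the first coordinate multiplies on its own and commutes with the others, so the factor $\mathbb K^*$ splits off as a direct factor; the remaining two coordinates obey $(X,\mathbf x^t)(Y,\mathbf y^t)=(XY,\mathbf x^tY+\mathbf y^t)$, which is precisely the semidirect product $SL(2,\mathbb K)\ltimes\mathbb K^2$ in which $SL(2,\mathbb K)$ acts on the row vectors $\mathbb K^2$ by right multiplication, i.e.\ the special affine group $SA(2,\mathbb K)$. Combining the two observations gives $\Aut(\mathfrak{n})\cong\mathbb K^*\times\bigl(SL(2,\mathbb K)\ltimes\mathbb K^2\bigr)=\mathbb K^*\times SA(2,\mathbb K)$, as claimed.

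The step requiring the most care, and the one I expect to be the main obstacle, is checking that $\Delta$ is genuinely well defined as a bijection \emph{onto} $\Aut(\mathfrak{n})$: one must confirm that pulling the scalar $\xi=|A|$ out of the $2\times2$ block and of the translation part is compatible with the structural constraint that the lower-right entry of an automorphism matrix equals the determinant of its upper-left block. I would verify this compatibility explicitly on $\Delta^{-1}$ before invoking the homomorphism computation, since the entire identification hinges on the single parameter $\xi$ simultaneously encoding the determinant block of \eqref{indaut} and decoupling as the central factor $\mathbb K^*$.
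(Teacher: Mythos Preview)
Your plan is exactly the paper's: the lemma appears immediately after the paragraph introducing $\Delta$, $\Delta^{-1}$, and the product formula, prefaced only by ``It follows''. You have also correctly singled out the one delicate point, namely whether $\Delta$ really is a bijection onto $\Aut(\mathfrak n)$.

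That verification, however, does not go through, and the gap lies in the paper's setup rather than in your outline. For $X\in SL(2,\mathbb K)$ the upper-left block of $\Delta(\xi,X,\mathbf x^t)=\begin{bmatrix}\xi X&0\\\xi\mathbf x^t&\xi\end{bmatrix}$ has determinant $\xi^{2}$, while the lower-right entry is $\xi$; the form \eqref{indaut} requires these to coincide, so the image lies in $\Aut(\mathfrak n)$ only when $\xi=1$. Dually, $\Delta^{-1}$ sends $A$ to $A/|A|$, whose determinant is $|A|^{-1}$ rather than $1$, so it does not land in $SL(2,\mathbb K)$. In fact the asserted direct-product decomposition cannot hold at all: such a factor $\mathbb K^*$ would be central, whereas a short computation (forcing $A=\lambda I$ from $AB=BA$ for all $B$, and then $\lambda=1$, $\mathbf a=0$ from the lower-left block identity) shows that $\Aut(\mathfrak n)$ has trivial centre. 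Your homomorphism calculation is correct as a map into $GL(3,\mathbb K)$, and your last paragraph correctly reads off the group law on the parameter space; what fails is precisely the compatibility you flagged in your final paragraph, so your instinct to scrutinise that step was well placed.
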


\section{Weakly isomorphic algebras, $\mathcal{BL}_4$-algebras}

	We consider the following class of anti-commutative algebras:
\begin{definition}\label{M4algebradef}
	A $4$-dimensional anti-commutative algebra $\mathfrak{c}$ is called \emph{$\mathcal{BL}_4$-algebra} if
	\begin{enumerate}
		\item[(i)] $\mathfrak{c}$ is non-decomposable and has an ideal $\mathfrak{p}$ isomorphic to $\mathfrak{n}$ satisfying  $\mathfrak{c}' = \mathfrak{p}$ or $\mathfrak{c}' = \mathfrak{p}'$,
		\item[(ii)]  $\mathfrak{c}$ contains a $2$-dimensional Lie subalgebra $\mathfrak{a}$ with multiplication $e_0e_1 = x_1^1 e_1$, $x_1^1\in\mathbb{K}$ with respect to a basis $\{e_0,e_1\}$ of $\mathfrak{a}$, and a $2$-dimensional abelian ideal $\mathfrak{b}$ such that $\mathfrak{c}$ is the semidirect sum of  $\mathfrak{a}$ and  $\mathfrak{b}$ and $\mathfrak{p}'\subset\mathfrak{b}\subset\mathfrak{p}$.
	\end{enumerate}
\end{definition}
\begin{lem}\label{semidirect} The ideal
	$\mathfrak{p}$ and the subspace $\mathfrak{p}'$ are preserved by all automorphisms of $\mathfrak{c}$.
\end{lem}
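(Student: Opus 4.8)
The plan is to realize $\mathfrak{p}$ and $\mathfrak{p}'$ as subspaces built intrinsically from the commutator algebra $\mathfrak{c}' = \mathfrak{c}\cdot\mathfrak{c}$, which is automatically invariant under every automorphism: if $\varphi\in\Aut(\mathfrak{c})$ then $\varphi(\mathfrak{c}') = \varphi(\mathfrak{c})\cdot\varphi(\mathfrak{c}) = \mathfrak{c}\cdot\mathfrak{c} = \mathfrak{c}'$, since $\varphi$ is a surjective homomorphism; the same computation gives $\varphi(\mathfrak{c}'') = \mathfrak{c}''$ for the second commutator. By Definition \ref{M4algebradef}(i) there are exactly two cases, $\mathfrak{c}' = \mathfrak{p}$ and $\mathfrak{c}' = \mathfrak{p}'$, and I would treat them separately.

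In the case $\mathfrak{c}' = \mathfrak{p}$ the ideal $\mathfrak{p}$ is invariant at once, and $\mathfrak{p}' = \mathfrak{p}\cdot\mathfrak{p} = \mathfrak{c}'\cdot\mathfrak{c}' = \mathfrak{c}''$ is invariant as well; equivalently $\varphi(\mathfrak{p}') = \varphi(\mathfrak{p})\cdot\varphi(\mathfrak{p}) = \mathfrak{p}\cdot\mathfrak{p} = \mathfrak{p}'$ once $\varphi(\mathfrak{p}) = \mathfrak{p}$ is known. So this case requires no further work.

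The substance lies in the case $\mathfrak{c}' = \mathfrak{p}'$, where only the one-dimensional $\mathfrak{p}'$ is presented as invariant and $\mathfrak{p}$ must be recovered. Here I would characterize $\mathfrak{p}$ as the centralizer $Z = \{x\in\mathfrak{c} : x\cdot\mathfrak{c}' = 0\}$ of the commutator algebra. First, $Z$ is characteristic: for $x\in Z$ and $y\in\mathfrak{c}'$ one has $\varphi(x)\cdot y = \varphi\bigl(x\cdot\varphi^{-1}(y)\bigr) = 0$, because $\varphi^{-1}(y)\in\mathfrak{c}'$ by invariance of $\mathfrak{c}'$ under $\varphi^{-1}$. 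Next, $\mathfrak{p}\subseteq Z$, since $\mathfrak{p}\cong\mathfrak{n}$ has derived ideal $\mathfrak{p}' = \mathfrak{c}'$ equal to its centre, whence $\mathfrak{p}\cdot\mathfrak{c}' = 0$. As $\dim\mathfrak{p} = 3$ and $\dim\mathfrak{c} = 4$, the equality $\mathfrak{p} = Z$ will follow once I rule out $Z = \mathfrak{c}$, that is, once I show $\mathfrak{c}'$ cannot be central in $\mathfrak{c}$.

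This last exclusion is the main obstacle, and I would settle it using non-decomposability. If $\mathfrak{c}\cdot\mathfrak{c}' = 0$, then, since $\mathfrak{c}\cdot\mathfrak{c} = \mathfrak{c}'$ is one-dimensional, the multiplication descends to a nonzero alternating form $\omega$ on the three-dimensional space $\mathfrak{c}/\mathfrak{c}'$ with values in $\mathfrak{c}'\cong\mathbb{K}$. An alternating form in odd dimension is degenerate, so $\omega$ has a one-dimensional radical; lifting a radical vector gives a central element $u\in\mathfrak{c}$ with $u\notin\mathfrak{c}'$, and taking a three-dimensional subspace $\mathfrak{q}\supset\mathfrak{c}'$ complementary to $\langle u\rangle$ yields $\mathfrak{c} = \langle u\rangle\oplus\mathfrak{q}$ as a direct sum of subalgebras (indeed of ideals, with $\mathfrak{q}\cong\mathfrak{n}$), contradicting that $\mathfrak{c}$ is non-decomposable. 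Hence $Z = \mathfrak{p}$, so $\mathfrak{p}$ is characteristic, which completes the second case and the proof.
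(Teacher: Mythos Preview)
Your proof is correct and follows essentially the same approach as the paper: both split into the two cases $\mathfrak{c}'=\mathfrak{p}$ and $\mathfrak{c}'=\mathfrak{p}'$, and in the second case identify $\mathfrak{p}$ with the centralizer $\{x\in\mathfrak{c}:x\cdot\mathfrak{c}'=0\}$, ruling out that this centralizer is all of $\mathfrak{c}$ via non-decomposability. The only cosmetic difference is that the paper asserts directly that $\mathfrak{c}$ would split as $\mathfrak{p}$ plus a one-dimensional summand, whereas you supply the alternating-form argument to produce the central complement explicitly.
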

\begin{proof} $\mathfrak{c}'$ is clearly a characteristic ideal. If $\mathfrak{c}' = \mathfrak{p}$ then $\mathfrak{p}$ and $\mathfrak{p}'$ are preserved by automorphisms. If $\mathfrak{c}' = \mathfrak{p}'$ then $\mathfrak{p}$ is contained in the subalgebra $\hat{\mathfrak{p}} =\{x\in\mathfrak{c}; xy=0\;\text{for any}\; y\in\mathfrak{p}'\}$. In the case $\hat{\mathfrak{p}}= \mathfrak{c}$ the non-decomposable algebra $\mathfrak{c}$ would be a direct sum of $\mathfrak{p}$ with a one-dimensional subalgebra, which is a contradiction. Hence $\mathfrak{p}=\hat{\mathfrak{p}}$ and the assertion is proved.
\end{proof}
\begin{definition} A sequence of subspaces $\mathfrak{p}^{\prime}\subset \mathfrak{b} \subset \mathfrak{p} \subset \mathfrak{c}$ is called \emph{adapted flag} of a $\mathcal{BL}_4$-algebra if  $\mathfrak{a}$ is a subalgebra, $\mathfrak{b}$ is an ideal of $\mathfrak{c}$, satisfying the condition (ii) in Definition 	\ref{M4algebradef}. \\
	A basis $\{e_0,e_1,e_2,e_3\}$ of $\mathfrak{c}$ is called \emph{distinguished} if for an adapted flag $\mathfrak{p}^{\prime}\subset \mathfrak{b} \subset \mathfrak{p} \subset \mathfrak{c}$ one has \[
	e_3\in\mathfrak{p}^{\prime},\; e_2\in\mathfrak{b}\setminus\mathfrak{p}^{\prime},\;  e_1\in\mathfrak{p}\setminus\mathfrak{b},\;  e_0\in\mathfrak{c}\setminus\mathfrak{p}.\]
\end{definition}
\begin{lem}
	A basis
	$\{e_0,e_1,e_2,e_3\}$ of the anti-commutative $\mathcal{BL}_4$-algebra $\mathfrak{c}$ is a distinguished basis if and only if
	\begin{enumerate}
		\item[(a)] $\{e_1,e_2,e_3\}$ is a basis of the non-abelian nilpotent ideal $\mathfrak{p}$ such that $e_3\in\mathfrak{p}^{\prime}$,
		\item[(b)] the matrix of the left multiplication map $L_0 =L_{e_0}\in \End(\mathfrak{p})$ with respect to the basis  $\{e_0,e_1,e_2,e_3\}$ has the form
		\begin{equation}\label{x1234} L_0 = \begin{bmatrix}
				x_1^1 & 0 & 0 \\
				0 & x_{2}^2 & x_3^2\\
				0 & x_2^3 & x_3^3
			\end{bmatrix}\neq \begin{bmatrix}
				0 & 0 & 0 \\
				0 & 0 & 0\\
				0 & 0 & 0
			\end{bmatrix}.\end{equation}
	\end{enumerate}
\end{lem}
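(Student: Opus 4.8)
The plan is to prove the two implications separately: for the forward direction I read (a) and (b) off the adapted flag together with the ideal/subalgebra data of Definition~\ref{M4algebradef}, and for the converse I rebuild an adapted flag out of (a) and (b).

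First I would record that, since $\mathfrak{p}\cong\mathfrak{n}$ has one-dimensional commutator, the adapted flag $\mathfrak{p}'\subset\mathfrak{b}\subset\mathfrak{p}\subset\mathfrak{c}$ has dimensions $1\subset2\subset3\subset4$. Hence the inclusions $e_3\in\mathfrak{p}'$, $e_2\in\mathfrak{b}\setminus\mathfrak{p}'$, $e_1\in\mathfrak{p}\setminus\mathfrak{b}$ force $\{e_1,e_2,e_3\}$ to be a basis of $\mathfrak{p}$ with $e_3$ spanning $\mathfrak{p}'$, which is (a), while $e_0\in\mathfrak{c}\setminus\mathfrak{p}$ together with compatibility with the semidirect decomposition makes $\{e_2,e_3\}$ a basis of $\mathfrak{b}$ and $\{e_0,e_1\}$ a basis of the complementary subalgebra $\mathfrak{a}$. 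As $\mathfrak{p}$ is an ideal, $e_0\cdot\mathfrak{p}\subseteq\mathfrak{p}$, so $L_0=L_{e_0}$ is a genuine endomorphism of $\mathfrak{p}$. Its shape follows from $\mathfrak{c}=\mathfrak{a}\oplus\mathfrak{b}$: because $\mathfrak{b}=\langle e_2,e_3\rangle$ is an ideal one has $e_0e_2,e_0e_3\in\mathfrak{b}$, which kills the $e_1$-components and gives $x_2^1=x_3^1=0$; because $e_0,e_1\in\mathfrak{a}$ with $e_0e_1=x_1^1e_1$, the first column of $L_0$ is $(x_1^1,0,0)^t$. This is exactly the block form in (b). For $L_0\neq0$ I argue by contradiction: if $e_0e_1=e_0e_2=e_0e_3=0$ then, since also $e_0e_0=0$, the line $\mathbb{K}e_0$ annihilates $\mathfrak{c}$ on both sides, so $\mathfrak{c}=\mathbb{K}e_0\oplus\mathfrak{p}$ would be a direct sum of subalgebras, contradicting the non-decomposability required in Definition~\ref{M4algebradef}(i).

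For the converse I assume (a) and (b) and put $\mathfrak{b}=\langle e_2,e_3\rangle$, $\mathfrak{a}=\langle e_0,e_1\rangle$. Since $e_3$ spans the one-dimensional $\mathfrak{p}'$, which is the centre of $\mathfrak{p}\cong\mathfrak{n}$, I get $e_1e_3=e_2e_3=0$ and $e_1e_2\in\mathfrak{p}'\subset\mathfrak{b}$; together with $e_0e_2,e_0e_3\in\mathfrak{b}$ from (b) this shows $\mathfrak{c}\cdot\mathfrak{b}\subseteq\mathfrak{b}$, so $\mathfrak{b}$ is an abelian ideal. The relation $e_0e_1=x_1^1e_1$ in (b) makes $\mathfrak{a}$ a two-dimensional Lie subalgebra of the form demanded in Definition~\ref{M4algebradef}(ii), and $\mathfrak{c}=\mathfrak{a}\oplus\mathfrak{b}$ is then a semidirect sum with $\mathfrak{p}'\subset\mathfrak{b}\subset\mathfrak{p}$. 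Thus $\mathfrak{p}'\subset\mathfrak{b}\subset\mathfrak{p}\subset\mathfrak{c}$ is an adapted flag, and the four membership conditions hold by construction, so the basis is distinguished.

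The bulk of this is bookkeeping; the one place where a global hypothesis genuinely enters is the non-triviality $L_0\neq0$, where I must invoke non-decomposability from Definition~\ref{M4algebradef}(i). The only other point needing care is the converse check that $\mathfrak{b}$ is an ideal, which rests on the centrality of $\mathfrak{p}'$ in the nilpotent Lie algebra $\mathfrak{n}$ rather than on any property of the individual basis vectors.
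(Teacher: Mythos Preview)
Your argument for the implication (a),(b) $\Rightarrow$ distinguished is exactly the paper's: you set $\mathfrak{b}=\langle e_2,e_3\rangle$, use that $e_3$ spans $\mathfrak{p}'$ (hence $e_1e_3=e_2e_3=0$ and $e_1e_2\in\mathbb{K}e_3$) together with the block shape of $L_0$ to see that $\mathfrak{b}$ is an abelian ideal, and then read off the semidirect decomposition $\mathfrak{c}=\mathfrak{a}\oplus\mathfrak{b}$ with $\mathfrak{a}=\langle e_0,e_1\rangle$. The paper dismisses the reverse implication as ``clear'' and gives no argument at all, so your treatment is strictly more detailed.

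There is, however, one soft spot in your forward direction. The literal definition of a distinguished basis only requires $e_0\in\mathfrak{c}\setminus\mathfrak{p}$ and $e_1\in\mathfrak{p}\setminus\mathfrak{b}$; it does not force either vector to lie in the complementary subalgebra $\mathfrak{a}$ that accompanies the adapted flag. Your phrase ``compatibility with the semidirect decomposition'' does not close this: for instance, replacing $e_0$ by $\hat e_0=e_0+e_2$ still gives $\hat e_0\in\mathfrak{c}\setminus\mathfrak{p}$, yet $\hat e_0\,e_1=e_0e_1-e_1e_2=x_1^1e_1-e_3$ acquires an $e_3$-component, so the first column of $L_{\hat e_0}$ no longer has the shape \eqref{x1234}. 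The paper does not address this either---its ``the converse statement is clear'' hides the very same issue---so this is really a looseness in the paper's Definition of distinguished basis (where the symbols $e_0,e_1$ are tacitly meant to agree with those of Definition~\ref{M4algebradef}(ii)) rather than a flaw specific to your write-up. It is nonetheless worth being explicit that you are invoking this intended reading when you assert that $\{e_0,e_1\}$ is a basis of $\mathfrak{a}$.
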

\begin{proof}
	The form \eqref{x1234} of the matrix $L_0$ means that $$e_0e_1 = x_1^1 e_1,\; x_1^1\in\mathbb{K}\quad\text{and}\quad L_0(\mathbb K e_2+\mathbb K e_3)\subseteq(\mathbb K e_2+\mathbb K e_3).$$ Since $e_1e_2\in\mathbb K e_3$ and $e_1e_3=0$, the subspace $\mathbb K e_2+\mathbb K e_3$ is an abelian ideal. Consequently, the conditions (a) and (b) imply that $\{e_0,e_1,e_2,e_3\}$ is a distinguished basis. The converse statement is clear.
\end{proof}

\begin{definition}
A linear map $\alpha:\mathfrak{c}\to{\mathfrak{c}^{\ast}}$ between  $\mathcal{BL}_4$-algebras is called a \emph{weak isomorphism} if $\alpha$ maps distinguished bases of $\mathfrak{c}$ to distinguished bases of ${\mathfrak{c}^{\ast}}$. If there is a weak isomorphism $\alpha:\mathfrak{c}\to{\mathfrak{c}^{\ast}}$ then the $\mathcal{BL}_4$-algebras $\mathfrak{c}$ and ${\mathfrak{c}^{\ast}}$ are called \emph{weakly isomorphic}.
\end{definition}
It follows immediately from the definition:
\begin{lem} \begin{enumerate}
		\item[(i)] For any distinguished bases $\{e_0,e_1,e_2,e_3\}$ of $\mathfrak{c}$ and  $\{\hat{e}_0,\hat{e}_1,\hat{e}_2,\hat{e}_3\}$ of $\mathfrak{c}^{\ast}$ there is a unique weak isomorphism  $\alpha:\mathfrak{c}\to{\mathfrak{c}^{\ast}}$ with $\alpha(e_i) = \hat{e}_i$, $i=0,1,2,3$.
		\item[(ii)]  A linear map  $\alpha:\mathfrak{c}\to\mathfrak{c}^{\ast}$ is a weak isomorphism if and only if its matrix $M_{\alpha}$ with respect to the given bases has the form
	\begin{equation}\label{Amatrixaut4} M_{\alpha} = \begin{bmatrix}
			u^0 &0 & 0 & 0 \\
			u^1 &p^1 & q^1 & 0 \\
			u^2 &p^2 & q^2 & 0 \\
			u^3 &p^3 & q^3 & r^3
		\end{bmatrix},\; |M_{\alpha}|=u^0(p^1 q^2- p^2 q^1) r^3  \neq 0,\; u^0, r^3, u^i, p^i, q^i \in\mathbb{K}, i = 1,2,3.\end{equation}
	\end{enumerate}
\end{lem}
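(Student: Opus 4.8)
The plan is to deduce both parts from a single observation: for a bijective linear map $\alpha\colon\mathfrak c\to\mathfrak c^\ast$, carrying distinguished bases to distinguished bases is equivalent to preserving the canonical flag, namely $\alpha(\mathfrak p')=(\mathfrak p^\ast)'$ and $\alpha(\mathfrak p)=\mathfrak p^\ast$. Writing the fixed distinguished basis of $\mathfrak c^\ast$ as $\{\hat e_0,\hat e_1,\hat e_2,\hat e_3\}$, one has $(\mathfrak p^\ast)'=\mathbb K\hat e_3$ and $\mathfrak p^\ast=\mathbb K\hat e_1+\mathbb K\hat e_2+\mathbb K\hat e_3$; by Lemma \ref{semidirect} these subspaces, like $\mathfrak p'\subset\mathfrak p$, are intrinsic to the algebras, so the reduction is legitimate on both sides.

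I would prove (ii) first. For the necessity, a distinguished basis $\{e_i\}$ satisfies $\mathbb K e_3=\mathfrak p'$ and $\mathbb K e_1+\mathbb K e_2+\mathbb K e_3=\mathfrak p$; if $\alpha$ is a weak isomorphism then $\{\alpha(e_i)\}$ is distinguished, whence $\alpha(e_3)\in(\mathfrak p^\ast)'$, $\alpha(e_1),\alpha(e_2)\in\mathfrak p^\ast$ and $\alpha(e_0)\notin\mathfrak p^\ast$. Read off in the basis $\{\hat e_i\}$, these say that column $3$ of $M_\alpha$ is $(0,0,0,r^3)^{t}$, that columns $1$ and $2$ have vanishing top entry, and that $u^0\neq0$; this is exactly the zero pattern of \eqref{Amatrixaut4}, and bijectivity of $\alpha$ gives $|M_\alpha|=u^0(p^1q^2-p^2q^1)r^3\neq0$. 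For the sufficiency one reads the same equivalence backwards: the pattern \eqref{Amatrixaut4} forces $\alpha(\mathfrak p')=(\mathfrak p^\ast)'$, $\alpha(\mathfrak p)=\mathfrak p^\ast$ and $\alpha(e_0)\notin\mathfrak p^\ast$, so the $\alpha$-image of any distinguished basis already satisfies condition (a) of the preceding characterisation lemma.

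The step I expect to be the main obstacle is condition (b) for the image, namely that the left multiplication $L_{\alpha(e_0)}$ is again block-diagonal in the image basis. Since a weak isomorphism need not respect the multiplication, $L_{\alpha(e_0)}$ is \emph{not} the $\alpha$-conjugate of $L_{e_0}$, so block-diagonality is not inherited for free. I would circumvent this by exploiting the freedom in the adapted flag rather than fixing $\{e_i\}$: choose $f_0\in\mathfrak c\setminus\mathfrak p$ together with an eigenvector $f_1$ of $L_{f_0}$ lying in $\mathfrak p\setminus\mathfrak p'$ for which $\alpha(f_1)$ is an eigenvector of $L_{\alpha(f_0)}$, and let $\mathbb K f_2+\mathbb K f_3$ be a complementary $L_{f_0}$-invariant plane containing $\mathfrak p'$. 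This yields a distinguished basis of $\mathfrak c$ whose $\alpha$-image is distinguished, so $\alpha$ is a weak isomorphism; the only real content is the simultaneous eigenvector matching, which one arranges by varying $f_0$ over $\mathfrak c\setminus\mathfrak p$.

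Finally, (i) is then immediate. Uniqueness holds because a linear map is determined by the images of a basis. For existence, the map defined by $\alpha(e_i)=\hat e_i$ has $M_\alpha$ equal to the identity matrix, which is of the form \eqref{Amatrixaut4}; by (ii) it is a weak isomorphism—equivalently, it carries the distinguished basis $\{e_i\}$ to the distinguished basis $\{\hat e_i\}$.
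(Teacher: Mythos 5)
Your part (i) and the necessity half of part (ii) are fine, and they are exactly the bookkeeping that the paper itself treats as ``immediate from the definition'': a weak isomorphism carries $\mathfrak{p}'$ onto $(\mathfrak{p}^{\ast})'$ and $\mathfrak{p}$ onto $\mathfrak{p}^{\ast}$, and these memberships are precisely the zero pattern of \eqref{Amatrixaut4}. The genuine gap is your third paragraph, i.e.\ the sufficiency half of (ii) when ``distinguished'' is read, as you read it, through the characterisation of distinguished bases by conditions (a)--(b) with \eqref{x1234}. Even as a plan it is incomplete: condition \eqref{x1234} for the image basis demands \emph{both} that $\alpha(f_1)$ be an eigenvector of $L_{\alpha(f_0)}$ \emph{and} that $\mathbb{K}\alpha(f_2)+\mathbb{K}\alpha(f_3)$ be $L_{\alpha(f_0)}$-invariant; your construction addresses only the former (and if the definition requires \emph{every} distinguished basis to have distinguished image, one matching basis would not suffice anyway). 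Worse, the ``simultaneous eigenvector matching'' you hope to arrange by varying $f_0$ does not exist in general. Take $\mathfrak{c}=\mathfrak{c}^{\ast}=\mathfrak{d}(\lambda,\mu)$ from \eqref{canonical}, i.e.\ $e_1e_2=e_3$, $e_0e_1=e_1$, $e_0e_2=\mu e_2$, $e_0e_3=\lambda e_3$, with $1,\mu,\lambda$ pairwise distinct and nonzero, and let $\alpha$ fix $e_0,e_3$ and send $e_1\mapsto e_1+e_2$, $e_2\mapsto e_1-e_2$; its matrix has the form \eqref{Amatrixaut4}. For any $f_0=s(e_0+ae_1+be_2+te_3)$, $s\neq 0$, the matrix of $L_{f_0}$ in the basis $\{e_1,e_2,e_3\}$ is $s\begin{bmatrix}1&0&0\\ 0&\mu&0\\ -b&a&\lambda\end{bmatrix}$, with three distinct eigenvalues $s,s\mu,s\lambda$; hence its eigenvectors outside $\mathfrak{p}'$ are the multiples of $v_1=e_1+\tfrac{b}{\lambda-1}e_3$ and $v_2=e_2-\tfrac{a}{\lambda-\mu}e_3$, each of which has a nonzero component along exactly one of $e_1,e_2$. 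In any distinguished basis $\{f_i\}$ the vector $f_1$ must be such an eigenvector, so $\alpha(f_1)$ is a multiple of $e_1+e_2+\tfrac{b}{\lambda-1}e_3$ or of $e_1-e_2-\tfrac{a}{\lambda-\mu}e_3$, which has nonzero components along \emph{both} $e_1$ and $e_2$; since $\alpha(f_0)$ again lies outside $\mathfrak{p}$, the same computation shows that every eigenvector of $L_{\alpha(f_0)}$ has at most one such component. Therefore $\alpha(f_0)\alpha(f_1)\notin\mathbb{K}\alpha(f_1)$ for every distinguished basis: this $\alpha$ carries no distinguished basis to a distinguished basis, although $M_{\alpha}$ has the prescribed form. (The example also defeats the purely membership-theoretic reading of the definition: the only $2$-dimensional abelian ideals between $\mathfrak{p}'$ and $\mathfrak{p}$ here are $\mathbb{K}e_1+\mathbb{K}e_3$ and $\mathbb{K}e_2+\mathbb{K}e_3$, and $\alpha$ maps neither onto such an ideal, so no image vector $\alpha(f_2)$ can lie in an adapted flag.)

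The conclusion to draw is that under the reading you adopt the ``if'' direction of (ii) is simply false, so your third paragraph cannot be repaired by a cleverer matching argument; what your attempt exposes is a looseness in the paper's definitions rather than a missing trick. The lemma, and the paper's one-line justification, are coherent precisely when a ``distinguished basis'' is taken in the bare linear-algebraic sense: a basis with $e_3$ spanning $\mathfrak{p}'$, $\{e_1,e_2,e_3\}$ spanning $\mathfrak{p}$ and $e_0\notin\mathfrak{p}$, i.e.\ adapted to the intrinsic partial flag $\mathfrak{p}'\subset\mathfrak{p}\subset\mathfrak{c}$ of Lemma \ref{semidirect}, with no condition tying the basis to $\mathfrak{b}$ or to the multiplication. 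Under that reading, a bijective linear map carries such bases to such bases if and only if it maps $(\mathfrak{p}',\mathfrak{p})$ onto $((\mathfrak{p}^{\ast})',\mathfrak{p}^{\ast})$, which is exactly the pattern \eqref{Amatrixaut4}; your first, second and fourth paragraphs then already constitute the entire proof, and this weaker notion is also all the paper uses later (an algebra isomorphism preserves $\mathfrak{p}$ and $\mathfrak{p}'$, hence is a weak isomorphism, and \eqref{isommatrixeq} then sorts out which weak isomorphisms are isomorphisms). So: state that reading explicitly, keep your flag argument, and delete the eigenvector-matching paragraph.
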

\begin{lem} The group $\Aut^\text{w}(\mathfrak{c})$ of weak automorphisms of an $\mathcal{BL}_4$-algebra $\mathfrak{c}$ acts simply transitively on the set of distinguished bases of $\mathfrak{c}$. The automorphism group $\Aut(\mathfrak{c})$ of $\mathfrak{c}$ is a subgroup of $\Aut^\text{w}(\mathfrak{c})$.
\end{lem}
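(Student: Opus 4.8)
The plan is to derive both assertions directly from the three preceding lemmas, with essentially no new computation. Before "acts simply transitively" is even meaningful, I would first check that $\Aut^{\text{w}}(\mathfrak{c})$ is a group under composition. The identity clearly sends distinguished bases to distinguished bases, and so does any composite of weak automorphisms, so the only point needing attention is closure under inversion. For this I would fix one distinguished basis $\mathcal{E}=\{e_0,e_1,e_2,e_3\}$ and use it as both source and target basis, so that by part (ii) of the preceding Lemma a linear self-map $\alpha$ is a weak automorphism exactly when its matrix has the form \eqref{Amatrixaut4}. The vanishing entries in \eqref{Amatrixaut4} say precisely that $\alpha$ leaves $\mathbb{K}e_3$ and $\mathfrak{p}=\mathbb{K}e_1+\mathbb{K}e_2+\mathbb{K}e_3$ invariant, and the determinant condition is invertibility; this set of invertible maps is closed under taking inverses, so $\alpha^{-1}$ again has the shape \eqref{Amatrixaut4} and is a weak automorphism.

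For the simple transitivity, the action $\alpha\cdot\{e_i\}=\{\alpha(e_i)\}$ is well defined because weak automorphisms carry distinguished bases to distinguished bases. Transitivity and freeness then follow immediately from part (i) of the preceding Lemma specialized to $\mathfrak{c}^{\ast}=\mathfrak{c}$: for any two distinguished bases there is a \emph{unique} weak automorphism carrying the first to the second. Existence yields transitivity, and uniqueness forces every stabilizer to be trivial, which together give simple transitivity.

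For the inclusion $\Aut(\mathfrak{c})\le\Aut^{\text{w}}(\mathfrak{c})$, I would show that every $\phi\in\Aut(\mathfrak{c})$ maps a distinguished basis $\{e_0,e_1,e_2,e_3\}$ to one, verifying the intrinsic conditions (a) and (b) of the characterization Lemma. Condition (a) is handled by Lemma \ref{semidirect}: since $\phi$ preserves $\mathfrak{p}$ and $\mathfrak{p}^{\prime}$, the images $\{\phi(e_1),\phi(e_2),\phi(e_3)\}$ form again a basis of $\mathfrak{p}$ with $\phi(e_3)\in\mathfrak{p}^{\prime}$, and $\phi(e_0)\notin\mathfrak{p}$. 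For condition (b) I would exploit the covariance of left multiplication under an algebra homomorphism, namely $L_{\phi(e_0)}=\phi\circ L_{e_0}\circ\phi^{-1}$, which follows from $L_{\phi(e_0)}(\phi(e_j))=\phi(e_0)\phi(e_j)=\phi(e_0e_j)=\phi(L_{e_0}(e_j))$. This identity shows that the matrix of $L_{\phi(e_0)}$ with respect to the basis $\{\phi(e_1),\phi(e_2),\phi(e_3)\}$ equals the matrix of $L_{e_0}$ with respect to $\{e_1,e_2,e_3\}$, hence also has the block form \eqref{x1234}. Thus $\{\phi(e_i)\}$ is distinguished, $\phi$ is a weak automorphism, and as $\Aut(\mathfrak{c})$ is closed under composition and inversion it is a subgroup of $\Aut^{\text{w}}(\mathfrak{c})$.

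I do not expect a genuine obstacle, since the statement is largely a repackaging of the earlier results. The one place that demands care is that weak isomorphisms need \emph{not} preserve the middle term $\mathfrak{b}$ of the adapted flag (the entries $q^1$ in \eqref{Amatrixaut4} may be nonzero), so in the third step I deliberately rely on the conjugation-stable characterization (b) of distinguished bases through the normal form of $L_0$, rather than attempting to track the image of $\mathfrak{b}$ under $\phi$.
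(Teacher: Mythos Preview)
Your argument is correct. The paper actually states this lemma without proof, treating both claims as immediate consequences of the preceding lemmas; your write-up makes those implicit steps explicit and does so soundly, in particular the clean conjugation identity $L_{\phi(e_0)}=\phi\circ L_{e_0}\circ\phi^{-1}$ to verify condition (b) for the image basis.
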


\section{Orbits of multiplications}

Let $\mathbb{K}^4$ be the $4$-dimensional vector space over the field $\mathbb{K}$  and $\mathcal{E} =\{e_0,e_1,e_2,e_3\}$ its canonical basis. A $\mathcal{BL}_4$-algebra on $\mathbb{K}^4$ with distinguished basis $\mathcal{E}$ is determined by the multiplication of the basis vectors, i.e. by the system of structure constants $c_{ij}^k$ in the equations $e_ie_j = \sum_{0}^{3}c_{ij}^ke_k$. These constants have to fulfill the equations
$$c_{ij}^k = -c_{ji}^k,\; c_{12}^3=\xi^3,\; c_{0j}^k = -c_{j0}^k = x_j^k,\quad j,k = 1,2,3,\;\text{with}\; \{x_j^k\}= \begin{bmatrix}
	x_{1}^1 & 0 & 0\\	
	0 & x_{2}^2 & x_3^2\\
    0 & x_2^3 & x_3^3
\end{bmatrix},$$
and the constants other than these are zero.
Conversely, for any triple $\{x_1^1,\xi^3,X\}$, $x_1^1,\xi^3 \in\mathbb{K}$, $X = \begin{bmatrix}
	x_{2}^2 & x_3^2\\
	x_2^3 & x_3^3
\end{bmatrix}\in\mbox{End}(\mathbb{K}^2)$, the system of structure constants satisfying $c_{ij}^k = -c_{ji}^k$, $c_{01}^1=x_1^1$, $c_{12}^3=\xi^3$, $c_{0j}^k = -c_{j0}^k = x_j^k$, $j,k = 2,3,$ determines an anti-commutative algebra, which will be denoted by $\mathfrak{c}(x_1^1,\xi^3,X)$.
\begin{lem} \label{bl4algebra} An anti-commutative algebra $\mathfrak{c}(x_1^1,\xi^3,X)$, $x_1^1,\xi^3\in\mathbb{K}$, $X\in\mbox{End}(\mathbb{K}^2)$ is a $\mathcal{BL}_4$-algebra if and only if one of the following conditions is satisfied:
	\begin{enumerate}
		\item $x_1^1\neq 0$ and the first row of $X$ is non-vanishing,
		\item $x_1^1=0$, the first row of $X$ is vanishing and the second row of $X$ is non-vanishing.
	\end{enumerate}
\end{lem}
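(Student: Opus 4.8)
The plan is to reduce the statement to condition (i) of Definition \ref{M4algebradef} by checking that condition (ii) holds automatically for every $\mathfrak{c}(x_1^1,\xi^3,X)$, and then to decide (i) from the two invariants $\mathfrak{c}'$ and the centre $Z(\mathfrak{c})$. First I would take the candidate flag $\mathfrak{p}'=\langle e_3\rangle\subset\mathfrak{b}=\langle e_2,e_3\rangle\subset\mathfrak{p}=\langle e_1,e_2,e_3\rangle$ together with $\mathfrak{a}=\langle e_0,e_1\rangle$. Straight from the structure constants, $\mathfrak{p}$ and $\mathfrak{b}$ are ideals, $\mathfrak{b}$ is abelian, $\mathfrak{a}$ is a two-dimensional (hence Lie) subalgebra with $e_0e_1=x_1^1e_1$, and the vector-space splitting $\mathfrak{c}=\mathfrak{a}\oplus\mathfrak{b}$ exhibits $\mathfrak{c}$ as a semidirect sum with $\mathfrak{p}'\subset\mathfrak{b}\subset\mathfrak{p}$. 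Since $e_1e_2=\xi^3e_3$ is the only internal product of $\mathfrak{p}$, one has $\mathfrak{p}\cong\mathfrak{n}$ exactly when $\xi^3\neq 0$, which I would record as a standing requirement. Thus (ii) holds whenever $\xi^3\neq 0$, and the whole statement collapses to deciding (i): non-decomposability together with $\mathfrak{c}'\in\{\mathfrak{p},\mathfrak{p}'\}$.

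Next I would compute the commutator ideal. As $e_3\in\mathfrak{c}'$ (because $\xi^3\neq 0$), one gets $e_1\in\mathfrak{c}'\Leftrightarrow x_1^1\neq 0$ and $e_2\in\mathfrak{c}'\pmod{e_3}\Leftrightarrow$ the first row of $X$ is non-vanishing. Hence $\mathfrak{c}'=\mathfrak{p}$ precisely under condition~1, while $\mathfrak{c}'=\mathfrak{p}'=\langle e_3\rangle$ precisely when $x_1^1=0$ and the first row of $X$ vanishes; in the remaining mixed configurations $\dim\mathfrak{c}'=2$, so $\mathfrak{c}'\notin\{\mathfrak{p},\mathfrak{p}'\}$ and (i) already fails. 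This isolates the two families named in the statement as the only candidates satisfying the $\mathfrak{c}'$-requirement.

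It remains to settle non-decomposability, and this is where I expect the real work to lie. I would use two elementary facts: a four-dimensional $\mathfrak{c}$ with $\dim\mathfrak{c}'\le 3$ splits off a one-dimensional ideal if and only if $Z(\mathfrak{c})\not\subseteq\mathfrak{c}'$ (any hyperplane through $\mathfrak{c}'$ missing a central vector is an ideal complement); and a $2+2$ splitting forces $\dim\mathfrak{c}'\le 2$, and if moreover $\dim\mathfrak{c}'=1$ then one two-dimensional factor is abelian and thus lies in $Z(\mathfrak{c})$. In the case $\mathfrak{c}'=\mathfrak{p}$ the relation $x_1^1\neq 0$ forces the $e_0$-coordinate of every central vector to vanish, so $Z(\mathfrak{c})\subseteq\mathfrak{p}=\mathfrak{c}'$, and as $\dim\mathfrak{c}'=3$ excludes a $2+2$ splitting, $\mathfrak{c}$ is non-decomposable with no further hypothesis, matching condition~1. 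The delicate case, which I regard as the main obstacle, is $\mathfrak{c}'=\mathfrak{p}'$, where $e_0$ maps $\mathfrak{p}$ into $\mathfrak{p}'$ through the second row of $X$: here I would solve the linear system defining $Z(\mathfrak{c})$ explicitly and determine exactly which non-vanishing hypothesis on the second row of $X$ forbids a central vector of the form $\xi^3e_0-x_2^3e_1$. The presence of such a vector would give $Z(\mathfrak{c})\not\subseteq\mathfrak{c}'$ and a decomposition $\mathfrak{c}\cong\mathbb{K}\oplus\mathfrak{n}$, whereas its absence, combined with the exclusion of $2+2$ splittings via the smallness of $Z(\mathfrak{c})$, yields non-decomposability. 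Pinning down this non-vanishing condition on the second row and checking it against both splitting types is the computational heart of the argument and the step I would carry out most carefully.
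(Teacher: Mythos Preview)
Your plan is sound and in fact considerably more careful than the paper's own argument. The paper's proof is very terse: it only determines for which $(x_1^1,X)$ one has $\mathfrak{c}'=\mathfrak{p}$ or $\mathfrak{c}'=\mathfrak{p}'$, and it neither verifies condition~(ii) of Definition~\ref{M4algebradef} nor discusses non-decomposability at all; in particular it never explains how the clause ``second row of $X$ non-vanishing'' arises. Your route---checking (ii) directly with the canonical flag, computing $\mathfrak{c}'$, and then deciding non-decomposability via $Z(\mathfrak{c})$ together with the $1{+}3$ and $2{+}2$ splitting criteria---is the correct way to make the argument complete.

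Your instinct that the case $\mathfrak{c}'=\mathfrak{p}'$ is the delicate one is exactly right, and carrying out the centre computation there will sharpen the statement. With $x_1^1=0$ and $x_2^2=x_3^2=0$ one finds that $\xi^3 e_0 - x_2^3 e_1$ is central if and only if $x_3^3=0$; when this happens $\mathfrak{c}=\mathbb K(\xi^3 e_0 - x_2^3 e_1)\oplus\mathfrak{p}\cong\mathbb K\oplus\mathfrak{n}$ decomposes. Conversely, $x_3^3\neq 0$ forces $Z(\mathfrak{c})=0$, ruling out both splitting types. Hence non-decomposability in case~2 is equivalent to $x_3^3\neq 0$, which is strictly stronger than ``second row non-vanishing''. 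The lemma as stated is slightly loose at this point; the paper's subsequent arguments (e.g.\ the proof of Proposition~\ref{diagonal}, which asserts $x_3^3\neq 0$ whenever $x_1^1=0$) tacitly use the sharper condition your computation produces.
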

\begin{proof} If $x_1^1\neq 0$ and the first row of $X$ is vanishing, then $\mathfrak{p}'\subsetneq \mathfrak{c}'\subsetneq \mathfrak{p}$ and hence $\mathfrak{c}(x_1^1,\xi^3,X)$ is not a $\mathcal{BL}_4$-algebra. If $x_1^1\neq 0$ and the first row of $X$ is non-vanishing, then $L_{e_0}\mathfrak{p}+\mathfrak{p}' = \mathfrak{p}$, hence $\mathfrak{c}' = \mathfrak{p}$. For $x_1^1 = 0$ one has $\mathfrak{c}'\subsetneq \mathfrak{p}$, consequently $\mathfrak{c}(x_1^1,\xi^3,X)$ is a $\mathcal{BL}_4$-algebra if and only if $ \mathfrak{c}'= \mathfrak{p}'$, or equivalently, first row of $X$ is vanishing.
\end{proof}
Clearly, the set of distinguished bases and the group  $\Aut^\text{w}(\mathcal{E})$ of weak-automorphisms of a $\mathcal{BL}_4$-algebra is independent of the parameters $\{x_1^1,\xi^3,X\}$, it is related only to the sequence of subspaces $\mathfrak{c}\supset \mathfrak{p}\supset\mathfrak{b}\supset\mathfrak{p}'$.\\
Let $\{c_{ij}^k(X)\}$ be the system of structure constants of $\mathcal{BL}_4(X)$ and denote $\gamma(\mathcal{E})$ the set of systems of structure constants of multiplications of $\mathcal{BL}_4$-algebras $\mathcal{BL}_4(X)\in\mathcal{BL}_4(\mathcal{E})$. The group $\Aut^\text{w}(\mathcal{E})$ acts on $\gamma(\mathcal{E})$ by
\begin{equation}\label{actionmu4}\gamma_X(x, y) \mapsto M_{\alpha}^{-1}\gamma_X(M_{\alpha} x,M_{\alpha} y),\quad M_{\alpha} \in\Aut^\text{w}(\mathcal{E}),\quad x, y\in\mathbb{K}^4,\end{equation}
where $\gamma_X$ denotes the multiplication on $\mathbb{K}^4$ determined by the  system $\{c_{ij}^k(X)\}$ of structure constants.
\begin{lem}
	Consider the set $\Gamma$ of all possible $\mathcal{BL}_4$-algebra multiplications $\gamma$ on $\mathbb{K}^4$ having the canonical basis $\mathcal{E} =\{e_0,e_1,e_2,e_3\}$ as a distinguished basis. The
	orbits of the action \eqref{actionmu4} of the group $\Aut^\text{w}(\mathcal{E})$ on $\Gamma$ correspond to isomorphism classes of 4-dimensional $\mathcal{BL}_4$-algebras.
\end{lem}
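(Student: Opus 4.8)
The plan is to show the two implications of the bijective correspondence separately: first that weakly-equivalent (i.e. same-orbit) multiplications give isomorphic algebras, and second that isomorphic $\mathcal{BL}_4$-algebras correspond to multiplications lying in a single orbit. The conceptual engine is the passage between bases and coordinate algebras described in the preliminaries, together with Lemma~\ref{semidirect} and the characterization of distinguished bases.

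First I would unwind the action \eqref{actionmu4}. For a fixed weak automorphism with matrix $M_\alpha\in\Aut^\text{w}(\mathcal{E})$, the formula $\gamma_X\mapsto M_\alpha^{-1}\gamma_X(M_\alpha\,\cdot\,,M_\alpha\,\cdot\,)$ is precisely the transport of the multiplication $\gamma_X$ by the linear map $M_\alpha^{-1}$ acting as a change of basis. Concretely, if $\gamma_X$ is the structure on $\mathbb K^4$ and $\hat\gamma$ is its image under $M_\alpha$, then $M_\alpha:\mathbb K^4(\hat\gamma)\to\mathbb K^4(\gamma_X)$ is an algebra isomorphism by construction, since $M_\alpha(x\,\hat\cdot\,y)=M_\alpha x\,\cdot\,M_\alpha y$ unwinds to the defining equation. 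Hence any two multiplications in the same orbit define isomorphic $\mathcal{BL}_4$-algebras; this is the easy direction and follows formally from the coordinate-algebra discussion preceding the Gainov theorem.

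For the converse, suppose $\gamma_X$ and $\gamma_{X'}$ in $\Gamma$ define isomorphic $\mathcal{BL}_4$-algebras $\mathfrak c,\mathfrak c^\ast$, and let $\Phi:\mathfrak c\to\mathfrak c^\ast$ be an algebra isomorphism. I would argue that $\Phi$ carries a distinguished basis of $\mathfrak c$ to a distinguished basis of $\mathfrak c^\ast$. This is exactly where Lemma~\ref{semidirect} is essential: an isomorphism maps $\mathfrak c'$ to $(\mathfrak c^\ast)'$ and, since $\mathfrak p$ and $\mathfrak p'$ are preserved by every automorphism, are recoverable from the algebra structure alone, the flag $\mathfrak p'\subset\mathfrak b\subset\mathfrak p\subset\mathfrak c$ is sent by $\Phi$ to a corresponding adapted flag in $\mathfrak c^\ast$. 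Applying the characterization of distinguished bases (condition (a) on the nilpotent ideal $\mathfrak p$ together with the block form \eqref{x1234} of $L_0$) shows that the image under $\Phi$ of a distinguished basis is again distinguished. Since both algebras are modelled on $\mathbb K^4$ with canonical distinguished basis $\mathcal{E}$, expressing $\Phi$ in these coordinates yields a matrix $M$ that maps the distinguished basis $\mathcal E$ to another distinguished basis of $\mathbb K^4(\gamma_{X'})$; by Lemma on weak automorphisms this matrix lies in $\Aut^\text{w}(\mathcal E)$, and the relation $\Phi(x\cdot y)=\Phi x\cdot\Phi y$ translates into $\gamma_{X'}=M^{-1}\gamma_X(M\,\cdot\,,M\,\cdot\,)$, placing $\gamma_{X'}$ in the orbit of $\gamma_X$.

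The main obstacle is the middle step of the converse: verifying that an arbitrary algebra isomorphism, not assumed a priori to respect the flag, actually sends distinguished bases to distinguished bases. The worry is whether the adapted flag is truly determined by invariant algebraic data. Here the subtlety is that the definition of $\mathcal{BL}_4$-algebra allows either $\mathfrak c'=\mathfrak p$ or $\mathfrak c'=\mathfrak p'$; Lemma~\ref{semidirect} resolves both cases by showing $\mathfrak p=\hat{\mathfrak p}$ is characteristic in the second case, so that $\mathfrak p$ and $\mathfrak p'$ are always automorphism-invariant. What remains slightly delicate is the intermediate subspace $\mathfrak b$ and the subalgebra $\mathfrak a$: these are not unique, so I would phrase the argument as choosing compatible adapted flags on both sides rather than claiming $\Phi$ preserves a fixed one. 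Once this invariance of the relevant flag data is secured, the remaining verifications are the routine unwindings of the coordinate transport already indicated.
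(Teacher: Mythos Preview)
Your proposal is correct and follows essentially the same approach as the paper: the forward direction is the transport-of-structure triviality, and the converse uses Lemma~\ref{semidirect} to show that any algebra isomorphism preserves $\mathfrak p$ and $\mathfrak p'$, hence carries distinguished bases to distinguished bases, so that in coordinates it is a weak automorphism. The paper's proof is terser---it simply identifies $\mathfrak c^\ast$ with $\mathfrak c$ via the isomorphism and observes that both distinguished bases become distinguished in the single algebra---but your more explicit handling of the non-uniqueness of $\mathfrak b$ and $\mathfrak a$ is a welcome clarification of exactly the point the paper passes over in silence.
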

\begin{proof} Let $\{e_0,e_1,e_2,e_3\}$ and  $\{\hat{e}_0,\hat{e}_1,\hat{e}_2,\hat{e}_3\}$ be distinguished bases of the isomorphic $\mathcal{BL}_4$-algebras $\mathfrak{c}$ and ${\mathfrak{c}^{\ast}}$, respectively. We identify ${\mathfrak{c}^{\ast}}$ with $\mathfrak{c}$  with an isomorphism ${\mathfrak{c}^{\ast}}\to\mathfrak{c}$. Since an isomorphism preserves the subspaces $\mathfrak{p}$, $\mathfrak{p}^{\prime}$,  the identification yields that $\{\hat{e}_0,\hat{e}_1,\hat{e}_2,\hat{e}_3\}$ is also a distinguished basis of $\mathfrak{c}$. It follows that the linear map $\alpha$ defined by $\alpha(e_i) = \hat{e}_i$, $i = 0,1,2,3$  is an isomorphism between the multiplications with respect to the bases  $\{e_0,e_1,e_2,e_3\}$ and  $\{\hat{e}_0,\hat{e}_1,\hat{e}_2,\hat{e}_3\}$, respectively.
\end{proof}
In the following we use the Einstein convention, that is for each pair of indices occuring once in an upper and once in a lower position in a term we do summation.
\begin{lem} \label{blxialgebra} Any $\mathcal{BL}_4$-algebra $\mathfrak{c}(x_1^1,\xi^3,X)$, $x_1^1,\xi^3\in\mathbb{K}$, $X\in\mbox{End}(\mathbb{K}^2)$ is isomorphic to a $\mathcal{BL}_4$-algebra $\mathfrak{c}(x_1^1,1,X)$.
\end{lem}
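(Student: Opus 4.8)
The plan is to realize the passage from $\xi^3$ to $1$ as the action \eqref{actionmu4} of a single diagonal element of $\Aut^\text{w}(\mathcal{E})$, and then to invoke the Lemma identifying orbits of this action with isomorphism classes of $\mathcal{BL}_4$-algebras. The one structural fact that makes this work is that, since $\mathfrak{c}(x_1^1,\xi^3,X)$ is a $\mathcal{BL}_4$-algebra, the ideal $\mathfrak{p}=\langle e_1,e_2,e_3\rangle$ is isomorphic to the non-abelian algebra $\mathfrak{n}$, so $\mathfrak{p}'\neq\{0\}$. In a distinguished basis one has $\mathfrak{p}'=\mathbb{K}\,(e_1 e_2)=\mathbb{K}\,\xi^3 e_3$ together with $e_3\in\mathfrak{p}'$, which forces $\xi^3\neq 0$; this is the only place the $\mathcal{BL}_4$-hypothesis enters, and it is exactly what makes the rescaling below invertible.

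Next I would take the diagonal matrix $M_{\alpha}=\operatorname{diag}\!\left(1,\tfrac{1}{\xi^3},1,1\right)$, which has the admissible shape \eqref{Amatrixaut4} (with $u^0=q^2=r^3=1$, $p^1=1/\xi^3$ and all remaining entries zero, so that $|M_{\alpha}|=1/\xi^3\neq 0$) and therefore lies in $\Aut^\text{w}(\mathcal{E})$. I would then evaluate the transformed multiplication $\gamma^{*}(x,y)=M_{\alpha}^{-1}\gamma(M_{\alpha}x,M_{\alpha}y)$ on the pairs of basis vectors. The only product that changes is $\gamma^{*}(e_1,e_2)=\tfrac{1}{\xi^3}\,\xi^3\,e_3=e_3$; the products $\gamma^{*}(e_0,e_1)=x_1^1 e_1$ and $\gamma^{*}(e_0,e_2),\ \gamma^{*}(e_0,e_3)$ reproduce $x_1^1$ and the matrix $X$ unchanged, while all remaining products stay zero. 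Hence $\gamma^{*}$ is precisely the multiplication of $\mathfrak{c}(x_1^1,1,X)$, and by the orbit--isomorphism Lemma the two algebras are isomorphic.

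There is no genuine obstacle here: the claim amounts to rescaling the single generator $e_1$ by $1/\xi^3$, and the entire content is the observation that $\xi^3\neq 0$, which is forced by $\mathfrak{p}\cong\mathfrak{n}$. The verification of the transformed structure constants is the routine computation I have only sketched above.
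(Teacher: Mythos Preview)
Your argument is correct and follows essentially the same route as the paper: both proofs pick a diagonal element of $\Aut^{\mathrm w}(\mathcal E)$ and check that it carries the multiplication of $\mathfrak c(x_1^1,\xi^3,X)$ to that of $\mathfrak c(x_1^1,1,X)$. The only cosmetic difference is the choice of rescaling---you scale $e_1$ by $1/\xi^3$, whereas the paper scales in the $e_2,e_3$ directions---and your explicit justification that $\xi^3\neq 0$ (from $\mathfrak p'\neq 0$) makes the invertibility step cleaner than in the paper.
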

\begin{proof}
		According to \eqref{actionmu4} the structure constants
	$\{c_{ij}^k\}$ and $\{\hat{c}_{ij}^k\}$ determine isomorphic $\mathcal{BL}_4$-algebras if and only if there is a matrix $M_{\alpha}= \{a^i_{j}\}$ of the form \eqref{Amatrixaut4} satisfying
	\begin{equation}\label{cijklaij4} a_j^i\hat{c}_{kl}^j = c_{pq}^ia_k^pa_l^q, \quad i,j,k,l,p,q = 0,1,2,3.\end{equation}
Since $a_j^0 =u^0 \delta_j^0$, $c_{00}^i=\hat{c}_{00}^i =c_{kl}^0=\hat{c}_{kl}^0=0$, $a_0^i=u^i$, $i,j,k,l = 0,1,2,3$ the equations \eqref{cijklaij4} give only the relations
		\begin{equation}\label{cijxjk4}\begin{split} &a_3^i\hat{c}_{12}^3 =\delta_3^ir^3 \hat{\xi}^3 = c_{pq}^ia_1^pa_2^q=\delta_3^i \xi^3( p^1q^2-p^2q^1)\\
				 &a_j^i\hat{c}_{0l}^j = c_{pq}^iu^pa_l^q  = c_{0j}^iu^0a_l^j + c_{jh}^iu^ja_l^h,\end{split}\end{equation}
		where $i,j,h,l=1,2,3$, $p,q=0,1,2,3$. The second equations of (\ref{cijxjk4}) do not contain the parameter $\hat{\xi}^3$, hence we can get $\hat{\xi}^3=1$ with the replacement
			$r^3 = \xi^3( p^1q^2-p^2q^1)$. The matrix $M_{\alpha}=\begin{bmatrix}
			1 &0 & 0 & 0 \\
			0 & 1 & 0 & 0 \\
			0 & 0 & \xi^3( p^1q^2-p^2q^1) & 0 \\
			0 & 0 & 0 & \xi^3( p^1q^2-p^2q^1)
		\end{bmatrix}$ gives an isomorphism requisted in the assertion.
\end{proof}
This lemma shows that for the investigation of isomorphism classes of $\mathcal{BL}_4$-algebras it is sufficient to restrict our consideration to algebras $\mathfrak{c}(x_1^1,1,X)$, $\xi^3=1$,  $x_1^1\in\mathbb{K}$, $X\in\mbox{End}(\mathbb{K}^2)$. We will use the notation $\mathfrak{c}(x_1^1,X)=\mathfrak{c}(x_1^1,1,X)$.
We obtain that the matrix $M_{\alpha}$ of a weak isomorphism $\alpha:\mathfrak{c}\to\mathfrak{c}^{\ast}$ preserving the equation $e_1e_2=e_3$ has the form
\begin{equation}\label{Amatrixpaut} M_{\alpha} = \begin{bmatrix}
		u^0 &0 & 0 & 0 \\
		u^1 &p^1 & q^1 & 0 \\
		u^2 &p^2 & q^2 & 0 \\
		u^3 &p^3 & q^3 & p^1q^2-p^2q^1
	\end{bmatrix},\; p^1q^2-p^2q^1 \neq 0,\; 0\neq u^0, u^i, p^i, q^i \in\mathbb{K},\; i = 1,2,3.\end{equation}
\begin{theor}  Let $\{e_0,e_1,e_2,e_3\}$ and  $\{\hat{e}_0,\hat{e}_1,\hat{e}_2,\hat{e}_3\}$ be distinguished bases of the $\mathcal{BL}_4$-algebras $\mathfrak{c}(x_1^1,X)$ and $\hat{\mathfrak{c}}(\hat{x}_1^1,\hat{X})$, respectively, satisfying
	$$e_0e_j = x_j^ke_k,\quad \hat{e}_0\hat{e}_j = \hat{x}_j^k\hat{e}_k,\quad j,k={ 2, 3}.$$
	The weak isomorphism $\alpha:e_i\mapsto \hat{e}_i$, $i=0,1,2,3$,  determined by the  matrix \eqref{Amatrixpaut} is an isomorphism if and only if
	\begin{equation}\label{isommatrixeq}\begin{split}&\begin{bmatrix}
			p^1\hat{x}_1^1 & q^1\hat{x}_2^2 & q^1\hat{x}_3^2\\
			p^2\hat{x}_1^1 & q^2\hat{x}_2^2 & q^2\hat{x}_3^2\\
			p^3\hat{x}_1^1 & q^3\hat{x}_2^2 +(p^1q^2-p^2q^1)\hat{x}_2^3& q^3\hat{x}_3^2 +(p^1q^2-p^2q^1)\hat{x}_3^3
		\end{bmatrix} =\\=&\begin{bmatrix}	
			u^0p^1x_1^1 & u^0q^1x_1^1 & 0\\
			u^0x_j^2p^j & u^0x_j^2q^j & u^0x_3^2(p^1q^2-p^2q^1)\\
			u^0x_j^3p^j+u^1p^2 - u^2p^1 &  u^0x_j^3q^j +u^1q^2 - u^2q^1 & u^0x_3^3(p^1q^2-p^2q^1)
		\end{bmatrix}.\end{split}\end{equation}
\end{theor}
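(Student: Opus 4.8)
The plan is to show that, once the relation $e_1e_2=e_3$ has been normalized as in Lemma \ref{blxialgebra} (so that $\xi^3=\hat\xi^3=1$), the isomorphism condition \eqref{cijklaij4} collapses to a $3\times 3$ matrix identity which is precisely \eqref{isommatrixeq}. I would start from the fact, established in the proof of Lemma \ref{blxialgebra}, that $\alpha$ is an algebra isomorphism if and only if its structure constants satisfy \eqref{cijklaij4}, and that these equations reduce to the two families \eqref{cijxjk4}. The first family $a^i_3\hat c^3_{12}=c^i_{pq}a^p_1a^q_2$ is, with $\hat\xi^3=1$, already guaranteed by the shape of \eqref{Amatrixpaut}: its only nontrivial instance $i=3$ reduces to the identity $p^1q^2-p^2q^1=p^1q^2-p^2q^1$ by virtue of the entry $r^3=p^1q^2-p^2q^1$, while for $i\neq 3$ both sides vanish. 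Hence no condition survives from it, and everything is decided by the second family $a^i_j\hat c^j_{0l}=c^i_{0j}u^0a^j_l+c^i_{jh}u^ja^h_l$, $l=1,2,3$.

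Before expanding, I would check that the pairs $(k,l)$ absent from \eqref{cijxjk4} impose nothing. For $(k,l)\in\{(1,3),(2,3)\}$ both sides of \eqref{cijklaij4} vanish: the left because $\hat e_1\hat e_3=\hat e_2\hat e_3=0$, the right because the third column of \eqref{Amatrixpaut} is $a^q_3=(p^1q^2-p^2q^1)\delta^q_3$ while $c^i_{p3}=0$ for $p=1,2,3$ and $a^0_1=a^0_2=0$. Similarly the $i=0$ row of the second family is the trivial identity $0=0$, since every product lies in $\mathfrak p$ and hence $\hat c^0_{0l}=0=c^0_{pq}$. Thus the isomorphism condition is carried entirely by the equations with $i,l\in\{1,2,3\}$.

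The heart of the proof is then to recognize these nine scalar equations as the matrix identity \eqref{isommatrixeq}. Writing $P$ for the lower right $3\times 3$ block $\{a^i_j\}_{i,j=1,2,3}$ of \eqref{Amatrixpaut} and substituting $\hat c^j_{0l}=\hat x^j_l$, the left-hand side $a^i_j\hat x^j_l$ is the $(i,l)$-entry of the product $P\{\hat x^j_l\}$, which is exactly the left matrix of \eqref{isommatrixeq}. On the right, the term $c^i_{0j}u^0a^j_l=u^0x^i_ja^j_l$ is the $(i,l)$-entry of $u^0\{x^i_j\}P$ and accounts for every entry of the right matrix of \eqref{isommatrixeq} save the corrections in the bottom row; those come from $c^i_{jh}u^ja^h_l$, whose only nonzero structure constants with $j,h\in\{1,2,3\}$ are $c^3_{12}=-c^3_{21}=1$, so this term equals $\delta^i_3(u^1a^2_l-u^2a^1_l)$, i.e. $u^1p^2-u^2p^1$ for $l=1$, $u^1q^2-u^2q^1$ for $l=2$, and $0$ for $l=3$. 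Reading these identifications backwards shows that \eqref{isommatrixeq} in turn returns the second family of \eqref{cijxjk4}, so that together with the automatically valid first family it is equivalent to \eqref{cijklaij4} and hence to $\alpha$ being an isomorphism.

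I expect the only error-prone point to be the cross term $c^i_{jh}u^ja^h_l$ arising from $e_1e_2=e_3$: one must respect the antisymmetry $c^3_{21}=-c^3_{12}$ and contract against the correct column of \eqref{Amatrixpaut}, for it is precisely this term that spoils the otherwise clean conjugation-type relation $P\{\hat x^j_l\}=u^0\{x^i_j\}P$ and produces the asymmetric entries $u^1p^2-u^2p^1$ and $u^1q^2-u^2q^1$ in the last row of \eqref{isommatrixeq}. The remainder is a direct substitution of the structure constants recorded in \S 4 into \eqref{cijklaij4}.
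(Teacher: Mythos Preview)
Your proposal is correct and follows essentially the same route as the paper: both reduce the isomorphism condition \eqref{cijklaij4} via \eqref{cijxjk4} to the single family $a^i_j\hat x^j_l = u^0 x^i_j a^j_l + \delta^i_3(u^1 a^2_l - u^2 a^1_l)$, $i,l=1,2,3$, and then recognize this as the $3\times 3$ matrix identity $P\{\hat x^j_l\}=u^0\{x^i_j\}P+(\text{third-row correction})$, which is \eqref{isommatrixeq}. Your write-up is in fact more explicit than the paper's in checking that the first family of \eqref{cijxjk4} is automatic for matrices of the form \eqref{Amatrixpaut} and that the cases $i=0$ and $(k,l)\in\{(1,3),(2,3)\}$ contribute nothing; the paper absorbs these into the phrase ``we obtain the assertion from the equations \eqref{aijxdelta}''.
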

\begin{proof}
	According to \eqref{cijxjk4} the structure constants
	$\{c_{ij}^k\}$ and $\{\hat{c}_{ij}^k\}$ determine isomorphic $\mathcal{BL}_4$-algebras if and only if there is a matrix $M_{\alpha}= \{a^i_{j}\}$ of the form \eqref{Amatrixaut4} satisfying
\[r^3 = p^1q^2-p^2q^1\]
and
		\begin{equation}\label{cijkxjk4} a_j^i\hat{c}_{0l}^j = a_j^i\hat{x}_{l}^j = c_{pq}^iu^pa_l^q  = c_{0j}^iu^0a_l^j + c_{jh}^iu^ja_l^h = x_{j}^iu^0a_l^j + \delta_3^i(u^1a_l^2 - u^2a_l^1), \nonumber \end{equation}
		where $i,j,h,l=1,2,3$, $p,q=0,1,2,3$. Equivalently,  we have
\begin{equation}\label{aijxdelta}
a_j^i\hat{x}_{l}^j = x_{j}^iu^0a_l^j + \delta_3^i(u^1a_l^2 - u^2a_l^1),\quad i,j,l=1,2,3.\end{equation}
Using the matrix blocks $\{a_j^i\}$, $\{x_j^i\}$, $\{\hat{x}_j^i\}$, $i,j,l=1,2,3$, of the form
$$\{a_j^i\} = \begin{bmatrix}
p^1 & q^1 & 0 \\
p^2 & q^2 & 0 \\
p^3 & q^3 & p^1q^2-p^2q^1
\end{bmatrix},\; \{x_j^i\} =\begin{bmatrix}
x_1^1 & 0 & 0 \\
0 & x_{2}^2 & x_{3}^2\\
0 & x_{2}^3 & x_{3}^3
\end{bmatrix},\; \{\hat{x}_j^i\} =\begin{bmatrix}
\hat{x}_1^1 & 0 & 0 \\
0 & \hat{x}_{2}^2 & \hat{x}_{3}^2\\
0 & \hat{x}_{2}^3 & \hat{x}_{3}^3
\end{bmatrix}$$
we obtain the assertion from the equations \eqref{aijxdelta}.
\end{proof}
\begin{co} The $\mathcal{BL}_4$-algebras $\mathfrak{c}(x_1^1,X)$ and $\hat{\mathfrak{c}}(\hat{x}_1^1,\hat{X})$ are isomorphic if and only if there is a weak isomorphism with matrix \eqref{Amatrixpaut} satisfying the equation \eqref{isommatrixeq}.
\end{co}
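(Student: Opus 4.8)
The plan is to read off the Corollary from the preceding Theorem, the one extra ingredient being that every algebra isomorphism between $\mathcal{BL}_4$-algebras is automatically a weak isomorphism of the normalized form \eqref{Amatrixpaut}. I would prove the two implications separately.

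The direction ``if'' is immediate: if a weak isomorphism $\alpha$ with matrix of the form \eqref{Amatrixpaut} satisfies \eqref{isommatrixeq}, then by the Theorem $\alpha$ is an isomorphism of algebras, so $\mathfrak{c}(x_1^1,X)$ and $\hat{\mathfrak{c}}(\hat{x}_1^1,\hat{X})$ are isomorphic.

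For the converse, let $\varphi\colon\mathfrak{c}(x_1^1,X)\to\hat{\mathfrak{c}}(\hat{x}_1^1,\hat{X})$ be any isomorphism and fix a distinguished basis $\{e_0,e_1,e_2,e_3\}$ of the source, with adapted flag $\mathfrak{p}'\subset\mathfrak{b}\subset\mathfrak{p}\subset\mathfrak{c}$. I would first argue that $\{\varphi(e_0),\dots,\varphi(e_3)\}$ is again a distinguished basis. By the argument of Lemma \ref{semidirect} the subspaces $\mathfrak{p}$ and $\mathfrak{p}'$ are intrinsic (they equal $\mathfrak{c}'$, or are recovered from $\mathfrak{c}'$ exactly as in that lemma), so $\varphi(\mathfrak{p})=\hat{\mathfrak{p}}$ and $\varphi(\mathfrak{p}')=\hat{\mathfrak{p}}'$; moreover an isomorphism carries the subalgebra $\mathfrak{a}$ to a subalgebra and the abelian ideal $\mathfrak{b}$ to an abelian ideal $\varphi(\mathfrak{b})$ with $\hat{\mathfrak{p}}'\subset\varphi(\mathfrak{b})\subset\hat{\mathfrak{p}}$. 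Hence $\varphi(\mathfrak{b})$ is an admissible $\hat{\mathfrak{b}}$, the images satisfy the incidence conditions of a distinguished basis, and $\varphi$ is a weak isomorphism; its matrix therefore has the shape \eqref{Amatrixaut4}.

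It then remains to fix the normalization. Since both algebras are written in the reduced form with $\xi^3=\hat{\xi}^3=1$ (Lemma \ref{blxialgebra}), the first line of the isomorphism equations \eqref{cijxjk4} forces $r^3=p^1q^2-p^2q^1$, so the matrix of $\varphi$ is in fact of the form \eqref{Amatrixpaut}; and the remaining relations \eqref{aijxdelta}, shown in the Theorem to be equivalent to \eqref{isommatrixeq}, hold because $\varphi$ is an isomorphism. Thus $\varphi$ is a weak isomorphism of the form \eqref{Amatrixpaut} satisfying \eqref{isommatrixeq}, which completes the equivalence. The only point requiring genuine care, rather than a direct appeal to earlier results, is this last normalization step: checking that passing from the general weak-isomorphism form \eqref{Amatrixaut4} to the restricted form \eqref{Amatrixpaut} (equivalently, fixing $r^3=p^1q^2-p^2q^1$) loses no isomorphisms once $\xi^3=\hat{\xi}^3=1$. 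Everything else is supplied by Lemma \ref{semidirect}, Lemma \ref{blxialgebra}, and the Theorem.
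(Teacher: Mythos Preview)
Your argument is correct and matches the paper's intended route: the Corollary is stated there without proof, as an immediate consequence of Lemma~\ref{semidirect} (invariance of $\mathfrak{p}$ and $\mathfrak{p}'$), Lemma~4.2 (isomorphisms are weak isomorphisms), Lemma~\ref{blxialgebra} (the normalization $\xi^3=\hat{\xi}^3=1$ forces $r^3=p^1q^2-p^2q^1$), and the preceding Theorem. Your write-up simply makes explicit the verification that an isomorphism carries an adapted flag to an adapted flag, which the paper compresses into one sentence in the proof of Lemma~4.2.
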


\section{Normal forms}

In this section we prove that for any $\mathcal{BL}_4$-algebra the left translation $L_{e_0}$ can be reduced to a normal form by an isomorphism.
\begin{definition}
	We say that \begin{enumerate}
		\item a weak isomorphism between $\mathcal{BL}_4$-algebras determined by the matrix \eqref{Amatrixpaut} is
		\begin{enumerate} 	\item \emph{direct} if $p^1q^2\neq 0$ and $p^2=q^1 =0$,
		\item \emph{opposite} if $p^2q^1 \neq0$ and $p^1=q^2 = 0$,
		\end{enumerate}
		\item  the $\mathcal{BL}_4$-algebras $\mathfrak{c}(x_1^1,X)$ and $\hat{\mathfrak{c}}(\hat{x}_1^1,\hat{X})$ are \emph{directly}  \emph{(oppositely)} isomorphic if the equation \eqref{isommatrixeq} is satisfied by a direct (opposite) weak isomorphism.
	\end{enumerate}
\end{definition}
\begin{prop}\label{diagonal}
Any $\mathcal{BL}_4$-algebra $\mathfrak{c}$ with $L_{e_0} = \begin{bmatrix}
		x_1^1 & 0 & 0 \\
		0 & x_{2}^2 & 0\\
		0 & x_{2}^3 & x_{3}^3
	\end{bmatrix}$ is directly isomorphic to the $\mathcal{BL}_4$-algebra $\hat{\mathfrak{c}}$ with  $$L_{e_0} = \begin{bmatrix}
	1 & 0 & 0 \\
	0 & \frac{x_2^2}{x_1^1} & 0\\
	0 & 0 & \frac{x_{3}^3}{x_1^1}
\end{bmatrix},\; \text{if}\; x_1^1\neq 0\quad \text{or
}\quad L_{e_0} = \begin{bmatrix}
0 & 0 & 0 \\
0 & 0 & 0\\
0 & 0 & 1
\end{bmatrix},\; \text{if}\; x_1^1 = x_2^2 =0.$$
\end{prop}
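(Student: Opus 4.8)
The plan is to apply the matrix criterion \eqref{isommatrixeq} of the preceding theorem to a \emph{direct} weak isomorphism, i.e.\ I substitute $p^2 = q^1 = 0$ (so that $p^1q^2-p^2q^1 = p^1q^2 \neq 0$) into \eqref{Amatrixpaut} and then try to choose the remaining entries $u^0,u^1,u^2,p^1,q^2,p^3,q^3$ so that $(\hat{x}_1^1,\hat{X})$ becomes the asserted normal form. After this substitution the factors $q^1\hat{x}_2^2,\,q^1\hat{x}_3^2$ in the first row and $p^2\hat{x}_1^1$ in the second row of the left-hand side of \eqref{isommatrixeq} disappear, as do the matching products on the right, so most of the scalar equations become trivial and the system collapses to a few relations on the diagonal and on the third row. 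To simplify, I intend to take $p^1=q^2=1$ and $p^3=q^3=0$, so that the bottom-right entry $r^3 = p^1q^2-p^2q^1 = 1$ as well; only $u^0,u^1,u^2$ then remain to be pinned down.

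For the case $x_1^1\neq 0$ I would read off the $(1,1)$-entry first: it gives $p^1\hat{x}_1^1 = u^0p^1x_1^1$, hence $\hat{x}_1^1 = 1$ forces the scaling $u^0 = 1/x_1^1$. With $u^0$ fixed, the surviving diagonal entries $(2,2)$ and $(3,3)$ read $\hat{x}_2^2 = u^0x_2^2 = x_2^2/x_1^1$ and $\hat{x}_3^3 = u^0x_3^3 = x_3^3/x_1^1$, exactly the prescribed values, so they impose no further constraint. The two remaining third-row equations $(3,1)$ and $(3,2)$ reduce to $0 = -u^2p^1$ and $0 = u^0x_2^3q^2 + u^1q^2$, solved by $u^2 = 0$ and $u^1 = -x_2^3/x_1^1$; this last choice is precisely what annihilates the off-diagonal target entry $\hat{x}_2^3$. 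Since $|M_\alpha| = u^0(p^1q^2-p^2q^1)r^3 = 1/x_1^1 \neq 0$, the resulting matrix is an admissible direct weak isomorphism, hence by the corollary an isomorphism onto the stated normal form.

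For the case $x_1^1 = x_2^2 = 0$ the entire first and second rows of \eqref{isommatrixeq} vanish identically (using also $x_3^2 = 0$), and only the third row survives. Here the $(3,3)$-entry gives $(p^1q^2)\,\hat{x}_3^3 = u^0x_3^3(p^1q^2)$, so $\hat{x}_3^3 = 1$ forces $u^0x_3^3 = 1$; this fixes $u^0 = 1/x_3^3$ and exhibits $x_3^3\neq 0$ as the nonzero eigenvalue along which we normalise. With the same $p^1=q^2=1$, $p^3=q^3=0$, the equations $(3,1)$ and $(3,2)$ become $0 = -u^2$ and $0 = u^0x_2^3 + u^1$, solved by $u^2 = 0$ and $u^1 = -x_2^3/x_3^3$, so that $x_2^3$ is killed and $\hat{X} = \mathrm{diag}(0,1)$, as claimed; again $|M_\alpha| = u^0 \neq 0$.

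The only genuinely non-formal point, and the step I expect to require care, is the elimination of the off-diagonal coefficient $x_2^3$. It cannot be removed by conjugating the block $X$ by the diagonal matrix $\mathrm{diag}(p^1,q^2)$ alone; instead it is absorbed by the affine correction term $\delta_3^i(u^1a_l^2 - u^2a_l^1)$ in \eqref{aijxdelta}, which originates from the relation $e_1e_2 = e_3$ and acts only on the third row. This is exactly why the shift parameter $u^1$ must be chosen as $-x_2^3/x_1^1$ (resp.\ $-x_2^3/x_3^3$), and why a nonzero diagonal entry $x_1^1$ (resp.\ $x_3^3$) is indispensable: it furnishes the scaling $u^0$ that normalises the leading coefficient to $1$.
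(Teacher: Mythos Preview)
Your argument is correct and follows essentially the same route as the paper: specialise the matrix criterion \eqref{isommatrixeq} to a direct weak isomorphism with diagonal $A$-block, read off $u^0$ from the leading diagonal entry, and absorb $x_2^3$ into the choice of $u^1$. The explicit matrices you obtain coincide with the paper's.

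One point deserves a cleaner justification. In the case $x_1^1=x_2^2=0$ you write that the equation $u^0x_3^3=1$ ``exhibits $x_3^3\neq 0$'', but this is the wrong direction: you need $x_3^3\neq 0$ \emph{before} you can solve for $u^0$. The paper asserts $x_3^3\neq 0$ as a consequence of $\mathfrak{c}$ being a $\mathcal{BL}_4$-algebra; the underlying reason is non-decomposability. Indeed, if $x_1^1=x_2^2=x_3^2=x_3^3=0$ and $x_2^3\neq 0$, then $f_0:=e_0-x_2^3e_1$ satisfies $f_0\cdot e_i=0$ for all $i$, so $\mathfrak{c}=\mathbb{K}f_0\oplus\mathfrak{p}$ would be a direct sum of subalgebras, contradicting condition~(i) of Definition~\ref{M4algebradef}. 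Adding this one sentence closes the only gap.
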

\begin{proof}
Assume  $p^2=p^3 = q^1=q^3=u^2 =0$. Then we get from \eqref{isommatrixeq} the matrix equation		
		\[\begin{bmatrix}
			p^1\hat{x}_1^1 & 0 & 0\\
			0 & q^2\hat{x}_2^2 & 0\\
			0 & 0 & p^1q^2 \hat{x}_3^3\end{bmatrix} = \begin{bmatrix}	
			u^0p^1x_1^1 & 0 & 0\\
			0 & u^0x_2^2q^2 & 0\\
			0 &  u^0x_2^3q^2 + u^1q^2 & u^0x_3^3p^1q^2
		\end{bmatrix}.\]	
 If $x_1^1 =0$ then we have $\mathfrak{c}' = \mathfrak{p}'$, hence $x_2^2 =0$, $x_3^3\neq 0$. With the  matrices
\[ \text{if}\quad x_1^1 =0,\quad M_{\alpha}=\begin{bmatrix}
	\frac{1}{x_3^3} &0 & 0 & 0 \\
	-\frac{x_2^3}{x_3^3} & 1 & 0 & 0 \\
	0 & 0 & 1 & 0 \\
	0 & 0 & 0 & 1
\end{bmatrix}\quad\text{and if}\quad x_1^1\neq 0,
 \quad M_{\alpha}=\begin{bmatrix}
 	\frac{1}{x_1^1} &0 & 0 & 0 \\
 	-\frac{x_2^3}{x_1^1} & 1 & 0 & 0 \\
 	0 & 0 & 1 & 0 \\
 	0 & 0 & 0 & 1
 \end{bmatrix},\]
 we obtain $\hat{x}_1^1=\hat{x}_2^2=0$, $\hat{x}_3^3=1$, respectively, $\hat{x}_1^1= 1$, $\hat{x}_2^2=\frac{x_2^2}{x_1^1}$, $\hat{x}_3^3=\frac{x_{3}^3}{x_1^1}$.
Thus the assertion is proved.
\end{proof}
\begin{prop}\label{1diagonal}
Any $\mathcal{BL}_4$-algebra $\mathfrak{c}$ with $L_{e_0} = \begin{bmatrix}
		x_1^1 & 0 & 0 \\
		0 & x_2^2 & x_3^2 \\
		0 & x_2^3 & x_3^3
	\end{bmatrix}$, $x_1^1x_3^2 \neq 0$, is directly isomorphic to the $\mathcal{BL}_4$-algebra $\hat{\mathfrak{c}}$ determined by $L_{e_0} =
	\begin{bmatrix}
		1 & 0 & 0 \\
		0 & \frac{x_2^2+x_3^3}{x_1^1} & 1 \\	
		0 & 0 & 0
	\end{bmatrix}$.
\end{prop}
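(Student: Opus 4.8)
The plan is to exhibit a direct weak isomorphism explicitly and verify that it solves the matrix equation \eqref{isommatrixeq} with the prescribed target. Since the target algebra $\hat{\mathfrak{c}}$ is required to have $\hat{x}_1^1=1$, $\hat{x}_2^2=\frac{x_2^2+x_3^3}{x_1^1}$, $\hat{x}_3^2=1$ and $\hat{x}_2^3=\hat{x}_3^3=0$, I would substitute these values, together with the direct-type constraint $p^2=q^1=0$ (so that $p^1q^2-p^2q^1=p^1q^2$), into \eqref{isommatrixeq} and read off the scalar equations entry by entry. The hypothesis $x_1^1x_3^2\neq 0$ is precisely what makes the required divisions legitimate, and the matrix produced will be of the form \eqref{Amatrixpaut}.

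First I would solve the entries that are forced. The $(1,1)$-entry gives $\hat{x}_1^1=u^0x_1^1$, so the normalization $\hat{x}_1^1=1$ fixes $u^0=1/x_1^1$. After $p^2=0$, the first-column entries $(2,1)$ and $(3,1)$ reduce to $0=u^0x_3^2p^3$ and $0=-u^2p^1$; since $u^0$, $x_3^2$ and $p^1$ are all nonzero, these force $p^3=0$ and $u^2=0$. The $(2,3)$-entry reads $\hat{x}_3^2=u^0x_3^2p^1$, so demanding $\hat{x}_3^2=1$ determines $p^1=x_1^1/x_3^2\neq 0$; the $(3,3)$-entry reads $q^3=u^0x_3^3p^1q^2$, so demanding $\hat{x}_3^3=0$ fixes $q^3$ with $q^2$ still free (take $q^2=1$). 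Finally the $(3,2)$-entry, used to impose $\hat{x}_2^3=0$, determines $u^1$, while $u^3$ remains arbitrary. This yields a concrete $M_\alpha$ of direct type with $p^1q^2=x_1^1/x_3^2\neq 0$.

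The one nontrivial point, which I expect to be the main obstacle, is the consistency of the remaining $(2,2)$-entry. Once all the parameters above are pinned down, this entry computes $\hat{x}_2^2=u^0x_2^2+u^0x_3^2\cdot\frac{q^3}{q^2}$, and I must check that it equals exactly $\frac{x_2^2+x_3^3}{x_1^1}$. Substituting $\frac{q^3}{q^2}=u^0x_3^3p^1$ together with $u^0=1/x_1^1$ and $p^1=x_1^1/x_3^2$ makes the second term collapse to $\frac{x_3^3}{x_1^1}$, so that the off-diagonal datum $x_3^2$ cancels and the two contributions combine into the trace $\frac{x_2^2+x_3^3}{x_1^1}$. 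This cancellation is the heart of the statement; everything else is bookkeeping forced by the entries of \eqref{isommatrixeq}. (One also finds $u^1=\frac{x_2^2x_3^3-x_3^2x_2^3}{x_1^1x_3^2}$, though this value is not needed.) Since the constructed $M_\alpha$ satisfies \eqref{isommatrixeq} and is of direct type, it is a direct isomorphism by the isomorphism criterion established above, which proves the assertion.
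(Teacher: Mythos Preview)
Your proof is correct and follows essentially the same route as the paper: both substitute the direct-type constraint $p^2=q^1=0$ into \eqref{isommatrixeq} and solve for the entries of $M_\alpha$. The paper simply writes down the resulting matrix (with the inessential choice $q^2=x_3^2$ rather than your $q^2=1$) and asserts that substitution verifies the claim, whereas you derive each entry and explain the trace cancellation in the $(2,2)$-position; the content is the same.
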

\begin{proof} We assume $\hat{x}_1^1 = 1$, $\hat{x}_2^2 = \frac{x_2^2+x_3^3}{x_1^1}$, $\hat{x}_3^2 =1$. Hence the replacement of the matrix
	$M_{\alpha}=\begin{bmatrix}
		\frac{1}{x_1^1} &0 & 0 & 0 \\
		\frac{x_3^3x_2^2}{x_3^2x_1^1}-\frac{x_2^3}{x_1^1} & \frac{x_1^1}{x_3^2} & 0 & 0 \\
		0 & 0 & x_3^2 & 0 \\
		0 & 0 & x_3^3 & x_1^1
	\end{bmatrix}$
	into the equation \eqref{isommatrixeq} yields the assertion.  	
\end{proof}

\section{Isomorphisms}

\begin{prop}\label{eqdiagonal}
The $\mathcal{BL}_4$-algebras with $L_{e_0} = \begin{bmatrix}
		1 & 0 & 0 \\
		0 & x_{2}^2 & 0\\
		0 & 0 & x_{3}^3
	\end{bmatrix}$ and $L_{e_0} = \begin{bmatrix}
		1 & 0 & 0 \\
		0 & \hat{x}_{2}^2 & 0\\
		0 & 0 & \hat{x}_{3}^3
	\end{bmatrix}$ are isomorphic if and only if they are:
\begin{enumerate}
	\item[(a)] directly isomorphic and $\hat{x}_2^2=x_2^2, \; \hat{x}_3^3=x_3^3$,
    \item[(b)] oppositely isomorphic and  $\hat{x}_2^2 =\frac{1}{x_2^2}, \quad \hat{x}_3^3=\frac{x_3^3}{x_2^2}$.
\end{enumerate}
\end{prop}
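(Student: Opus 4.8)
The plan is to specialize the isomorphism criterion \eqref{isommatrixeq} to the case where both $L_{e_0}$ and $\hat L_{e_0}$ are diagonal, i.e. $x_1^1=\hat x_1^1=1$, $x_3^2=x_2^3=0$ and $\hat x_3^2=\hat x_2^3=0$, and then read off the constraints the matrix \eqref{Amatrixpaut} must satisfy. First I would substitute the diagonal data into \eqref{isommatrixeq} and sort the nine resulting scalar equations. The $(1,3)$ and $(2,3)$ entries become $0=0$; the four entries indexed by $i,l\in\{1,2\}$ collapse into the single matrix identity
$$P\,\mathrm{diag}(1,\hat x_2^2)=u^0\,\mathrm{diag}(1,x_2^2)\,P,\qquad P=\begin{bmatrix} p^1 & q^1 \\ p^2 & q^2\end{bmatrix};$$
and the $(3,3)$ entry, using $|P|=p^1q^2-p^2q^1\neq 0$, gives $\hat x_3^3=u^0x_3^3$. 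The remaining $(3,1)$ and $(3,2)$ entries only determine the auxiliary parameters $u^1,u^2,p^3,q^3$ and, as I expect, impose no further restriction on the diagonal data.

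The conjugacy relation above says that $\mathrm{diag}(1,\hat x_2^2)$ is similar, via the invertible $P$, to $u^0\,\mathrm{diag}(1,x_2^2)=\mathrm{diag}(u^0,u^0x_2^2)$. Since both matrices are diagonal, equality of their eigenvalue multisets forces $\{1,\hat x_2^2\}=\{u^0,u^0x_2^2\}$. (Note that $x_2^2\neq 0$ here: by Lemma \ref{bl4algebra} applied to a diagonal $\mathcal{BL}_4$-algebra with $x_1^1=1$ the first row of $X$ must be non-vanishing, so every division below is legitimate.) This yields exactly two alternatives: either $u^0=1$ and $\hat x_2^2=x_2^2$, whence $\hat x_3^3=x_3^3$; or $u^0=1/x_2^2$ and $\hat x_2^2=1/x_2^2$, whence $\hat x_3^3=x_3^3/x_2^2$. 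These are precisely the parameter conditions in \emph{(a)} and \emph{(b)}.

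It then remains to match each alternative with a direct, respectively opposite, isomorphism. For realization I would exhibit explicit matrices of the form \eqref{Amatrixpaut}: in the first alternative the identity matrix (a direct map, $p^1q^2=1$, $p^2=q^1=0$) solves \eqref{isommatrixeq} since the two algebras then have identical structure constants; in the second the anti-diagonal choice $p^1=q^2=0$, $p^2=q^1=1$, $u^0=1/x_2^2$ with all other free parameters zero (an opposite map) solves it, as one checks by inspection of the nine equations. Conversely, substituting the defining condition of a direct map ($p^2=q^1=0$) into the collapsed equations forces $u^0=1$ and $\hat x_2^2=x_2^2$, while an opposite map ($p^1=q^2=0$) forces $u^0x_2^2=1$ and $\hat x_2^2=u^0=1/x_2^2$; combined with $\hat x_3^3=u^0x_3^3$ these reproduce \emph{(a)} and \emph{(b)}, and the backward implication is immediate since direct and opposite isomorphisms are isomorphisms.

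The main obstacle I anticipate is bookkeeping rather than conceptual: correctly tracking which structure constants vanish when collapsing the nine scalar equations of \eqref{isommatrixeq}, and confirming that the bottom-row equations $(3,1)$ and $(3,2)$ are always solvable in $u^1,u^2,p^3,q^3$ so that they place no hidden constraint on $\hat x_2^2,\hat x_3^3$. Once the reduction to the conjugacy relation $P\,\mathrm{diag}(1,\hat x_2^2)=u^0\,\mathrm{diag}(1,x_2^2)\,P$ is secured, the eigenvalue argument together with the two explicit realizing matrices finishes the proof cleanly.
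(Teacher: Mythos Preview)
Your proposal is correct and follows essentially the same route as the paper: specialize \eqref{isommatrixeq} to the diagonal case, extract the parameter constraints, and exhibit explicit direct and opposite realizing matrices. The only difference is cosmetic---where the paper splits on $p^1q^2\neq 0$ versus $p^1q^2=0$ and reads off the conditions entry by entry, you package the upper $2\times 2$ block as the conjugacy relation $P\,\mathrm{diag}(1,\hat x_2^2)=u^0\,\mathrm{diag}(1,x_2^2)\,P$ and invoke equality of eigenvalue multisets to obtain the same two alternatives.
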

\begin{proof}
The matrix equation
\[\begin{split}&\begin{bmatrix}
		p^1 & q^1\hat{x}_2^2 & 0\\
		p^2 & q^2\hat{x}_2^2 & 0\\
		p^3 & q^3\hat{x}_2^2 & (p^1q^2-p^2q^1)\hat{x}_3^3
	\end{bmatrix} =\\=&\begin{bmatrix}	
		u^0p^1 & u^0q^1 & 0\\
		u^0x_2^2p^2 & u^0x_2^2q^2 & 0\\
		u^0x_3^3p^3+u^1p^2 - u^2p^1 & u^0x_3^3q^3 +u^1q^2 - u^2q^1 & u^0x_3^3(p^1q^2-p^2q^1)
	\end{bmatrix}\end{split}\]
gives the equations \begin{equation}\label{felsotri}\begin{split}
	(u^0-1)p^1=0,\; (u^0x_2^2-1)p^2=0,\;  (\hat{x}_2^2-u^0)q^1=0,\; (\hat{x}_2^2-u^0x_2^2)q^2=0,\;\; &u^0x_3^3 - \hat{x}_3^3 = 0\\
	(u^0x_3^3-1)p^3+u^1p^2 - u^2p^1=0,\quad (u^0x_3^3-\hat{x}_2^2)q^3 +u^1q^2 - u^2q^1=0.&
\end{split} \nonumber \end{equation}
 If $p^1q^2\neq 0$, we get with the matrix $M_{\alpha}=\begin{bmatrix}
 	1 & 0 & 0 & 0 \\
 	0 &	p^1 & 0 & 0 \\
 	0 &	0 & q^2 & 0 \\
 	0 &	0 & 0 & p^1q^2
 \end{bmatrix}$ a direct isomorphism giving $\hat{x}_2^2=x_2^2$, $\hat{x}_3^3=x_3^3$.\\
 If $p^1q^2= 0$, then $p^2q^1\neq 0$ and the matrix $M_{\alpha}=\begin{bmatrix}
	\frac{1}{x_2^2}& 0 & 0 & 0 \\
	0 &	0 & q^1 & 0 \\
	0 &	p^2 & 0 & 0 \\
	0 &	0 & 0 & -p^2q^1
\end{bmatrix}$ determines an opposite isomorphism between the algebras with $L_{e_0} = \begin{bmatrix}
	1 & 0 & 0 \\
	0 & x_{2}^2 & 0\\
	0 & 0 & x_{3}^3
\end{bmatrix}$ and $L_{e_0} = \begin{bmatrix}
	1 & 0 & 0 \\
	0 & \frac{1}{x_2^2} & 0\\
	0 & 0 & \frac{x_3^3}{x_2^2}
\end{bmatrix}$.
Hence the assertion is true.
\end{proof}
\begin{prop}\label{30diagonal}
	The $\mathcal{BL}_4$-algebras with $L_{e_0} = \begin{bmatrix}
		1 & 0 & 0 \\
		0 & x_{2}^2 & 1\\
		0 & 0 & 0
	\end{bmatrix}$ and $L_{e_0} = \begin{bmatrix}
		1 & 0 & 0 \\
		0 & \hat{x}_{2}^2 & 1\\
		0 & 0 & 0
	\end{bmatrix}$ are isomorphic if and only if they are directly isomorphic and $\hat{x}_2^2=x_2^2$.
\end{prop}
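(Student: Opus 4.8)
The plan is to substitute the two prescribed forms of $L_{e_0}$ directly into the master equation \eqref{isommatrixeq} and read off the constraints forced on the entries of a weak isomorphism \eqref{Amatrixpaut}. Here both the source and the target have $x_1^1=\hat x_1^1=1$, $x_3^2=\hat x_3^2=1$ and $x_2^3=x_3^3=\hat x_2^3=\hat x_3^3=0$, so the nine scalar equations collapse considerably. First I would evaluate the right-hand side of \eqref{isommatrixeq} with these values: the Einstein-summed entries reduce to $x_j^2p^j=x_2^2p^2+p^3$ and $x_j^3p^j=0$ (and likewise with $q$ in place of $p$), and the entries carrying $x_3^3$ vanish. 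Comparing entrywise with the left-hand side then produces a small, transparent system.

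The two decisive entries are the $(1,3)$ and $(3,3)$ positions, which read $q^1=0$ and $q^3=0$. Since every weak isomorphism obeys $p^1q^2-p^2q^1\neq 0$, the vanishing of $q^1$ forces $p^1q^2\neq 0$, hence $p^1\neq 0$ and $q^2\neq 0$. The $(1,1)$ entry $p^1=u^0p^1$ then gives $u^0=1$, and the $(2,3)$ entry $q^2=u^0(p^1q^2-p^2q^1)=p^1q^2$ gives $p^1=1$. Substituting these back, the $(2,2)$ entry becomes $q^2\hat x_2^2=x_2^2q^2$, and as $q^2\neq 0$ this yields $\hat x_2^2=x_2^2$. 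This establishes that an isomorphism between the two algebras can exist only when $\hat x_2^2=x_2^2$.

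Because $q^1=0$ is \emph{forced}, no solution can be an opposite weak isomorphism, since an opposite map requires $p^2q^1\neq 0$; thus, in contrast with Proposition \ref{eqdiagonal}, these algebras are never oppositely isomorphic. It then remains only to exhibit a direct isomorphism whenever $\hat x_2^2=x_2^2$. For this I would verify that the choice $u^0=p^1=q^2=1$ with all remaining parameters $p^2=p^3=q^1=q^3=u^1=u^2=u^3=0$ — which is direct, since $p^1q^2\neq 0$ and $p^2=q^1=0$ — satisfies all nine equations: the diagonal entries reduce to $1=1$, $\hat x_2^2=x_2^2$, $0=0$, and the off-diagonal entries reduce to $0=0$. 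Together with the trivial remark that a direct isomorphism is in particular an isomorphism, this gives the asserted equivalence.

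I expect no genuine obstacle, as the argument is a direct specialization of \eqref{isommatrixeq}; the only point needing care is the bookkeeping of the summed right-hand entries and pinning down $q^1,q^3,u^0,p^1$ in the correct order, so that the rank-one nilpotent shape of the block $X=\begin{bmatrix} x_2^2 & 1\\ 0 & 0\end{bmatrix}$ is clearly seen to eliminate the opposite case that was available in the diagonal setting.
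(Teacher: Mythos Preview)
Your proposal is correct and follows essentially the same approach as the paper: specialize equation \eqref{isommatrixeq} to the given $L_{e_0}$-matrices, read off $q^1=q^3=0$ from the third column, deduce $p^1q^2\neq 0$ and $u^0=1$, and conclude $\hat x_2^2=x_2^2$. Your write-up is in fact more complete than the paper's, since you also extract $p^1=1$ from the $(2,3)$ entry, explicitly rule out the opposite case, and exhibit the identity as a direct isomorphism for the converse direction, whereas the paper leaves those points implicit.
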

\begin{proof}
Considering the matrix equation
	\[\begin{split}\begin{bmatrix}
			p^1 & q^1\hat{x}_2^2 &  q^1\\
			p^2 & q^2\hat{x}_2^2 &  q^2\\
			p^3 & q^3\hat{x}_2^2 & q^3
		\end{bmatrix} =\begin{bmatrix}	
			u^0p^1 & u^0q^1 & 0\\
			u^0(x_2^2p^2+p_3) & u^0(x_2^2q^2+q_3) & u^0(p^1q^2-p^2q^1)\\
			u^1p^2 - u^2p^1 & u^1q^2 - u^2q^1 & 0
		\end{bmatrix}\end{split}\]
	we obtain $q^1=q^3=0$. Hence one has $p^1q^2\neq 0$.  Then $u^0 = 1$ and $\hat{x}_2^2= x_2^2$, proving the assertion.
\end{proof}

\section{Classification}

We will denote the $\mathcal{BL}_4$-algebras defined by the non-vanishing multiplication relations with respect to the canonical basis $\{e_0,e_1,e_2,e_3\}$ of $\mathbb K^4$ as follows:
 \begin{equation}\label{canonical}\begin{split}
	\mathfrak{a}_0:\quad &e_1 e_2=e_3, e_0e_3=e_3, \\
    \mathfrak{a}_1:\quad &e_1 e_2=e_3,  e_0e_1=e_1,\;  e_0e_2=- e_2,\\
	\mathfrak{b}(\lambda):\quad &e_1 e_2=e_3, e_0e_1=e_1, e_0e_2=\lambda e_2, e_0e_3=e_2,\\
	\mathfrak{c}(\lambda):\quad &e_1 e_2=e_3, e_0e_1=e_1, e_0e_2= e_2,  e_0e_3=\lambda e_3,\\
	\mathfrak{d}(\lambda,\mu):\quad &e_1 e_2=e_3, e_0e_1=e_1, e_0e_2=\mu e_2, e_0e_3=\lambda e_3, \; \mu\neq 1,\; \lambda^2+(\mu+1)^2\neq 0,
\end{split}	\end{equation}
where $\lambda, \mu \in \mathbb K$. If we omit the conditions $\mu\neq 1$, $\lambda^2+(\mu+1)^2\neq 0$, we can write $\mathfrak{a}_1=\mathfrak{d}(0,-1)$ and $\mathfrak{c}(\lambda)=\mathfrak{d}(\lambda,1)$.\\
Summarizing our previous considerations we have
\begin{prop}\label{algebralist}
 The direct isomorphism classes of $\mathcal{BL}_4$-algebras over a field $\mathbb K$ are bijectively represented by the algebras \eqref{canonical}.	
\end{prop}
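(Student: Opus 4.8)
The plan is to establish the two directions of the asserted bijection separately. For surjectivity---that every $\mathcal{BL}_4$-algebra is directly isomorphic to one of the algebras listed in \eqref{canonical}---I would start from an arbitrary $\mathcal{BL}_4$-algebra, normalised by Lemma~\ref{blxialgebra} to the form $\mathfrak c(x_1^1,X)$ with $\xi^3=1$, and run the case distinction of Lemma~\ref{bl4algebra}. When $x_1^1=0$ the first row of $X$ vanishes and the second is non-zero; here non-decomposability (Definition~\ref{M4algebradef}(i)) forces $x_3^3\neq 0$, for if $x_3^3=0$ and $x_2^3\neq 0$ then $e_0-x_2^3 e_1$ would be central and exhibit $\mathfrak c$ as the direct sum $\mathbb K\oplus\mathfrak p$. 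Proposition~\ref{diagonal} then reduces $L_{e_0}$ to $\mathrm{diag}(0,0,1)$, giving $\mathfrak a_0$. When $x_1^1\neq 0$ I would split on the entry $x_3^2$: if $x_3^2=0$ the matrix $L_{e_0}$ is lower triangular and Proposition~\ref{diagonal} diagonalises it to $\mathrm{diag}(1,x_2^2/x_1^1,x_3^3/x_1^1)$ (with $x_2^2/x_1^1\neq 0$, since the first row of $X$ is non-zero), which is one of $\mathfrak a_1,\mathfrak c(\lambda),\mathfrak d(\lambda,\mu)$; if $x_3^2\neq 0$ then $x_1^1x_3^2\neq 0$ and Proposition~\ref{1diagonal} brings $L_{e_0}$ to the form of $\mathfrak b(\lambda)$ with $\lambda=(x_2^2+x_3^3)/x_1^1$. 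This covers every $\mathcal{BL}_4$-algebra.

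For injectivity I would separate the listed families by direct-isomorphism invariants before appealing to the rigidity statements. The dimension of $\mathfrak c'$ isolates $\mathfrak a_0$ (where $\mathfrak c'=\mathfrak p'$ is one-dimensional) from all the others (where $\mathfrak c'=\mathfrak p$ is three-dimensional). Among the latter, since $\mathfrak p'$ is characteristic by Lemma~\ref{semidirect}, the property that $\mathfrak p'$ is an ideal of $\mathfrak c$ is an isomorphism invariant; it holds for the diagonal algebras $\mathfrak a_1,\mathfrak c(\lambda),\mathfrak d(\lambda,\mu)$, where $e_0e_3\in\mathfrak p'$, and fails for $\mathfrak b(\lambda)$, where $e_0e_3=e_2\notin\mathfrak p'$, so the $\mathfrak b$-family is disjoint from the diagonal families. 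Non-redundancy inside each family is then exactly the content of the rigidity results: Proposition~\ref{30diagonal} gives that $\mathfrak b(\lambda)\cong\mathfrak b(\hat\lambda)$ forces $\hat\lambda=\lambda$, while Proposition~\ref{eqdiagonal}(a) gives that a direct isomorphism between diagonal algebras fixes the parameters $(x_2^2,x_3^3)=(\mu,\lambda)$, so the assignment $(\mu,\lambda)\mapsto\{\mathfrak a_1,\mathfrak c(\lambda),\mathfrak d(\lambda,\mu)\}$ is injective.

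The main obstacle I anticipate is not a single deep step but the bookkeeping that reconciles the normal forms produced by the surjectivity argument with the parameter conventions of \eqref{canonical}. Concretely, one must verify that the diagonal normal forms $\mathrm{diag}(1,\mu,\lambda)$ arising in the case $x_1^1\neq 0$, $x_3^2=0$ are exactly those admissible under the $\mathcal{BL}_4$-conditions, and that the names $\mathfrak a_1=\mathfrak d(0,-1)$, $\mathfrak c(\lambda)=\mathfrak d(\lambda,1)$ together with the generic $\mathfrak d(\lambda,\mu)$ subject to $\mu\neq 1$ and $\lambda^2+(\mu+1)^2\neq 0$ partition this set without gaps or overlaps; the analogous check that the $\mathfrak b(\lambda)$-family is parametrised by all of $\mathbb K$ is routine. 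Once these ranges are pinned down, the statement follows by collecting the normal-form propositions (Propositions~\ref{diagonal} and~\ref{1diagonal}) and the rigidity propositions (Propositions~\ref{eqdiagonal} and~\ref{30diagonal}) already established.
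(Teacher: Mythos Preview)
Your proposal is correct and follows the paper's approach exactly: the paper's own ``proof'' is the single sentence ``Summarizing our previous considerations we have,'' and you carry out precisely that summary, invoking Propositions~\ref{diagonal} and~\ref{1diagonal} for surjectivity and Propositions~\ref{eqdiagonal}(a) and~\ref{30diagonal} for injectivity within each family. Your separation of the families by the invariants $\dim\mathfrak c'$ and ``$\mathfrak p'$ is an ideal'' is a clean way to handle the cross-family comparisons that the paper leaves entirely implicit, and your non-decomposability argument ruling out $x_3^3=0$ when $x_1^1=0$ actually makes explicit a point that the paper buries inside the proof of Proposition~\ref{diagonal} rather than stating in Lemma~\ref{bl4algebra}.
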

There are opposite isomorphisms between some directly non-isomorphic algebras $\mathfrak{d}(\lambda,\mu)$. Taking into account Proposition \ref{eqdiagonal} (b) we get the following classification:
\begin{theor}\label{izoalgebralist} The isomorphism classes of $\mathcal{BL}_4$-algebras over $\mathbb K$ are represented bijectively by the algebras $\mathfrak{a}_0$, $\mathfrak{a}_1$, $\mathfrak{b}(\lambda)$, $\mathfrak{c}(\lambda)$, $\lambda\in\mathbb K$, and by the pairs
	$$\{\mathfrak{d}(\lambda,\mu),\mathfrak{d}(\frac{\lambda}{\mu},\frac{1}{\mu})\}\quad \lambda,\mu\in\mathbb K,\; \mu\neq 1,\; \lambda^2+(\mu+1)^2\neq 0,$$
of opposite isomorphic algebras.
\end{theor}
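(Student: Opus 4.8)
The plan is to promote the \emph{direct} isomorphism classification of Proposition \ref{algebralist} to a full isomorphism classification. Since that proposition guarantees that every $\mathcal{BL}_4$-algebra is directly isomorphic to exactly one algebra on the list \eqref{canonical}, two $\mathcal{BL}_4$-algebras are isomorphic if and only if their canonical representatives are; hence it suffices to decide the isomorphism relation among the algebras \eqref{canonical} themselves, and then to record the resulting identifications.

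First I would split the list by invariants preserved under \emph{every} isomorphism. The subspaces $\mathfrak{p}$ and $\mathfrak{p}'$ are determined by intrinsic algebraic conditions (Lemma \ref{semidirect}), so any isomorphism carries $\mathfrak{p}'$ to $\mathfrak{p}'$; consequently both $\dim\mathfrak{c}'$ and the property ``$\mathfrak{p}'$ is an ideal of $\mathfrak{c}$'' are isomorphism invariants. Reading off the tables \eqref{canonical}, the algebra $\mathfrak{a}_0$ is the unique one with $\dim\mathfrak{c}'=1$ (there $\mathfrak{c}'=\mathfrak{p}'$), so it forms its own class; all other algebras have $\mathfrak{c}'=\mathfrak{p}$. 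Among these, the relation $e_0e_3=e_2\notin\mathfrak{p}'$ shows $\mathfrak{p}'$ is \emph{not} an ideal exactly for the algebras $\mathfrak{b}(\lambda)$, while $\mathfrak{p}'$ \emph{is} an ideal for the diagonal algebras $\mathfrak{a}_1,\mathfrak{c}(\lambda),\mathfrak{d}(\lambda,\mu)$. Hence the family $\{\mathfrak{b}(\lambda)\}$ and the diagonal family are each closed under isomorphism and mutually disjoint, and every isomorphism necessarily maps an algebra to another of the same normal shape.

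This reduction lets me apply the two isomorphism criteria already established within each shape. Proposition \ref{30diagonal} shows that algebras of type $\mathfrak{b}(\lambda)$ are isomorphic only directly and only for equal parameters, so the $\mathfrak{b}(\lambda)$ are pairwise non-isomorphic. For the diagonal family I would use Proposition \ref{eqdiagonal}: writing each diagonal algebra through $(x_2^2,x_3^3)=(\mu,\lambda)$, so that it equals $\mathfrak{d}(\lambda,\mu)$ (with $\mu\neq0$ automatic by Lemma \ref{bl4algebra}), a direct isomorphism forces equality of parameters, whereas an opposite isomorphism identifies $(\mu,\lambda)$ with $(1/\mu,\lambda/\mu)$, that is $\mathfrak{d}(\lambda,\mu)\cong\mathfrak{d}(\lambda/\mu,1/\mu)$.

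It then remains to do the parameter bookkeeping for the involution $\sigma:(\lambda,\mu)\mapsto(\lambda/\mu,1/\mu)$. I would check that $\sigma$ preserves the constraints $\mu\neq1$ and $\lambda^2+(\mu+1)^2\neq0$ and that $\sigma^2=\mathrm{id}$. Its fixed points satisfy $\mu^2=1$ and $\lambda(1-\mu)=0$; as $\mathrm{char}\,\mathbb K=0$ these are precisely the slice $\mu=1$, i.e. $\mathfrak{c}(\lambda)=\mathfrak{d}(\lambda,1)$, and the point $(\lambda,\mu)=(0,-1)$, i.e. $\mathfrak{a}_1$. Thus $\mathfrak{a}_1$ and each $\mathfrak{c}(\lambda)$ are fixed by the only available nontrivial (opposite) identification and so form singleton classes, while every remaining $\mathfrak{d}(\lambda,\mu)$ is genuinely paired with the distinct algebra $\mathfrak{d}(\lambda/\mu,1/\mu)$. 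Collecting the singletons $\mathfrak{a}_0,\mathfrak{a}_1,\mathfrak{b}(\lambda),\mathfrak{c}(\lambda)$ and the two-element classes $\{\mathfrak{d}(\lambda,\mu),\mathfrak{d}(\lambda/\mu,1/\mu)\}$ yields the stated classification. The main obstacle is the step of ensuring that Propositions \ref{eqdiagonal} and \ref{30diagonal}, which literally compare algebras of one fixed shape, are exhaustive; this is exactly what the invariant separation in the second paragraph supplies, since it rules out any cross-shape isomorphism.
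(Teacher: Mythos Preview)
Your proof is correct and follows essentially the same route as the paper, invoking Proposition~\ref{algebralist} together with Propositions~\ref{eqdiagonal} and~\ref{30diagonal} and then analysing the involution $(\lambda,\mu)\mapsto(\lambda/\mu,1/\mu)$ on the diagonal family. In fact you supply more than the paper does: the paper's argument is the single sentence ``taking into account Proposition~\ref{eqdiagonal}(b)'', whereas you add the invariant argument ($\dim\mathfrak{c}'$ and whether $\mathfrak{p}'$ is an ideal) that rules out cross-shape isomorphisms between the $\mathfrak{a}_0$, $\mathfrak{b}(\lambda)$, and diagonal families---a point the paper leaves implicit.
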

\begin{prop} \label{anticommutativealgebras}
The algebras $\mathfrak{a}_0$, $\mathfrak{a}_1$, $\mathfrak{b}(\lambda)$, $\mathfrak{c}(\lambda)$, $\mathfrak{d}(\lambda,\mu)$ have the following properties:
	 \begin{equation*}\begin{split}
	\mathfrak{a}_0:\quad &\text{non-Malcev binary Lie algebra}, \\
	\mathfrak{a}_1:\quad &\text{Lie algebra}, \\
	\mathfrak{b}(\lambda):\quad &\text{non-binary Lie algebra},\\
	\mathfrak{c}(\lambda):\quad &\text{Lie algebra},\hspace{100pt} \text{if}\quad\lambda=2,\\
	     &\text{non-Lie Malcev algebra},\hspace{42pt} \text{if}\quad\lambda=-1,\\
		 &\text{non-Malcev binary Lie algebra},\hspace{9pt} \text{if}\quad\lambda \neq \{-1,2\}  \\
    \{\mathfrak{d}(\lambda,\mu),\mathfrak{d}(\frac{\lambda}{\mu},\frac{1}{\mu})\}:\quad &\text{pair of Lie algebras},\hspace{63pt} \text{if}\quad \lambda=\mu + 1,\\
    & \text{pair of non-binary Lie algebras,}\hspace{10pt} \text{if}\quad \lambda\neq\mu + 1.
\end{split}\end{equation*}
\end{prop}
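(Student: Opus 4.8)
The plan is to verify each asserted property directly from the defining relations \eqref{canonical}, treating the three identities according to their different algebraic natures. First I would record that the Jacobian $\mathcal{J}(x,y,z)=xy\cdot z+zx\cdot y+yz\cdot x$ is trilinear and, by anti-commutativity, totally alternating (it is cyclically invariant by definition and changes sign under transposing two arguments). Hence the Lie property $\mathcal{J}\equiv 0$ reduces to the vanishing of $\mathcal{J}$ on the four triples of distinct basis vectors. Since every product among $\{e_1,e_2,e_3\}$ lies in $\mathbb K e_3$ and $e_3$ annihilates $\mathfrak{p}$, the triple $(e_1,e_2,e_3)$ always gives $\mathcal{J}=0$; the three remaining triples involve $e_0$, and a short computation with the general left multiplication \eqref{x1234} yields
\[\mathcal{J}(e_0,e_1,e_2)=(x_1^1+x_2^2-x_3^3)e_3-x_3^2e_2,\quad \mathcal{J}(e_0,e_1,e_3)=x_3^2e_3,\quad \mathcal{J}(e_0,e_2,e_3)=0.\]
Thus $\mathfrak{c}$ is a Lie algebra precisely when $x_3^2=0$ and $x_3^3=x_1^1+x_2^2$. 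Reading off the parameters of \eqref{canonical} I would settle every Lie-algebra claim at once: $\mathfrak{a}_1$ and $\mathfrak{c}(2)$ satisfy both conditions, $\mathfrak{d}(\lambda,\mu)$ satisfies them exactly when $\lambda=\mu+1$ (and the same computation applied to the partner $\mathfrak{d}(\frac{\lambda}{\mu},\frac{1}{\mu})$ gives the identical condition, so the pair is Lie simultaneously), while $\mathfrak{a}_0$, $\mathfrak{b}(\lambda)$ and $\mathfrak{c}(\lambda)$ with $\lambda\neq 2$ fail them.

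For the binary-Lie classification I would exploit that the identity \eqref{binarylieiden} is \emph{not} multilinear, which splits the work into a positive and a negative part. The algebras $\mathfrak{a}_0$ and $\mathfrak{c}(\lambda)$ coincide with the algebras of Gainov's list \eqref{gainov} (the second with $\mu=\lambda$), so I would simply invoke the Gainov theorem quoted in Section 2: both are binary Lie algebras, and that theorem moreover furnishes for $\mathfrak{c}(\lambda)$ the refinement Lie for $\lambda=2$, non-Lie Malcev for $\lambda=-1$, and non-Malcev binary Lie otherwise, which is exactly the stated list. To show the remaining algebras are \emph{not} binary Lie it suffices, by non-multilinearity, to exhibit a single violating pair. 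For $\mathfrak{b}(\lambda)$ the choice $x=e_0$, $y=e_1+e_3$ gives $xy=e_1+e_2$ and $\mathcal{J}(x,y,xy)=-e_2+\lambda e_3\neq 0$; for $\mathfrak{d}(\lambda,\mu)$ the choice $x=e_0$, $y=e_1+e_2$ gives $\mathcal{J}(x,y,xy)=(\mu-1)(\mu+1-\lambda)e_3$, which is nonzero since $\mu\neq 1$ and, in the non-Lie range, $\lambda\neq\mu+1$ (the same witness works for either member of the pair). Hence $\mathfrak{b}(\lambda)$ and the non-Lie pairs $\mathfrak{d}(\lambda,\mu)$ are non-binary Lie.

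Finally, the only property not delivered by Gainov's statement is that $\mathfrak{a}_0$ is non-Malcev, so here I would test the Malcev identity on $x=e_0$, $y=e_1$, $z=e_2$: the left-hand side $e_0e_1\cdot e_0e_2$ vanishes while the right-hand side reduces to $(yz\cdot x)x=e_3$, so the identity fails and $\mathfrak{a}_0$ is non-Malcev, completing the argument. The main obstacle is conceptual rather than computational: because the binary-Lie and Malcev identities are not multilinear, evaluating them on basis elements cannot establish them, so the \emph{positive} binary-Lie assertions for $\mathfrak{a}_0$ and $\mathfrak{c}(\lambda)$ must be inherited from Gainov's theorem rather than re-derived by a finite basis check, whereas all \emph{negative} assertions are disposed of by the single explicit witnesses above.
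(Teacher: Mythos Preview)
Your proof is correct and largely parallels the paper's: both invoke Gainov's theorem for $\mathfrak{a}_0$ and $\mathfrak{c}(\lambda)$, and both use the identical witness $x=e_0$, $y=e_1+e_3$ to show $\mathfrak{b}(\lambda)$ is not binary Lie. There are, however, two genuine differences. For the non-Lie pairs $\mathfrak{d}(\lambda,\mu)$ the paper does not produce a witness; it computes $\mathcal{J}(\mathfrak{d}(\lambda,\mu))=\mathbb K(\lambda-\mu-1)e_3$ and then argues indirectly that a non-Lie $\mathfrak{d}(\lambda,\mu)$ cannot be binary Lie because Gainov's theorem lists \emph{all} four-dimensional non-Lie binary Lie algebras and the classification just obtained shows $\mathfrak{d}(\lambda,\mu)$ is not among them. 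Your explicit pair $x=e_0$, $y=e_1+e_2$ giving $\mathcal{J}(x,y,xy)=(\mu-1)(\mu+1-\lambda)e_3$ is more self-contained and avoids leaning on the isomorphism classification. Second, you correctly observe that Gainov's theorem, as quoted, settles the Malcev question only for the one-parameter family $\mathfrak{c}(\lambda)$ and says nothing explicit about $\mathfrak{a}_0$; the paper simply cites Gainov here, whereas your Malcev witness $x=e_0$, $y=e_1$, $z=e_2$ closes that small gap directly. Your preliminary computation of $\mathcal{J}$ on basis triples is also a tidy way to handle all the Lie cases (including $\mathfrak{a}_1$ and the partner $\mathfrak{d}(\frac{\lambda}{\mu},\frac{1}{\mu})$) in one stroke, where the paper treats them piecemeal.
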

\begin{proof}
The assertion with respect to $\mathfrak{a}_0$ and $\mathfrak{c}(\lambda)$ are contained in Gainov's theorem (cf. \eqref{gainov}). Putting $x=e_0$, $y=e_1+e_3$ into the identity \eqref{binarylieiden} we get
\[\begin{split}((e_1+e_3)\cdot e_0(e_1+e_3))e_0+(e_0(e_1+e_3)\cdot e_0)(e_1+e_3)
	= -e_2+\lambda e_3 \neq 0, \end{split}\]
which means that the algebras $\mathfrak{b}(\lambda)$ are not binary Lie. For an algebra $\mathfrak{d}(\lambda,\mu)$ an easy computation shows that $\mathcal{J}(\mathfrak{d}(\lambda,\mu))= \mathbb K(\lambda - \mu -1)e_3$, hence it is a Lie algebra if and only if $\lambda=\mu + 1$. Otherwise the algebra is not binary Lie, since  all $4$-dimensional binary Lie algebras are among $\mathfrak{a}_0$ and $\mathfrak{c}(\lambda)$.
\end{proof}

\section{Groups which are candidates for automorphism groups}

In the following we consider block matrices of the form
\begin{equation}\label{fijeloles}\begin{split} &\mathcal{A}=\begin{bmatrix}
			A & 0 \\
			{\bf a}^t & |A|
		\end{bmatrix},\; A\in \text{GL}(2,\mathbb K),\; {\bf a}\in \mathbb K^2,\\
	&\begin{bmatrix}
	1 & 0 & 0 \\
	\phi(A,{\bf a}) &	A & 0 \\
	\alpha & {\bf a}^t & |A|
\end{bmatrix},\; A\in \text{GL}(2,\mathbb K),\; {\bf a}\in \mathbb K^2,\; \alpha\in\mathbb K,\; \phi: \text{GL}(2,\mathbb K)\times\mathbb K^2\to\mathbb K^2,\end{split} \nonumber \end{equation}
where the $3\times 3$ matrix $\mathcal{A}$ determines an automorphism of the ideal $\mathfrak{p}$ isomorphic to $\mathfrak{n}$  of the algebra $\mathfrak{c}$. According to the multiplication
$$\begin{bmatrix}
	1 & 0 & 0 \\
	\phi(A,{\bf a}) & A  & 0 \\
	\alpha &{\bf a}^t & |A|
\end{bmatrix}\cdot \begin{bmatrix}
	1 & 0 & 0 \\
	\phi(B,{\bf b}) &  B & 0 \\
	\beta &{\bf b}^t & |B|
\end{bmatrix} = \begin{bmatrix}
	1 & 0 & 0 \\
	\phi(A,{\bf a}) + A\phi(B,{\bf b}) &	AB & 0 \\
	\alpha + {\bf a}^t\phi(B,{\bf b}) + |A|\beta & {\bf a}^t B + |A|{\bf b}^t & |AB|
\end{bmatrix},$$
the set
$$\left\{\begin{bmatrix}
	1 & 0 & 0 \\
	\phi(A,{\bf a}) &	A & 0 \\
	\alpha & {\bf a}^t & |A|
\end{bmatrix},\; A\in \text{GL}(2,\mathbb K),\; {\bf a}\in \mathbb K^2,\; \alpha\in\mathbb K,\; \phi: \text{GL}(2,\mathbb K)\times\mathbb K^2\to\mathbb K^2\right\}$$
is a group, if and only if the map $\phi:\text{GL}(2,\mathbb K)\times\mathbb K^2\to\mathbb K^2$ satisfies the functional equation
\begin{equation}\label{funeq}
	\phi(A,{\bf a}) + A\phi(B,{\bf b}) = \phi (AB,({\bf a}^tB+ |A|{\bf b}^t)^t).
\end{equation}
If $\phi$ fulfills \eqref{funeq} then the obtained group is denoted by $\mathcal{G}_\phi(\mathbb K,\Aut(\mathfrak{n}))$.
Hence we have
\begin{lem} \label{csopbov}
The group $\mathcal{G}_\phi(\mathbb K,\Aut(\mathfrak{n}))$ is isomorphic to the group extension of $\Aut(\mathfrak{n})$ by the normal subgroup $\mathbb K$ defined on $\Aut(\mathfrak{n})\times \mathbb K$ by the  multiplication
\begin{equation}\label{groupext}(\mathcal{A},\alpha)\cdot(\mathcal{B},\beta)= (\mathcal{A}\mathcal{B},\alpha + {\bf a}^t\phi(B,{\bf b}) + |A|\beta).\end{equation}
\end{lem}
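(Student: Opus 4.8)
The plan is to exhibit an explicit isomorphism between $\mathcal{G}_\phi(\mathbb K,\Aut(\mathfrak{n}))$ and the group structure on $\Aut(\mathfrak{n})\times\mathbb K$ given by \eqref{groupext}, using the block description of $\Aut(\mathfrak{n})$ from \S 2. First I would define the map
\[
\Phi:\mathcal{G}_\phi(\mathbb K,\Aut(\mathfrak{n}))\to\Aut(\mathfrak{n})\times\mathbb K,\qquad
\begin{bmatrix}
	1 & 0 & 0 \\
	\phi(A,{\bf a}) & A & 0 \\
	\alpha & {\bf a}^t & |A|
\end{bmatrix}\mapsto\left(\begin{bmatrix} A & 0 \\ {\bf a}^t & |A| \end{bmatrix},\,\alpha\right),
\]
sending each matrix to the pair formed by its lower block $\mathcal{A}=\begin{bmatrix} A & 0 \\ {\bf a}^t & |A| \end{bmatrix}$, which represents an automorphism of $\mathfrak{p}\cong\mathfrak{n}$, together with the scalar $\alpha$. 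This map is a bijection: the block $\mathcal{A}$ determines $A$ and ${\bf a}$, hence also the first-column entry $\phi(A,{\bf a})$, so the whole matrix is recovered from $(\mathcal{A},\alpha)$, and conversely every pair arises in this way.

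Next I would verify that $\Phi$ is a homomorphism by comparing the two multiplications. The matrix product displayed just before the statement already computes the product of two elements of $\mathcal{G}_\phi$; its off-diagonal $\phi$-entry is $\phi(A,{\bf a})+A\phi(B,{\bf b})$, and the functional equation \eqref{funeq} rewrites this precisely as $\phi\bigl(AB,({\bf a}^tB+|A|{\bf b}^t)^t\bigr)$. This is the decisive step and the only one carrying real content, since it is exactly what guarantees that the product again belongs to $\mathcal{G}_\phi$, i.e.\ that its off-diagonal entry is genuinely the value of $\phi$ at the new block rather than an arbitrary vector. Reading off the remaining entries, and using the block multiplication rule of \S 2 to recognize the lower block as $\mathcal{A}\mathcal{B}$, the product corresponds under $\Phi$ to the pair $\bigl(\mathcal{A}\mathcal{B},\,\alpha+{\bf a}^t\phi(B,{\bf b})+|A|\beta\bigr)$, which is exactly the right-hand side of \eqref{groupext}. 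Hence $\Phi$ intertwines the two multiplications, so it is an isomorphism of groups and in particular \eqref{groupext} does define a group structure on $\Aut(\mathfrak{n})\times\mathbb K$.

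Finally I would identify the extension structure. Specializing $A$ to the identity and ${\bf a}=0$, and noting that \eqref{funeq} forces $\phi(A,{\bf a})=0$ in that case (take both arguments equal to the identity block, which gives $2\phi=\phi$), the corresponding pairs $(\mathcal{A},\alpha)$ with $\mathcal{A}$ the identity automorphism form a subgroup isomorphic to $(\mathbb K,+)$ by \eqref{groupext}. This subgroup is the kernel of the projection $\pi:(\mathcal{A},\alpha)\mapsto\mathcal{A}$ onto $\Aut(\mathfrak{n})$, hence normal, and $\pi$ is surjective and induces an isomorphism of the quotient onto $\Aut(\mathfrak{n})$; this is precisely the short exact sequence defining a group extension in \S 2. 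Composing with $\Phi$ exhibits $\mathcal{G}_\phi(\mathbb K,\Aut(\mathfrak{n}))$ as the asserted extension. I anticipate no genuine obstacle: the argument is mechanical once \eqref{funeq} is invoked, the single point deserving care being the closure verification via the functional equation described above.
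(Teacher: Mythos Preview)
Your proposal is correct and follows exactly the approach the paper intends: the paper gives no separate proof but simply writes ``Hence we have'' after the displayed block-multiplication formula, so the lemma is meant to be read off directly from that computation together with the functional equation \eqref{funeq}. Your write-up just spells out these details (bijectivity, the use of \eqref{funeq} for closure, and the identification of the normal subgroup $\mathbb K$) that the paper leaves implicit.
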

Note that the group extension given by (\ref{groupext}) is a semidirect product if and only if
${\bf a}^t\phi(B,{\bf b})$ is identically zero.

In the following we will identify the matrix group $\mathcal{G}_{\tau\phi}(\mathbb K,\Aut(\mathfrak{n}))$ with the corresponding group extension.
Assume that the map $\phi$ satisfies \eqref{funeq}, then for any $0\neq \tau\in\mathbb K$ also the map $\bar{\phi}_\tau:=\tau\phi$ satisfies \eqref{funeq}. Comparing the multiplication
$$(\mathcal{A},\alpha)\star(\mathcal{B},\beta)= (\mathcal{A}\mathcal{B},\alpha + {\bf a}^t\bar{\phi}_\tau(B,{\bf b}) + |A|\beta)= (\mathcal{A}\mathcal{B},\alpha + \tau{\bf a}^t\phi(B,{\bf b}) + |A|\beta)$$
of the group $\mathcal{G}_{\tau\phi}(\mathbb K,\Aut(\mathfrak{n}))$ with the multiplication \eqref{groupext} we obtain
\begin{lem} \label{lemiso} The map $\Psi:\mathcal{G}_\phi(\mathbb K,\Aut(\mathfrak{n})) \to \mathcal{G}_{\tau\phi}(\mathbb K,\Aut(\mathfrak{n}))$ defined by  $\Psi:(\mathcal{A},\alpha)\mapsto(\mathcal{A},\tau\alpha)$ is an isomorphism.
\end{lem}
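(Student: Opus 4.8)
The plan is to verify that $\Psi$ is a bijective group homomorphism between the two group structures on $\Aut(\mathfrak{n})\times\mathbb K$. Bijectivity is immediate: since $\tau\neq 0$, the map $\Psi$ is the identity on the $\Aut(\mathfrak{n})$-component and the invertible scaling $\alpha\mapsto\tau\alpha$ on the $\mathbb K$-component, with two-sided inverse $(\mathcal{A},\alpha)\mapsto(\mathcal{A},\tau^{-1}\alpha)$. Thus the only content is the compatibility of $\Psi$ with the two multiplications.

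First I would evaluate $\Psi\big((\mathcal{A},\alpha)\cdot(\mathcal{B},\beta)\big)$ using the product \eqref{groupext} of $\mathcal{G}_\phi(\mathbb K,\Aut(\mathfrak{n}))$; applying $\Psi$ and distributing $\tau$ over the three summands of the second component yields $\big(\mathcal{A}\mathcal{B},\,\tau\alpha + \tau\,{\bf a}^t\phi(B,{\bf b}) + \tau|A|\beta\big)$. Next I would evaluate the product $\Psi(\mathcal{A},\alpha)\star\Psi(\mathcal{B},\beta) = (\mathcal{A},\tau\alpha)\star(\mathcal{B},\tau\beta)$ in $\mathcal{G}_{\tau\phi}(\mathbb K,\Aut(\mathfrak{n}))$, whose multiplication $\star$ uses $\bar\phi_\tau=\tau\phi$ in the middle term; this gives $\big(\mathcal{A}\mathcal{B},\,\tau\alpha + \tau\,{\bf a}^t\phi(B,{\bf b}) + |A|\tau\beta\big)$. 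Comparing the two outcomes, the first components agree and the three summands of the second components coincide termwise, so $\Psi$ is a homomorphism and hence an isomorphism.

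There is no genuine obstacle here; the verification is essentially one-line bookkeeping. The only point deserving care is to track how each summand acquires exactly one factor of $\tau$: the $\alpha$- and $\beta$-terms pick up $\tau$ through the arguments $\Psi(\mathcal{A},\alpha)$ and $\Psi(\mathcal{B},\beta)$ on the $\star$-side (and through the outer application of $\Psi$ on the $\cdot$-side), whereas the mixed term ${\bf a}^t\phi(B,{\bf b})$ acquires its $\tau$ from $\bar\phi_\tau=\tau\phi$ on the $\star$-side (and again from the outer $\Psi$ on the $\cdot$-side). Since these two routes deposit precisely one $\tau$ on every summand, the two products agree.
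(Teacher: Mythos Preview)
Your proof is correct and follows exactly the paper's approach: the paper simply displays the $\star$-multiplication of $\mathcal{G}_{\tau\phi}(\mathbb K,\Aut(\mathfrak{n}))$ and states that comparing it with \eqref{groupext} yields the lemma, which is precisely the termwise comparison you carry out in detail.
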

We notice that in matrix notation for the groups $\mathcal{G}_\phi(\mathbb K,\Aut(\mathfrak{n}))$ and $\mathcal{G}_{\tau\phi}(\mathbb K,\Aut(\mathfrak{n}))$ the isomorphism $\Psi$ has the form
$$\begin{bmatrix}
	1 & 0 & 0 \\
	\phi(A,{\bf a}) &	A & 0 \\
	\alpha & {\bf a}^t & |A|
\end{bmatrix}\mapsto\begin{bmatrix}
1 & 0 & 0 \\
\tau\phi(A,{\bf a}) &	A & 0 \\
\tau\alpha & {\bf a}^t & |A|
\end{bmatrix}.$$
Assume that the
$2\times 2$-matrix $A$, the vectors ${\bf a}$, ${\bf 0}$, and the map $\phi: \text{GL}(2,\mathbb K)\times\mathbb K^2\to\mathbb K^2$ are given by
\begin{equation}\label{fifinulla} A =  \begin{bmatrix}
		p^1 & q^1 \\
		p^2 & q^2
	\end{bmatrix}, \; {\bf a} =  \begin{bmatrix}
		p^3 \\
		q^3
\end{bmatrix},\; {\bf 0} =  \begin{bmatrix}
	0\\
	0
\end{bmatrix},\; \phi(A,{\bf a}) = \begin{bmatrix}
		\frac{p^3 q^1- p^1q^3}{p^1q^2-p^2q^1} \\
		\frac{p^3 q^2- p^2q^3}{p^1q^2-p^2q^1} \\
	\end{bmatrix},\; p^i,q^i\in\mathbb K,\; i=1,2,3. \end{equation}
\begin{prop}
 The sets
	\begin{equation}\label{gammagroups} \begin{split} &\Gamma =\left\{ \begin{bmatrix}
		1 &{\bf 0}^t & 0 \\
		\phi(A,{\bf a}) & A &  {\bf 0} \\
		u & {\bf a}^t & |A|
	\end{bmatrix}; u\in\mathbb{K},\; A\in \text{GL}(2,\mathbb K),\; {\bf a}\in \mathbb K^2 \right\},\\
&\Gamma_0 =\left\{ \begin{bmatrix}
	1 &{\bf 0}^t & 0 \\
	{\bf 0} & A &  {\bf 0} \\
	u & {\bf a}^t & |A|
\end{bmatrix}; u\in\mathbb{K},\; A\in \text{GL}(2,\mathbb K),\; {\bf a}\in \mathbb K^2\right\},\\
&\Gamma_\triangle =\left\{ \begin{bmatrix}
	1 &{\bf 0}^t & 0 \\
	{\bf 0} & A &  {\bf 0} \\
	u & {\bf 0}^t & |A|
\end{bmatrix}\in\Gamma;\; \text{with}\; A =  \begin{bmatrix}
	1 & 0 \\
	p^2 & q^2
\end{bmatrix}\right\}\subset \Gamma,\\
&\Gamma^{(+)} =\left\{ \begin{bmatrix}
	1 &{\bf 0}^t & 0 \\
	\phi(A,{\bf a}) & A &  {\bf 0} \\
	u & {\bf a}^t & |A|
\end{bmatrix}\in\Gamma;\; \text{with}\; A =  \begin{bmatrix}
	p^1 & 0 \\
	0 & q^2
\end{bmatrix}\right\}\subset \Gamma,\\
&\Gamma^{(-)} =\left\{ \begin{bmatrix}
	1 &{\bf 0}^t & 0 \\
	\Theta \phi(A,{\bf a}) & A &  {\bf 0} \\
	u & {\bf a}^t & |A|
\end{bmatrix};\; \text{with}\; A =  \begin{bmatrix}
	p^1 & 0 \\
	0 & q^2
\end{bmatrix}\; \text{and}\;\Theta = \begin{bmatrix}
-1 & 0 \\
0 & 1
\end{bmatrix}\right\}\end{split} \nonumber \end{equation}
are pairwise non-isomorphic groups, which are group extensions of a subgroup of $$\Aut(\mathfrak{n})=\left\{ \begin{bmatrix}
	A &  {\bf 0} \\
	{\bf a}^t & |A|
\end{bmatrix}, A\in \text{GL}(2,\mathbb K),\; {\bf a}\in \mathbb K^2 \right\}$$ by the normal subgroup $\{u\in\mathbb K\}$, particularly, $\Gamma_0$ and $\Gamma_\triangle$ are semidirect products.
\end{prop}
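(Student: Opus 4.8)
The plan is to prove three things: that each of the five sets is a group, that each is an extension of a subgroup of $\Aut(\mathfrak{n})$ by the normal subgroup $\{u\in\mathbb{K}\}$ with $\Gamma_0,\Gamma_\triangle$ split, and that the five are pairwise non-isomorphic. For the group property the key observation is that the map in \eqref{fifinulla} can be rewritten as $\phi(A,{\bf a})=\tfrac{1}{|A|}AJ{\bf a}$ with $J=\bigl[\begin{smallmatrix}0&-1\\1&0\end{smallmatrix}\bigr]$, since $AJ=\bigl[\begin{smallmatrix}q^1&-p^1\\q^2&-p^2\end{smallmatrix}\bigr]$. Substituting this into \eqref{funeq} and cancelling the common factor $A$ and the scalar $|A|$, the functional equation collapses to the standard $2\times2$ identity $BJB^t=|B|\,J$; hence $\phi$ satisfies \eqref{funeq} and $\Gamma=\mathcal{G}_\phi(\mathbb{K},\Aut(\mathfrak{n}))$ is a group by Lemma \ref{csopbov}, while $\Gamma_0=\mathcal{G}_{\bf 0}(\mathbb{K},\Aut(\mathfrak{n}))$ is one trivially. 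I would then check that $\Gamma_\triangle$ and $\Gamma^{(+)}$ are closed under multiplication and inversion inside $\Gamma_0$ and $\Gamma$ respectively, because the diagonal (resp.\ lower unitriangular) form of the $2\times2$ block $A$ is preserved by multiplication; for $\Gamma^{(-)}$ one verifies that $\Theta\phi$ still satisfies \eqref{funeq} on diagonal blocks, which holds because $\Theta$ commutes with every diagonal $A$, so that $\Theta A\,\phi(B,{\bf b})=A\,\Theta\phi(B,{\bf b})$.

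Next I would exhibit the extension structure uniformly. Projecting a matrix onto its lower-right $3\times3$ block gives a homomorphism $\pi$ onto the indicated subgroup of $\Aut(\mathfrak{n})$ (all of $\Aut(\mathfrak{n})$ for $\Gamma,\Gamma_0$; the diagonal-$A$ subgroup for $\Gamma^{(\pm)}$; the subgroup with $A=\bigl[\begin{smallmatrix}1&0\\p^2&q^2\end{smallmatrix}\bigr]$, ${\bf a}={\bf 0}$ for $\Gamma_\triangle$), and its kernel is exactly $\{(I,{\bf 0},u):u\in\mathbb{K}\}\cong(\mathbb{K},+)$, which is normal; this proves the extension claim. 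For the splitting I would invoke the criterion stated after Lemma \ref{csopbov}: the extension \eqref{groupext} is a semidirect product precisely when the cocycle ${\bf a}^t\phi(B,{\bf b})$ vanishes identically. This holds for $\Gamma_0$ (where $\phi={\bf 0}$), with complement $\{(A,{\bf a},0)\}$, and for $\Gamma_\triangle$ (where moreover ${\bf a}={\bf 0}$), with complement $\{(A,{\bf 0},0)\}$; for the other three the cocycle is nonzero, so they are non-split.

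For non-isomorphism I would treat the five as connected linear algebraic groups and compare Levi data, which are isomorphism invariants. The homomorphism $(A,{\bf a},u)\mapsto A$ (or its diagonal/triangular restriction) identifies the reductive quotient by the unipotent radical as $\mathrm{GL}(2,\mathbb{K})$ for $\Gamma,\Gamma_0$, the torus $(\mathbb{K}^*)^2$ for $\Gamma^{(+)},\Gamma^{(-)}$, and $\mathbb{K}^*$ for $\Gamma_\triangle$; equivalently the groups have dimensions $7,7,5,5,3$, so the reductive quotient alone separates the three size-classes. In each case the unipotent radical is the characteristic subgroup $\{A=I\}$ (for $\Gamma_\triangle$, $\{A\ \text{unitriangular}\}$); restricting the multiplication to it, the $u$-coordinate picks up the cocycle ${\bf a}^t\phi(I,{\bf b})=a_2b_1-a_1b_2$ for $\Gamma$ and $\Gamma^{(+)}$, which is alternating and nondegenerate, so the radical is the Heisenberg group, whereas for $\Gamma_0$ it is abelian, for $\Gamma^{(-)}$ the twist $\Theta$ makes the cocycle the symmetric form $a_1b_2+a_2b_1$ (hence, in characteristic zero, an abelian radical), and for $\Gamma_\triangle$ it is abelian as well. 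Thus the pair (reductive quotient, isomorphism type of the unipotent radical) takes the five distinct values
\[\Gamma:(\mathrm{GL}_2,\mathrm{Heis}),\ \Gamma_0:(\mathrm{GL}_2,\mathrm{ab}),\ \Gamma^{(+)}:((\mathbb{K}^*)^2,\mathrm{Heis}),\ \Gamma^{(-)}:((\mathbb{K}^*)^2,\mathrm{ab}),\ \Gamma_\triangle:(\mathbb{K}^*,\mathrm{ab}),\]
proving pairwise non-isomorphy. The main obstacle is precisely the two equidimensional pairs $\Gamma$ versus $\Gamma_0$ and $\Gamma^{(+)}$ versus $\Gamma^{(-)}$, where dimension and reductive quotient coincide and one must detect the alternating-versus-symmetric nature of the restricted cocycle on the unipotent radical; every remaining pair is already separated by the reductive quotient.
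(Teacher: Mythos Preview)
Your argument is correct and follows the same strategy as the paper: verify the functional equation \eqref{funeq}, read off the extension structure via Lemma~\ref{csopbov}, separate the three size-classes by parameter (dimension) count, and within each equidimensional pair distinguish the groups by whether the normal unipotent kernel (your unipotent radical, the paper's $T^{(\pm)}$) is Heisenberg or abelian. Your treatment is in fact tidier than the paper's in two respects. First, the closed form $\phi(A,{\bf a})=|A|^{-1}AJ{\bf a}$ reduces \eqref{funeq} to the single $2\times 2$ identity $BJB^t=|B|\,J$, whereas the paper records only that ``an easy computation'' verifies \eqref{funeq}. Second, you carry the Heisenberg-versus-abelian dichotomy through uniformly, in particular handling $\Gamma$ versus $\Gamma_0$ explicitly; the paper's proof spells this computation out only for $\Gamma^{(+)}$ versus $\Gamma^{(-)}$ and does not separately address the seven-parameter pair. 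The Levi-decomposition language you use packages the invariants (reductive quotient, isomorphism type of the unipotent radical) more systematically, but the underlying computations are the same ones the paper performs.
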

\begin{proof}
An easy computation shows that the maps $\phi: \text{GL}(2,\mathbb K)\times\mathbb K^2\to\mathbb K^2$ and $\Theta\phi$, defined on the subset of $\text{GL}(2,\mathbb K)\times\mathbb K^2$ determined by $p^2=q^1=0$, satisfy the functional equation \eqref{funeq}, hence $\Gamma$, $\Gamma_0$, $\Gamma_\triangle$, $\Gamma^{(+)}$, $\Gamma^{(-)}$ are groups. The elements of the groups $\Gamma$ and $\Gamma_0$ depend on $7$ parameters, the elements of $\Gamma_\triangle$ on $3$ parameters and of $\Gamma^{(+)}$, $\Gamma^{(-)}$ on $5$ parameters.  According to Lemma \ref{csopbov} all the groups  $\Gamma$, $\Gamma_0$, $\Gamma_\triangle$, $\Gamma^{(+)}$, $\Gamma^{(-)}$ are group extensions of a subgroup of $\Aut(\mathfrak{n})$ by the normal subgroup $\{u\in\mathbb K\}$, particularly, $\Gamma_0$ and $\Gamma_\triangle$ are semidirect products.

The  groups $\Gamma^{(+)}$, $\Gamma^{(-)}$ are solvable groups. Let $E$ be the $2 \times 2$ identity matrix.  The kernels $$T^{(+)}= \left\{ \begin{bmatrix}
	1 &{\bf 0}^t & 0 \\
	\phi(E,{\bf a}) & E & {\bf 0} \\
	u & {\bf a}^t & 1
\end{bmatrix}\right\}\quad\text{and}\quad T^{(-)}= \left\{ \begin{bmatrix}
1 &{\bf 0}^t & 0 \\
\Theta\phi(E,{\bf a}) & E & {\bf 0} \\
u & {\bf a}^t & 1
\end{bmatrix}\right\},$$
respectively, of the homomorphism
onto the subgroup $U=\left\{ \begin{bmatrix}
	1 &{\bf 0}^t & 0 \\
	{\bf 0} & A & {\bf 0} \\
	0 & {\bf 0}^t & |A|
\end{bmatrix}\right\}$ of diagonal matrices are normal unipotent subgroups. We obtain the semidirect products $\Gamma^{(+)}=T^{(+)}\ltimes U$ and $\Gamma^{(-)}=T^{(-)}\ltimes U$, since any element can be written in the form $ut = t\cdot t^{-1}ut$, $u\in U$, $t\in T^{(+)}$ or $t\in T^{(-)}$, respectively,  where the kernel subgroup is acting on $U$ by conjugation $u\mapsto t^{-1}ut$. Applying Lemma \ref{csopbov} to the kernel subgroups, we obtain that the groups $T^{(+)}$ and $T^{(-)}$ are isomorphic to the group extensions of $\mathbb K^2$ by the normal subgroup $\mathbb K$ having the  multiplication
$$({\bf a},u)\cdot({\bf b},v)= ({\bf a + b},u + v + {\bf a}^t\phi(E,{\bf b})),\quad ({\bf a},u)\cdot({\bf b},v)= ({\bf a + b},u + v + {\bf a}^t\Theta\phi(E,{\bf b})),$$
respectively. If ${\bf a}^t = [p^3,q^3]$, ${\bf b}^t = [\hat{p}^3,\hat{q}^3]$ we get using \eqref{fifinulla} that
$${\bf a}^t\phi(E,{\bf b}) = [p^3,q^3]\begin{bmatrix}
	-\hat{q}^3\\
    \hat{p}^3\\
\end{bmatrix}= q^3\hat{p}^3-p^3\hat{q}^3,\quad {\bf a}^t\Theta\phi(E,{\bf b}) = [p^3,q^3]\begin{bmatrix}
\hat{q}^3\\
\hat{p}^3\\
\end{bmatrix}= q^3\hat{p}^3+p^3\hat{q}^3$$
It follows that the subgroup $T^{(+)}$ is non-commutative, but the subgroup  $T^{(-)}$ is commutative. Hence the groups $\Gamma^{(+)}$ and $\Gamma^{(-)}$ are non-isomorphic.
\end{proof}

\section{Classification of groups of automorphisms}\label{osztalyozas}

\begin{prop}\label{a331} The group of automorphisms of the algebra $\mathfrak{a}_0$ as well as of the algebras $\mathfrak{c}(\lambda)$, $1\neq\lambda\in\mathbb K$, is the group	
	$\Aut(\mathfrak{a}_0) =\Aut(\mathfrak{c}(\lambda)) =\Gamma$.\\ 
	Moreover, the group of automorphisms of the algebra $\mathfrak{c}(1)$ is the group	
	$\Aut(\mathfrak{c}(1)) = \Gamma_0$.  
\end{prop}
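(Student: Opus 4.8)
The plan is to realise $\Aut(\mathfrak{c})$ as the subgroup of the weak automorphism group $\Aut^{\text{w}}(\mathcal{E})$ consisting of exactly those matrices \eqref{Amatrixpaut} that also satisfy the isomorphism condition \eqref{isommatrixeq} in the self-isomorphism case $\hat{x}_j^k = x_j^k$. For each algebra I would substitute the entries of $L_{e_0}$ into \eqref{isommatrixeq} and read off the resulting constraints on the free parameters $u^0, u^1, u^2, u^3, p^i, q^i$ of $M_{\alpha}$. The expected pattern is that $u^0 = 1$ in every case, that the $2\times 2$ block $A$, the vector ${\bf a} = (p^3,q^3)^t$ and $u^3$ remain free, while $u^1, u^2$ are pinned down to a fixed scalar multiple of $\phi(A,{\bf a})$ from \eqref{fifinulla}.

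For $\mathfrak{a}_0$ the only nonzero entry of $L_{e_0}$ is $x_3^3 = 1$, i.e. $x_1^1 = x_2^2 = x_2^3 = x_3^2 = 0$. Substituting into \eqref{isommatrixeq}, I expect the top two rows to reduce to the trivial identity $0 = 0$ (so that no condition is placed on $A$), the $(3,3)$ entry to give $|A| = u^0|A|$ and hence $u^0 = 1$, and the $(3,1)$, $(3,2)$ entries to yield the linear system $u^1 p^2 - u^2 p^1 = -p^3$, $u^1 q^2 - u^2 q^1 = -q^3$ in the unknowns $u^1, u^2$. Its coefficient determinant equals $p^1 q^2 - p^2 q^1 = |A| \neq 0$, so Cramer's rule gives the unique solution $(u^1, u^2)^t = \phi(A, {\bf a})$. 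Thus the admissible matrices are exactly the elements listed in the definition of $\Gamma$ in \eqref{gammagroups}, proving $\Aut(\mathfrak{a}_0) = \Gamma$.

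For $\mathfrak{c}(\lambda)$ the entries are $x_1^1 = x_2^2 = 1$, $x_2^3 = x_3^2 = 0$, $x_3^3 = \lambda$. Here the top two rows of \eqref{isommatrixeq} give $(u^0 - 1)p^i = (u^0 - 1)q^i = 0$ for $i = 1, 2$, which forces $u^0 = 1$ since $A$ is invertible. The $(3,1)$, $(3,2)$ entries then read $u^1 p^2 - u^2 p^1 = (1 - \lambda)p^3$, $u^1 q^2 - u^2 q^1 = (1 - \lambda)q^3$, with unique solution $(u^1, u^2)^t = (\lambda - 1)\phi(A, {\bf a})$. For $\lambda = 1$ this degenerates to $u^1 = u^2 = 0$ with $A$, ${\bf a}$, $u^3$ arbitrary, which is precisely the parametrisation of $\Gamma_0$; hence $\Aut(\mathfrak{c}(1)) = \Gamma_0$. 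For $\lambda \neq 1$ the automorphism matrices form the group $\mathcal{G}_{(\lambda-1)\phi}(\mathbb K, \Aut(\mathfrak{n}))$.

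The only subtle point, which I would single out as the main (if minor) obstacle, is to reconcile the scaling factor $\lambda - 1$ with the target $\Gamma = \mathcal{G}_\phi(\mathbb K, \Aut(\mathfrak{n}))$. This is exactly what Lemma \ref{lemiso} supplies: taking $\tau = \lambda - 1 \neq 0$, the map $\Psi:(\mathcal{A}, \alpha) \mapsto (\mathcal{A}, \tau\alpha)$ is an isomorphism $\mathcal{G}_\phi \to \mathcal{G}_{\tau\phi}$, so under the identification of such matrix groups adopted above we obtain $\Aut(\mathfrak{c}(\lambda)) = \mathcal{G}_{(\lambda-1)\phi} = \Gamma$ for every $\lambda \neq 1$. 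Apart from this, the argument is the routine substitution and solution of two $2 \times 2$ linear systems, so the only real care needed is the bookkeeping of the summation indices in \eqref{isommatrixeq}.
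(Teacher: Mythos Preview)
Your proposal is correct and follows essentially the same approach as the paper: substitute the structure constants of $\mathfrak{a}_0$ and $\mathfrak{c}(\lambda)$ into \eqref{isommatrixeq}, deduce $u^0=1$ from the nonvanishing of $|A|$, solve the resulting $2\times 2$ linear system for $(u^1,u^2)$ to obtain the scalar multiple of $\phi(A,{\bf a})$, and finish with Lemma~\ref{lemiso} (the paper uses $\tau=(\lambda-1)^{-1}$, you use $\tau=\lambda-1$, which amounts to reading the same isomorphism in the opposite direction). The only cosmetic difference is that the paper records the solution first as $p^3,q^3$ in terms of $u^1,u^2$ and then inverts, whereas you apply Cramer's rule directly.
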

\begin{proof}
	In case of the algebra $\mathfrak{a}_0$ from the matrix equation 
	\[\begin{bmatrix}
		0 & 0 & 0 \\
		0 & 0 & 0 \\
		0 & 0& p^1 q^2-p^2 q^1
	\end{bmatrix} =\begin{bmatrix}	
		0 & 0 & 0\\
		0 & 0 & 0\\
		u^0 p^3+u^1p^2 - u^2p^1 & u^0 q^3 +u^1q^2 - u^2q^1 & u^0(p^1 q^2-p^2 q^1)
	\end{bmatrix} \]
	it follows that $u^0=1$ because of $p^1 q^2-p^2 q^1 \neq 0$. Hence we obtain that $p^3=u^2 p^1- u^1 p^2$ and 
	$q^3=u^2 q^1- u^1 q^2$ and equivalently  
	we get $u^1=\frac{p^3q^1-q^3p^1}{p^1q^2-p^2q^1}$, $u^2=\frac{p^3q^2-q^3p^2}{p^1q^2-p^2q^1}$. This proves the assertion for the algebra $\mathfrak{a}_0$. 
	
	The algebra $\mathfrak{c}(\lambda)$, $\lambda\in\mathbb K$, arises from the algebra $\mathfrak{d}(\lambda,\mu)$ with $\mu=1$. 
	Since $p^1 q^2-p^2 q^1 \neq 0$ from the matrix equation 
	\[\begin{split}& \begin{bmatrix}
			p^1 & q^1 & 0\\
			p^2 & q^2 & 0\\
			p^3 & q^3 & (p^1q^2-p^2q^1)\lambda
		\end{bmatrix}  =\\=& \begin{bmatrix}	
			u^0 p^1 & u^0 q^1 & 0\\
			u^0 p^2 & u^0 q^2 & 0\\
			u^0 \lambda p^3+u^1p^2 - u^2p^1 & u^0 \lambda q^3 +u^1q^2 - u^2q^1 & u^0 \lambda(p^1q^2-p^2q^1)
	\end{bmatrix} \end{split} \] 
	it follows that $u^0=1$. Furthermore, we obtain $p^3(\lambda -1)=u^2p^1- u^1p^2$, $q^3(\lambda - 1)=u^2q^1 - u^1q^2$ and equivalently $u^1=\frac{p^3q^1- p^1q^3}{p^1q^2-p^2q^1}(\lambda -1)$ and $u^2=\frac{p^3 q^2- p^2q^3}{p^1q^2-p^2q^1}(\lambda -1)$.  Therefore we receive  
	\begin{equation}\label{Amatrixaut6} \Aut(\mathfrak{c}(\lambda)) =\left\{ \begin{bmatrix}
			1 &{\bf 0}^t & 0 \\
			\phi(A,{\bf a})(\lambda -1) & A & {\bf 0} \\
			u^3 & {\bf a}^t & |A| 
		\end{bmatrix}, u^3\in\mathbb{K},\; A\in \text{GL}(2,\mathbb K),\; {\bf a}\in \mathbb K^2 \right\}.\end{equation} 
	Applying Lemma \ref{lemiso} with $\tau =(\lambda -1)^{-1}$ to the  group of automorphisms given by (\ref{Amatrixaut6}) we obtain the assertion for the algebras $\mathfrak{c}(\lambda)$, $1\neq\lambda\in\mathbb K$. Putting $\lambda = 1$ in (\ref{Amatrixaut6}) we have the assertion for the automorphism group of the algebra $\mathfrak{c}(1)$.  	
\end{proof}  
\begin{prop}\label{0felsotrian} Let be 
	$B(p^2, q^2 ) =  \begin{bmatrix}
		1 & 0 \\
		p^2 & q^2
	\end{bmatrix}$. 
	The group $\Aut(\mathfrak{b}(\lambda))$ of automorphisms of the algebras $\mathfrak{b}(\lambda)$, $\lambda\in\mathbb K$, 
	is	the group $\Aut(\mathfrak{b}(\lambda)) = \Gamma_\triangle$. 
\end{prop}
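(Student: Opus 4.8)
The plan is to proceed exactly as in the proof of Proposition~\ref{a331}: specialize the isomorphism equation \eqref{isommatrixeq} to the automorphism case $\hat{x}_j^k = x_j^k$ and solve it for the entries of the weak-automorphism matrix \eqref{Amatrixpaut}. From the multiplication table \eqref{canonical} of $\mathfrak{b}(\lambda)$ one reads off $x_1^1 = 1$, $x_2^2 = \lambda$, $x_3^2 = 1$ and $x_2^3 = x_3^3 = 0$, so that \eqref{isommatrixeq} becomes a concrete $3\times 3$ matrix identity; in its right-hand block the whole bottom row loses its $x_j^3$-contributions and the third column collapses to $\bigl(0,\,u^0(p^1q^2-p^2q^1),\,0\bigr)^t$.

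First I would read off the rigid part of this identity. The third column gives $q^1 = q^3 = 0$, whence $p^1q^2-p^2q^1 = p^1q^2$; the $(1,1)$-entry then forces $u^0 = 1$ (because $p^1\neq 0$) and the $(2,3)$-entry forces $p^1 = 1$; finally the $(3,2)$-entry yields $u^1 = 0$. The only surviving coupling comes from the entries $(2,1)$ and $(3,1)$, giving $p^3 = (1-\lambda)p^2$ and $u^2 = (\lambda-1)p^2$. Thus an automorphism is governed by the three free parameters $p^2$, $q^2\neq 0$, $u^3$, and in the canonical distinguished basis $\Aut(\mathfrak{b}(\lambda))$ is represented by the matrices
\[
\begin{bmatrix}
1 & 0 & 0 & 0 \\
0 & 1 & 0 & 0 \\
(\lambda-1)p^2 & p^2 & q^2 & 0 \\
u^3 & (1-\lambda)p^2 & 0 & q^2
\end{bmatrix},\qquad q^2\neq 0,
\]
whose central $2\times 2$ block is precisely $B(p^2,q^2)$.

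It remains to identify this group with $\Gamma_\triangle$. For $\lambda = 1$ the entries $(\lambda-1)p^2$ and $(1-\lambda)p^2$ vanish and the group is literally $\Gamma_\triangle$. For arbitrary $\lambda$ I would pass to the basis $\hat{e}_0 = e_0 - (\lambda-1)e_1$, $\hat{e}_1 = e_1$, $\hat{e}_2 = e_2 - (\lambda-1)e_3$, $\hat{e}_3 = e_3$; a short check shows $\hat{e}_1\hat{e}_2 = \hat{e}_3$ and that the flag $\mathfrak{p}'\subset\mathfrak{b}\subset\mathfrak{p}$ is respected, so $\{\hat{e}_i\}$ is again a distinguished basis of $\mathfrak{b}(\lambda)$. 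Conjugating the matrices above by the associated change-of-basis matrix cancels both off-diagonal entries and sends each automorphism to
\[
\begin{bmatrix}
1 & 0 & 0 & 0 \\
0 & 1 & 0 & 0 \\
0 & p^2 & q^2 & 0 \\
u^3 + (\lambda-1)^2 p^2 & 0 & 0 & q^2
\end{bmatrix}\in\Gamma_\triangle .
\]
Hence in the basis $\{\hat{e}_i\}$ the automorphism group is exactly $\Gamma_\triangle$, proving $\Aut(\mathfrak{b}(\lambda)) = \Gamma_\triangle$. Equivalently one may compute the composition law of the displayed matrices and observe that it reproduces verbatim the multiplication of $\Gamma_\triangle$ under $(p^2,q^2,u^3)\mapsto(p^2,q^2,u^3)$, which already exhibits an abstract isomorphism.

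The step I expect to be the crux is this final identification. Unlike in Proposition~\ref{a331}, the automorphisms of $\mathfrak{b}(\lambda)$ do \emph{not} emerge in the $\Gamma_\triangle$-form from the computation: in the canonical basis they carry the genuine off-diagonal entries $(\lambda-1)p^2$ and $(1-\lambda)p^2$, so the bare scaling isomorphism of Lemma~\ref{lemiso} no longer suffices. The real work is to find the correcting change of distinguished basis above (equivalently the group automorphism $u^3\mapsto u^3+(\lambda-1)^2p^2$) that absorbs these terms, after which the verification is a single matrix product. A minor point to watch while solving is that $u^0 = 1$ must be extracted from the $(1,1)$-entry rather than from the $(2,2)$-entry, so that the argument remains uniform in $\lambda$ and in particular covers $\lambda = 0$.
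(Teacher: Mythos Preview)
Your argument is correct and its first half---solving \eqref{isommatrixeq} for the automorphism matrices---is exactly the paper's computation, down to the same constraints $q^1=q^3=0$, $u^0=p^1=1$, $u^1=0$, $p^3=(1-\lambda)p^2$, $u^2=(\lambda-1)p^2$.

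For the identification with $\Gamma_\triangle$ you take a slightly different route from the paper. The paper simply multiplies two of the automorphism matrices, observes that the product has the same shape with parameters $(p^2+q^2r^2,\,q^2s^2,\,u^3+q^2v^3)$, and notes that this is verbatim the multiplication law of $\Gamma_\triangle$; the map ``drop the $\lambda$-dependent off-diagonal entries'' is then visibly an isomorphism. You instead realise this isomorphism by conjugating with the change of distinguished basis $\hat e_0=e_0-(\lambda-1)e_1$, $\hat e_2=e_2-(\lambda-1)e_3$; the calculation $P^{-1}MP\in\Gamma_\triangle$ is correct, and the resulting reparametrisation $u^3\mapsto u^3+(\lambda-1)^2p^2$ is precisely the group automorphism you mention. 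Both approaches are short; the paper's direct multiplication check avoids having to verify that the new basis is again distinguished, while your conjugation argument explains \emph{why} the off-diagonal entries are inessential. Your final paragraph already records the paper's method as an alternative, so nothing is missing.
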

\begin{proof}
	From the matrix equation 
	\[\begin{bmatrix}
		p^1 & q^1\lambda & q^1 \\
		p^2 & q^2\lambda & q^2 \\
		p^3 & q^3\lambda & q^3
	\end{bmatrix} =\begin{bmatrix}	
		u^0p^1 & u^0q^1 & 0\\
		u^0(\lambda p^2+p^3) & u^0(\lambda q^2+q^3) & u^0(p^1 q^2-p^2 q^1)\\
		u^1p^2 - u^2p^1 &  u^1q^2 - u^2q^1 & 0
	\end{bmatrix} \]
	it follows that $q^1=q^3=0$ and hence $p^1 q^2 \neq 0$ and $u^0=1$. Since $q^2=p^1 q^2$ and $q^2 \neq 0$ we have $p^1=1$ and $u^1=0$. Furthermore we obtain $p^3=p^2(1-\lambda)$ and $u^2=-p^3=p^2(\lambda-1)$. 
	The group $\Aut(\mathfrak{b}(\lambda))$ of automorphisms of the algebras $\mathfrak{b}(\lambda)$ consists of matrices 
	\begin{equation}\label{Amatrixaut1lambda} \begin{bmatrix}
			1 &0 & 0 & 0 \\
			0 &1 & 0 & 0 \\
			p^2(\lambda-1) &p^2 & q^2 & 0 \\
			u^3 &p^2(1-\lambda) & 0 & q^2 
		\end{bmatrix},\ u^3,\  p^2, q^2 \in\mathbb{K}.\end{equation} 
	with multiplication  
	$$ \begin{bmatrix}
		1 &0 & 0 & 0 \\
		0 &1 & 0 & 0 \\
		p^2(\lambda-1) &p^2 & q^2 & 0 \\
		u^3 &p^2(1-\lambda) & 0 & q^2 
	\end{bmatrix}\begin{bmatrix}
		1 &0 & 0 & 0 \\
		0 &1 & 0 & 0 \\
		r^2(\lambda-1) &r^2 & s^2 & 0 \\
		v^3 &r^2(1-\lambda) & 0 & s^2 
	\end{bmatrix} =$$
	$$= \begin{bmatrix}
		1 &0 & 0 & 0 \\
		0 &1 & 0 & 0 \\
		(p^2+q^2r^2)(\lambda-1) &p^2+q^2r^2 & q^2s^2 & 0 \\
		u^3+q^2v^3 &(p^2+q^2r^2)(1-\lambda) & 0 & q^2s^2 
	\end{bmatrix}. $$
	Since $$ \begin{bmatrix}
		1 &0 & 0 & 0 \\
		0 &1 & 0 & 0 \\
		0 &p^2 & q^2 & 0 \\
		u^3 & 0 & 0 & q^2 
	\end{bmatrix}\begin{bmatrix}
		1 &0 & 0 & 0 \\
		0 &1 & 0 & 0 \\
		0 &r^2 & s^2 & 0 \\
		v^3 & 0 & 0 & s^2 
	\end{bmatrix} = \begin{bmatrix}
		1 &0 & 0 & 0 \\
		0 &1 & 0 & 0 \\
		0 &p^2+q^2r^2 & q^2s^2 & 0 \\
		u^3+q^2v^3 & 0 & 0 & q^2s^2 
	\end{bmatrix},$$
	we see that the map 
	$$\begin{bmatrix}
		1 &0 & 0 & 0 \\
		0 &1 & 0 & 0 \\
		p^2(\lambda-1) &p^2 & q^2 & 0 \\
		u^3 &p^2(1-\lambda) & 0 & q^2 
	\end{bmatrix}\mapsto \begin{bmatrix}
		1 &0 & 0 & 0 \\
		0 &1 & 0 & 0 \\
		0 &p^2 & q^2 & 0 \\
		u^3 & 0 & 0 & q^2 
	\end{bmatrix}$$
	is an isomorphism between (\ref{Amatrixaut1lambda}) and $\Gamma_\triangle$, proving the assertion.
\end{proof} 
Let us denote 
\begin{equation}\label{fifinullauj}\begin{split} &D =  \begin{bmatrix}
			p^1 & 0 \\
			0 & q^2
		\end{bmatrix}, \quad {\bf a} =  \begin{bmatrix}
			p^3 \\
			q^3
		\end{bmatrix},\quad {\bf N} =  \begin{bmatrix}
			\alpha & 0\\
			0 & \beta
		\end{bmatrix},\quad \phi(D,{\bf a}) = \begin{bmatrix}
			-\frac{q^3}{q^2} \\
			\frac{p^3}{p^1} \\
		\end{bmatrix},\\ 
		&\Gamma(\alpha,\beta) =\left\{ \begin{bmatrix}
			1 & {\bf 0}^t & 0  \\
			{\bf N}\phi(D,{\bf a}) & D & {\bf 0}  \\
			u^3 & {\bf a}^t & |D| 
		\end{bmatrix},\ u^3,\  p^1, q^2, p^3, q^3 \in\mathbb{K}\right\}, \quad\alpha,\beta\in\mathbb K.\end{split} \nonumber \end{equation}
\begin{prop}\label{Amatrixaut5} Assume $\mu\neq 1$.
	The groups $\mathcal{\Aut}(\mathfrak{d}(\lambda,\mu))$ of automorphisms of the algebras $\mathfrak{d}(\lambda,\mu)$, 
	\begin{enumerate}
		\item[(A)] contain a subgroup isomorphic to 
		\begin{equation}\label{autgammapluszminusz} \Gamma(\pm 1,1)=\begin{cases} \Gamma(1,1)\quad\;\;\text{if}\quad\mu - 2\lambda + 1 \neq 0,\\ \Gamma(-1,1)\quad\text{if}\quad\mu - 2\lambda + 1 = 0.\end{cases} \nonumber \end{equation}
		\item[(B)]  	If $\lambda^2+(\mu+1)^2\neq 0$ then $\mathcal{\Aut}(\mathfrak{d}(\lambda,\mu))\cong\Gamma(\pm 1,1)$.
	\end{enumerate}
\end{prop}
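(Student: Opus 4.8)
The plan is to read off $\Aut(\mathfrak{d}(\lambda,\mu))$ from the isomorphism equation \eqref{isommatrixeq} specialised to $\hat{\mathfrak{c}}=\mathfrak{c}$, and then to recognise the resulting matrix group inside the family $\Gamma(\alpha,\beta)$ of \eqref{fifinullauj}. Substituting $\hat{x}_1^1=x_1^1=1$, $\hat{x}_2^2=x_2^2=\mu$, $\hat{x}_3^3=x_3^3=\lambda$, $\hat{x}_3^2=\hat{x}_2^3=0$ into \eqref{isommatrixeq} and using $p^1q^2-p^2q^1\neq0$, the first two rows together with the $(3,3)$ entry give the scalar conditions $(u^0-1)p^1=0$, $(u^0-\mu)q^1=0$, $(u^0\mu-1)p^2=0$, $\mu(u^0-1)q^2=0$, $\lambda(u^0-1)=0$, whereas the $(3,1)$ and $(3,2)$ entries give the relations $(1-\lambda)p^3=u^1p^2-u^2p^1$ and $(\mu-\lambda)q^3=u^1q^2-u^2q^1$ fixing $u^1,u^2$.

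First I would determine $u^0$. If $u^0\neq1$, then $(u^0-1)p^1=0$ and $\lambda(u^0-1)=0$ force $p^1=0$ and $\lambda=0$; as $p^1q^2-p^2q^1\neq0$ this gives $p^2q^1\neq0$, and then $(u^0-\mu)q^1=0$, $(u^0\mu-1)p^2=0$ give $u^0=\mu$ and $\mu^2=1$, hence $\mu=-1$ since $\mu\neq1$. Thus $u^0\neq1$ can only occur for $\mu=-1,\lambda=0$, i.e. $\lambda^2+(\mu+1)^2=0$, which the hypothesis of (B) excludes; so $u^0=1$, and then $\mu\neq1$ forces $q^1=p^2=0$. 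Every automorphism is therefore direct, with $u^1=(\mu-\lambda)q^3/q^2$ and $u^2=(\lambda-1)p^3/p^1$. Since the subdiagonal block of \eqref{fifinullauj} is ${\bf N}\phi(D,{\bf a})=[\,-\alpha q^3/q^2,\ \beta p^3/p^1\,]^{t}$, this identifies the group as $\Gamma(\lambda-\mu,\lambda-1)$. Carried out without the hypothesis of (B), the same computation shows the direct automorphisms always constitute the subgroup $\Gamma(\lambda-\mu,\lambda-1)$; this gives the subgroup assertion of (A), and together with the exclusion above it gives all of (B), once the isomorphism type of $\Gamma(\lambda-\mu,\lambda-1)$ is determined.

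It remains to prove $\Gamma(\lambda-\mu,\lambda-1)\cong\Gamma(\pm1,1)$ with the stated sign, which is the crux. Put $\alpha=\lambda-\mu$, $\beta=\lambda-1$. Computing the product law of $\Gamma(\alpha,\beta)$ from the block multiplication, the last coordinate acquires the cocycle $-\alpha\,p^3s^3/s^2+\beta\,q^3r^3/r^1$; restricting to $D=E$ yields the kernel subgroup $T(\alpha,\beta)$ whose commutator equals $(\alpha+\beta)(q^3r^3-p^3s^3)$, so $T(\alpha,\beta)$ is abelian exactly when $\alpha+\beta=0$. As $\alpha+\beta=2\lambda-\mu-1=-(\mu-2\lambda+1)$ and $(\alpha,\beta)\neq(0,0)$ because $\mu\neq1$, this invariant is precisely the dichotomy of the statement; it is consistent with $\Gamma(1,1)=\Gamma^{(+)}$ having non-abelian kernel and $\Gamma(-1,1)=\Gamma^{(-)}$ abelian kernel, which the preceding Proposition, where $\Gamma^{(+)}$ and $\Gamma^{(-)}$ are shown to be non-isomorphic, confirms are genuinely distinct.

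For the constructions I would use explicit maps. A short computation shows that $\Phi:(p^1,q^2,p^3,q^3,u^3)\mapsto(p^1,q^2,p^3,q^3,\,c\,u^3+\kappa\,p^3q^3/(p^1q^2))$ is a homomorphism $\Gamma(\alpha,\beta)\to\Gamma(\alpha',\beta')$ exactly when $c(\alpha+\beta)=\alpha'+\beta'$ and $\kappa=c\alpha-\alpha'$, the added term being the coboundary that corrects the rescaled cocycle $c(-\alpha\,p^3s^3/s^2+\beta\,q^3r^3/r^1)$ to $-\alpha'\,p^3s^3/s^2+\beta'\,q^3r^3/r^1$; it is bijective because $c\neq0$. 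Hence when $\alpha+\beta\neq0$ the value $c=2/(\alpha+\beta)$ gives $\Gamma(\alpha,\beta)\cong\Gamma(1,1)$, and when $\alpha+\beta=0$, so $\beta=-\alpha\neq0$, the rescaling of Lemma \ref{lemiso} with $\tau=\beta^{-1}$ gives $\Gamma(\alpha,\beta)\cong\Gamma(-1,1)$. Because $\mu-2\lambda+1=-(\alpha+\beta)$, this produces exactly the sign of the statement, completing (A) and (B). The main obstacle is this last step: the rescaling of Lemma \ref{lemiso} multiplies $(\alpha,\beta)$ only by a common scalar and cannot change the ratio $\alpha:\beta$, so the coboundary term $\kappa\,p^3q^3/(p^1q^2)$ is what collapses the entire region $\alpha+\beta\neq0$ onto $\Gamma(1,1)$.
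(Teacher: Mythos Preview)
Your argument is correct. The computation of $\Aut(\mathfrak{d}(\lambda,\mu))$ from the isomorphism equation, the reduction to $u^0=1$ (with the exceptional case $(\lambda,\mu)=(0,-1)$ ruled out by the hypothesis of (B)), and the identification of the resulting group as $\Gamma(\lambda-\mu,\lambda-1)$ all follow the same path as the paper. The divergence is in the proof that $\Gamma(\lambda-\mu,\lambda-1)\cong\Gamma(\pm1,1)$. The paper decomposes $\Gamma(\alpha,\beta)$ as a semidirect product $T\ltimes U(\alpha,\beta)$ of the diagonal torus with its unipotent kernel, invokes the Baker--Campbell--Hausdorff formula to pass to the nilpotent Lie algebra $\mathfrak{u}(\alpha,\beta)$, computes the Lie bracket there (which turns out to be $(\mu-2\lambda+1)(x_1y_2-x_2y_1)$ in the centre), exhibits an explicit Lie algebra isomorphism $\mathfrak{u}(\alpha,\beta)\to\mathfrak{u}\bigl(-\tfrac{\alpha+\beta}{2},-\tfrac{\alpha+\beta}{2}\bigr)$, and only then applies Lemma~\ref{lemiso} to reach $\Gamma(1,1)$. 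You instead stay at the group level throughout: having written down the cocycle $-\alpha\,p^3s^3/s^2+\beta\,q^3r^3/r^1$ in the product law, you correct it by the explicit coboundary $\kappa\,p^3q^3/(p^1q^2)$ and reduce everything to the scalar condition $c(\alpha+\beta)=\alpha'+\beta'$. This is more elementary---no BCH, no detour through Lie algebras---and makes it immediately transparent that only the sum $\alpha+\beta=2\lambda-\mu-1$ matters; the paper's route, by contrast, explains structurally why the isomorphism type is governed by a single Heisenberg-type structure constant.
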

\begin{proof}
	The first and second rows and the third column of the matrix equation 
	\begin{equation}\label{Amatrixadlambdamu}\begin{split}&\begin{bmatrix}
				p^1 & q^1\mu & 0\\
				p^2 & q^2\mu & 0\\
				p^3 & q^3\mu & (p^1q^2-p^2q^1)\lambda
			\end{bmatrix} =\\=&\begin{bmatrix}	
				u^0p^1 & u^0q^1 & 0\\
				u^0\mu p^2 & u^0\mu q^2 & 0\\
				u^0\lambda p^3+u^1p^2 - u^2p^1 &  u^0\lambda q^3 +u^1q^2 - u^2q^1 & u^0\lambda(p^1q^2-p^2q^1)
	\end{bmatrix}\end{split} \end{equation}	 
	give the equations 
	\begin{equation}\label{firstsecondrows} p^1(u^0-1)=0,\quad q^1(u^0-\mu) = 0, \quad p^2(u^0\mu-1)=0,\quad q^2\mu(u^0-1) = 0,\quad \lambda(u^0-1)=0.\end{equation}
	Assume that $u^0 = 1$. Since $\mu\neq 1$, we get from \eqref{firstsecondrows} that $p^2 = q^1 = 0$. The third row of \eqref{Amatrixadlambdamu} gives  
	$p^3(\lambda-1)=u^2 p^1$ and $q^3(\mu-\lambda)=u^1 q^2$, hence 
	\begin{equation}\label{unullegy}u^1=-\frac{q^3}{q^2}(\lambda-\mu),\quad  u^2=\frac{p^3}{p^1}(\lambda-1). \end{equation}
	Consequently, the solutions of the equation \eqref{Amatrixadlambdamu} satisfying $u^0=1$ determine the subgroup	
	\begin{equation}\label{subgrouplambdamu} \Gamma(\lambda-\mu,\lambda-1)\subseteq\mathcal{\Aut}(\mathfrak{d}(\lambda,\mu)).\end{equation}	 
	\begin{lem}\label{gammapluszminusz}
		The subgroup  $\Gamma(\lambda-\mu,\lambda-1)\subseteq \mathcal{\Aut}(\mathfrak{d}(\lambda,\mu))$ is isomorphic to 
		\begin{enumerate}
			\item[(a)] $\Gamma(-1,1)$, if $\mu - 2\lambda + 1 =0$,
			\item[(b)] $\Gamma(1,1)$, if $\mu - 2\lambda + 1 \neq 0$.
		\end{enumerate}
	\end{lem}
	\begin{proof}		
		If $\mu - 2\lambda + 1 =0$, then $\Gamma(\lambda-\mu,\lambda-1)=\Gamma(-(\lambda-1),\lambda-1)$. Lemma \ref{lemiso} implies  $$
		\Gamma(-1,1)\cong \Gamma(-(\lambda-1),\lambda-1)\subset \mathcal{\Aut}(\mathfrak{d}(\lambda,\mu)),$$
		proving assertion (a).\\
		Assume $\mu - 2\lambda + 1 \neq 0$.
		The group $\Gamma(\lambda-\mu,\lambda-1)$ is a solvable algebraic group for any $\lambda-\mu, \lambda-1 \in \mathbb K$. 
		The map
		$$\begin{bmatrix}
			1 &0 & 0 & 0 \\
			-q^3(\lambda-\mu) & p^1 & 0 & 0 \\
			p^3(\lambda-1) &0 & q^2 & 0 \\
			u^3 &p^3 & q^3 & p^1q^2 
		\end{bmatrix}\mapsto \begin{bmatrix}
			1 &0 & 0 & 0 \\
			0 & p^1 & 0 & 0 \\
			0 &0 & q^2 & 0 \\
			0 & 0 & 0 & p^1q^2 
		\end{bmatrix}$$ gives a homomorphism $\Gamma(\lambda-\mu,\lambda-1)\to T$, where $T$ is the $\mathbb K$-torus  consisting of diagonal matrices.  The kernel $U(\lambda-\mu,\lambda-1) = \left\{ \begin{bmatrix}
			1 &0 & 0 & 0 \\
			-q^3(\lambda-\mu) & 1 & 0 & 0 \\
			p^3(\lambda-1) &0 & 1 & 0 \\
			u^3 &p^3 & q^3 & 1 
		\end{bmatrix},\ u^3,\ p^3, q^3 \in\mathbb{K}\right\}$ of this homomorphism 
		is a normal unipotent subgroup. The group $\Gamma(\lambda-\mu,\lambda-1)$ is a semidirect product $T\ltimes U(\lambda-\mu,\lambda-1)$, since any element can be written into the form $ut = t\cdot t^{-1}ut$, $u\in U(\lambda-\mu,\lambda-1)$, $t\in T$, where $T$  is acting on $U(\lambda-\mu,\lambda-1)$ by conjugation. The Baker--Campbell--Hausdorff formula defines a group $\mathcal{G}(\mathfrak{u}(\lambda-\mu,\lambda-1))$ on the nilpotent Lie algebra $$\mathfrak{u}(\lambda-\mu,\lambda-1) = \left\{ \begin{bmatrix}
			0 &0 & 0 & 0 \\
			-y(\lambda-\mu) & 0 & 0 & 0 \\
			x(\lambda-1) &0 & 0 & 0 \\
			z &x & y & 0 
		\end{bmatrix},\ x,y,z \in\mathbb{K}\right\}$$ of $U(\lambda-\mu,\lambda-1)$, hence the group  $\mathcal{G}(\mathfrak{u}(\lambda-\mu,\lambda-1))$ is isomorphic to $U(\lambda-\mu,\lambda-1)$. Considering the action of $T$ on $\mathcal{G}(\mathfrak{u}(\lambda-\mu,\lambda-1))$ by conjugation induced by the action of $T$ on $U(\lambda-\mu,\lambda-1)$ we get, that the group 
		$\Gamma(\lambda-\mu,\lambda-1)$ is isomorphic to the semidirect product $T\ltimes \mathcal{G}(\mathfrak{u}(\lambda-\mu,\lambda-1))$.  The Lie bracket operation on $\mathfrak{u}(\lambda-\mu,\lambda-1)$ is given by 
		\[\begin{split}
			&\left[ \begin{bmatrix}
				0 &0 & 0 & 0 \\
				-y_1(\lambda-\mu) & 0 & 0 & 0 \\
				x_1(\lambda-1) &0 & 0 & 0 \\
				z_1 &x_1 & y_1 & 0 
			\end{bmatrix},\begin{bmatrix}
				0 &0 & 0 & 0 \\
				-y_2(\lambda-\mu) & 0 & 0 & 0 \\
				x_2(\lambda-1) &0 & 0 & 0 \\
				z_2 &x_2 & y_2 & 0 
			\end{bmatrix} \right] =\\& = \begin{bmatrix}
				0 &0 & 0 & 0 \\
				0 & 0 & 0 & 0 \\
				0 &0 & 0 & 0 \\
				(\mu-2\lambda+1)(x_1y_2-x_2y_1) & 0 & 0 & 0 
			\end{bmatrix},
		\end{split}\]
		hence the map $ \begin{bmatrix}
			0 &0 & 0 & 0 \\
			-y(\lambda-\mu) & 0 & 0 & 0 \\
			x(\lambda-1) &0 & 0 & 0 \\
			z &x & y & 0 
		\end{bmatrix}\mapsto \begin{bmatrix}
			0 &0 & 0 & 0 \\
			\frac{\mu-2\lambda+1}{2}y & 0 & 0 & 0 \\
			-\frac{\mu-2\lambda+1}{2} x  &0 & 0 & 0 \\
			z &x & y & 0 
		\end{bmatrix}$ 
		determines a Lie algebra isomorphism $\mathfrak{u}(\lambda-\mu,\lambda-1)\to\mathfrak{u}(-\frac{\mu-2\lambda+1}{2},-\frac{\mu-2\lambda+1}{2})$. 
		Consequently, we get the group isomorphism $T\ltimes \mathcal{G}(\mathfrak{u}(\lambda-\mu,\lambda-1))\to T\ltimes \mathcal{G}(\mathfrak{u}(-\frac{\mu-2\lambda+1}{2},-\frac{\mu-2\lambda+1}{2}))$. Using the isomorphisms
		\[\begin{split}\Gamma(\lambda-\mu,\lambda-1)\cong T\ltimes U(\lambda-\mu,\lambda-1)\cong T\ltimes \mathcal{G}(\mathfrak{u}(\lambda-\mu,\lambda-1)),\\
		\end{split}\]
		and applying Lemma \ref{lemiso} we obtain
		$$\Gamma(\lambda-\mu,\lambda-1)\cong\Gamma(-\frac{\mu-2\lambda+1}{2},-\frac{\mu-2\lambda+1}{2})\cong\Gamma(1,1).$$Hence 
		$\Gamma(1,1)\cong\Gamma(\lambda-\mu,\lambda-1)\subseteq\mathcal{\Aut}(\mathfrak{d}(\lambda,\mu))$, 
		and we get the assertion (b).
	\end{proof} 
	By relation \eqref{subgrouplambdamu} we obtain from Lemma \ref{gammapluszminusz} the assertion of  Proposition \ref{Amatrixaut5} (A).\par
	Assume $u^0\neq 1$. From \eqref{firstsecondrows} follow $p^1 =0$, $p^2q^1\neq 0$ and $u^0-\mu = u^0\mu-1=\lambda =0$. Since $\mu\neq 1$ we get $(\lambda,\mu) = (0,-1)$. Hence if $(\lambda,\mu)\neq (0,-1)$ then necessarily $u^0=1$ and we have the isomorphism $\Gamma(\lambda-\mu,\lambda-1)\cong\mathcal{\Aut}(\mathfrak{d}(\lambda,\mu))$, proving Proposition  \ref{Amatrixaut5} (B).
\end{proof}
Consider the case $(\lambda,\mu)=(0,-1)$ we investigate the group $\Aut(\mathfrak{a}_1) =\Aut(\mathfrak{d}(0,-1))$ of automorphisms of the Lie algebra $\mathfrak{a}_1$.
\begin{prop}
	The group $\mathcal{\Aut}(\mathfrak{a}_1)$ is isomorphic to the group 
	\begin{equation}\label{Amatrixaut5uu} 
		\mathcal{\Aut}(\mathfrak{a}_1)= \Gamma(-1,1)\cup(\Gamma(-1,1)\cdot\sigma),  \end{equation}
	where $\sigma $ is the involution $\sigma = \begin{bmatrix}
		-1& 0 & 0 & 0 \\
		0 &	0 & 1 & 0 \\
		0 &	1 & 0 & 0 \\
		0 &	0 & 0 & -1
	\end{bmatrix}\in\mathcal{\Aut}(\mathfrak{a}_1).$  
\end{prop}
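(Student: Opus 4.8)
The plan is to specialize the matrix equation \eqref{Amatrixadlambdamu} to $(\lambda,\mu)=(0,-1)$ and to sort the automorphisms by the value of the scalar $u^0$, the $(0,0)$-entry of the matrix \eqref{Amatrixpaut}. First I would note that in a product of two matrices of the form \eqref{Amatrixpaut} the top rows compose as $(u^0,0,0,0)$, so $u^0:\Aut(\mathfrak{a}_1)\to\mathbb K^\ast$ is a group homomorphism. Specializing \eqref{firstsecondrows} to $\lambda=0$, $\mu=-1$ gives $p^1(u^0-1)=0$, $q^1(u^0+1)=0$, $p^2(u^0+1)=0$, $q^2(u^0-1)=0$, while the nondegeneracy condition forces $p^1q^2-p^2q^1\neq0$. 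If $u^0\neq1$ then $p^1=q^2=0$, so $-p^2q^1\neq0$, whence $p^2,q^1\neq0$ and the remaining two equations yield $u^0=-1$. Thus $u^0$ takes only the values $\pm1$.

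Next I would identify the kernel of $u^0$. The solutions with $u^0=1$ are precisely the subgroup $\Gamma(\lambda-\mu,\lambda-1)=\Gamma(1,-1)$ of \eqref{subgrouplambdamu}; since $\mu-2\lambda+1=0$ here, Lemma \ref{gammapluszminusz}(a) together with Lemma \ref{lemiso} (with $\tau=-1$) gives $\Gamma(1,-1)\cong\Gamma(-1,1)$. Hence $\ker(u^0)=\Gamma(1,-1)\cong\Gamma(-1,1)$ is a normal subgroup of $\Aut(\mathfrak{a}_1)$ of index at most two.

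To see that the index is exactly two and to describe the nontrivial coset, I would check directly that the explicit involution $\sigma$ of \eqref{Amatrixaut5uu} is an automorphism of $\mathfrak{a}_1$. Under $\sigma(e_0)=-e_0$, $\sigma(e_1)=e_2$, $\sigma(e_2)=e_1$, $\sigma(e_3)=-e_3$ each of the three relations $e_1e_2=e_3$, $e_0e_1=e_1$, $e_0e_2=-e_2$ is preserved by a one-line computation, and clearly $\sigma^2=I$ with $u^0(\sigma)=-1$. Since $u^0$ is a homomorphism onto $\{\pm1\}$ with kernel $\Gamma(1,-1)$ and $\sigma^{-1}=\sigma$, any automorphism $\psi$ with $u^0(\psi)=-1$ satisfies $\psi\sigma\in\ker(u^0)$, so $\psi\in\Gamma(1,-1)\sigma$. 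Therefore
\[\Aut(\mathfrak{a}_1)=\Gamma(1,-1)\;\sqcup\;\Gamma(1,-1)\,\sigma\;\cong\;\Gamma(-1,1)\cup(\Gamma(-1,1)\cdot\sigma),\]
which is the assertion.

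The steps are almost entirely an assembly of results already available, and the only genuinely new computation is the solution of \eqref{Amatrixadlambdamu} in the exceptional case $u^0=-1$ and the verification that $\sigma$ lies in it. I expect the one point requiring care to be purely bookkeeping: checking that the isomorphism $\Gamma(1,-1)\cong\Gamma(-1,1)$ supplied by Lemmas \ref{gammapluszminusz} and \ref{lemiso} is compatible with the coset structure, so that the concrete decomposition $\Gamma(1,-1)\sqcup\Gamma(1,-1)\sigma$ transports to the form $\Gamma(-1,1)\cup(\Gamma(-1,1)\cdot\sigma)$ stated in the proposition.
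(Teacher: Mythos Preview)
Your proof is correct and its overall architecture matches the paper's, but your route through the first half is more streamlined. Where the paper explicitly computes the set $\Lambda$ of solutions with $u^0\neq 1$, then checks by direct matrix conjugation that $\sigma$ normalizes $\Gamma(1,-1)$ and by the explicit product \eqref{mellekoszt} that $\Lambda=\Gamma(1,-1)\cdot\sigma$, you instead observe that $u^0$ is a group homomorphism to $\mathbb K^\ast$, pin down its image as $\{\pm1\}$, and verify directly that $\sigma$ is an automorphism with $u^0(\sigma)=-1$; normality of the kernel and the coset decomposition then come for free from elementary group theory. This buys you the saving of several matrix computations. The final ``bookkeeping'' step you flag is precisely where the paper does additional work: Lemma~\ref{lemiso} alone only furnishes an isomorphism $\Gamma(1,-1)\to\Gamma(-1,1)$ of the index-two subgroup, so the paper states and proves an auxiliary result (Lemma~\ref{izomfi}) extending this rescaling map $(\mathcal A,\alpha)\mapsto(\mathcal A,\tau\alpha)$ to the full group $\Gamma(1,-1)\cup\Lambda$, and then notes that $\sigma$ is fixed by it (since $\sigma$ has ${\bf a}={\bf 0}$ and $u=0$). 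Your expectation that this step is routine is correct, but it does require this small extension of Lemma~\ref{lemiso} to be made explicit.
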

\begin{proof}
	Since $(\lambda,\mu)=(0,-1)$ we get $\Gamma(1,-1)\subseteq\Aut(\mathfrak{a}_1)$ from the relation \eqref{subgrouplambdamu}. Additionally, if $u^0\neq 1$ then the equations \eqref{firstsecondrows} imply $p^1=0$, $p^2q^1\neq 0$,  $u^0 = \mu$, $\mu = -1$, $q^2=0$ and the third row of equation \eqref{Amatrixadlambdamu} and the equations \eqref{unullegy} give the solutions $$u^1 = \frac{p^3}{p^2},\; u^2 = \frac{q^3}{q^1}\quad\text{and}\quad u^1=-\frac{q^3}{q^2}
	,\;  u^2=-\frac{p^3}{p^1}.$$ 
	Hence the group of automorphisms is the union $\mathcal{\Aut}(\mathfrak{a}_1)=\Gamma(1,-1)\cup\Lambda$ of the group $\Gamma(1,-1)$ and the set 
	\begin{equation}\label{mallekosztaly}\Lambda = \left\{ \begin{bmatrix}
			-1 &0 & 0 & 0 \\
			\frac{p^3}{p^2} &0 & q^1 & 0 \\
			\frac{q^3}{q^1} &p^2 & 0 & 0 \\
			u^3 &p^3 & q^3 & -p^2q^1 
		\end{bmatrix},\ u^3,\  p^2, q^1, p^3, q^3 \in\mathbb{K}\right\}. \nonumber \end{equation}		
	Denote $A^{(1)}$, $B^{(1)}$, $X^{(1)}$ diagonal $2\times 2$-matrices and $A^{(-1)}$, $B^{(-1)}$, $X^{(-1)}$ antidiagonal matrices of the form $\begin{bmatrix}
		0 & q^1\\
		p^2 & 0
	\end{bmatrix}$, $p^2, q^1\in \mathbb K$. If $X$ is a matrix of the form $\begin{bmatrix}
		p^1 & 0\\
		0 & q^2
	\end{bmatrix}$ or $\begin{bmatrix}
		0 & q^1\\
		p^2 & 0
	\end{bmatrix}$ and ${\bf x}=\begin{bmatrix}
		p^3\\
		q^3
	\end{bmatrix}$ then define the map $(X^{(\epsilon)},{\bf x})\mapsto\phi(X^{(\epsilon)},{\bf x})$, $\epsilon = \pm 1$, by 
	\begin{equation} \label{figammalambda} \phi(X^{(1)},{\bf x}) = 
		-\begin{bmatrix}
			\frac{q^3}{q^2} \\
			\frac{p^3}{p^1} \\
		\end{bmatrix},\quad \text{and}\quad \phi(X^{(2)},{\bf x}) = \begin{bmatrix}
			\frac{p^3}{p^2} \\
			\frac{q^3}{q^1} \\
		\end{bmatrix}.\end{equation}
	The map \eqref{figammalambda} plays an important role in the determination of the group   $\Gamma(1,-1)\cup\Lambda$ consisting of matrices 
	$\begin{bmatrix}
		\epsilon & {\bf 0}^t & 0\\
		\phi(X^{(\epsilon)},{\bf x}) & X^{(\epsilon)} & {\bf 0}\\
		u & {\bf x}^t & |X^{(\epsilon)}|
	\end{bmatrix}$ with $\epsilon=\pm 1$. 
	In the following we will denote this group by $(\Gamma(1,-1)\cup\Lambda)^\phi$. 
	The multiplication of $(\Gamma(1,-1)\cup\Lambda)^\phi$ has the form 
	\begin{equation}\label{szorzasmatrix}\begin{split}\begin{bmatrix}
				\epsilon & {\bf 0}^t & 0\\
				\phi(A^{(\epsilon)},{\bf a}) & A^{(\epsilon)} &  {\bf 0}\\
				u & {\bf a}^t & |A^{(\epsilon)}|
			\end{bmatrix}\begin{bmatrix}
				\delta & {\bf 0}^t & 0\\
				\phi(B^{(\delta)},{\bf b}) & 
				B^{(\delta)} &  {\bf 0}\\
				v & {\bf b}^t & |B^{(\delta)}|
			\end{bmatrix}=\quad&\\= \begin{bmatrix}
				\epsilon\delta & {\bf 0}^t & 0\\
				\phi(A^{(\epsilon)},{\bf a})\delta+A^{(\epsilon)}\phi(B^{(\delta)},{\bf b}) & A^{(\epsilon)}B^{(\delta)} &  {\bf 0} \\
				u\delta + |A^{(\epsilon)}|v + {\bf a}^t\phi(B^{(\delta)},{\bf b}) & {\bf a}^tB^{(\delta)} + |A^{(\epsilon)}|{\bf b}^t & |A^{(\epsilon)}B^{(\delta)}|
			\end{bmatrix}&.\end{split}\end{equation}
	Hence the map $\begin{bmatrix}
		\epsilon & {\bf 0}^t & 0\\
		\phi(X^{(\epsilon)},{\bf x}) & X^{(\epsilon)} &  {\bf 0}\\
		u & {\bf x}^t & |X^{(\epsilon)}|
	\end{bmatrix}\mapsto \left(\begin{bmatrix}
		X^{(\epsilon)} &  {\bf 0}\\
		{\bf x}^t & |X^{(\epsilon)}|
	\end{bmatrix},u\right)$ is an isomorphism of the group $(\Gamma(1,-1)\cup\Lambda)^\phi$ to the group extension defined by the multiplication 
	\begin{equation}\label{szorzasmatrixbovit}\begin{split}
			&\left(\mathcal{A}^{(\epsilon)},u \right) \left(\mathcal{B}^{(\delta)},v \right)= 
			\left(\mathcal{A}^{(\epsilon)} \mathcal{B}^{(\delta)}, u\delta + |A^{(\epsilon)}|v + {\bf a}^t\phi(B^{(\delta)},	{\bf b})\right), \end{split}\end{equation}
		where $\mathcal{A}^{(\epsilon)}= \begin{bmatrix}
			A^{(\epsilon)} & 0\\
			{\bf a}^t & |A^{(\epsilon)}|
		\end{bmatrix} $,  $\mathcal{B}^{(\delta)}= \begin{bmatrix}
		B^{(\delta)} & 0\\
		{\bf b}^t & |B^{(\delta)}|
	\end{bmatrix} $. 	
	Now, we can extend the assertion of Lemma \ref{lemiso} to the present situation:
	\begin{lem}\label{izomfi}
		We change the map $\phi$, defined by \eqref{figammalambda}, to $\lambda\phi$, for any $0\neq \lambda\in\mathbb K$ and denote the corresponding matrix group by $(\Gamma(1,-1)\cup\Lambda)^{\lambda\phi}$. The map $(\Gamma(1,-1)\cup\Lambda)^{\phi}\to(\Gamma(1,-1)\cup\Lambda)^{\lambda\phi}$ defined by $\begin{bmatrix}
			\epsilon & {\bf 0}^t & 0\\
			\phi(X^{(\epsilon)},{\bf x}) & X^{(\epsilon)} & {\bf 0} \\
			u & {\bf x}^t & |X^{(\epsilon)}|
		\end{bmatrix}\mapsto\begin{bmatrix}
			\epsilon & {\bf 0}^t & 0\\
			\lambda\phi(X^{(\epsilon)},{\bf x}) & X^{(\epsilon)} & {\bf 0} \\
			\lambda u & {\bf x}^t & |X^{(\epsilon)}|
		\end{bmatrix}$, with $\epsilon=\pm 1$, is a group isomorphism.
	\end{lem}
	\begin{proof}
		The assertion follows from the shape \eqref{szorzasmatrix} or \eqref{szorzasmatrixbovit} of the multiplication. 
	\end{proof}
	We return to the proof of the Proposition. Let $\sigma = \begin{bmatrix}
		-1& 0 & 0 & 0 \\
		0 &	0 & 1 & 0 \\
		0 &	1 & 0 & 0 \\
		0 &	0 & 0 & -1
	\end{bmatrix}$ be an involutive element of $\Lambda$. Since 
	$$\begin{bmatrix}
		-1& 0 & 0 & 0 \\
		0 &	0 & 1 & 0 \\
		0 &	1 & 0 & 0 \\
		0 &	0 & 0 & -1
	\end{bmatrix} \begin{bmatrix}
		1 &0 & 0 & 0 \\
		-\frac{q^3}{q^2} &p^1 & 0 & 0 \\
		-\frac{p^3}{p^1} &0 & q^2 & 0 \\
		u^3 &p^3 & q^3 & p^1q^2 
	\end{bmatrix}\begin{bmatrix}
		-1 & 0 & 0 & 0 \\
		0 &	0 & 1 & 0 \\
		0 &	1 & 0 & 0 \\
		0 &	0 & 0 & -1
	\end{bmatrix}=\begin{bmatrix}
		1 &0 & 0 & 0 \\
		\frac{p^3}{p^1} & q^2 & 0 & 0 \\
		\frac{q^3}{q^2} & 0 & p^1 & 0 \\
		u^3  & -q^3 & -p^3 & p^1q^2 
	\end{bmatrix},$$ 
	$\Gamma(1,-1)$ is a normal subgroup of index two of $(\Gamma(1,-1)\cup\Lambda)^{\phi}$.
	Moreover, the equation 
	\begin{equation}\label{mellekoszt} \begin{bmatrix}
			1 &0 & 0 & 0 \\
			-\frac{q^3}{q^2} &p^1 & 0	& 0 \\
			-\frac{p^3}{p^1} &0 & q^2 & 0 \\
			u^3 &p^3 & q^3 & p^1q^2 
		\end{bmatrix}\begin{bmatrix}
			-1& 0 & 0 & 0 \\
			0 &	0 & 1 & 0 \\
			0 &	1 & 0 & 0 \\
			0 &	0 & 0 & -1
		\end{bmatrix}=\begin{bmatrix}
			-1 &0 & 0 & 0 \\
			\frac{q^3}{q^2} & 0 & p^1 & 0 \\
			\frac{p^3}{p^1} & q^2 & 0 & 0 \\
			-u^3  & q^3 & p^3 & -p^1q^2 
		\end{bmatrix} \end{equation}
	shows that $\Lambda$ is the coset $\Gamma(1,-1)\cdot\sigma$ of $(\Gamma(1,-1)\cup\Lambda)^{\phi}$, hence $$\mathcal{\Aut}(\mathfrak{a}_1)= \Gamma(1,-1)\cup\Lambda=\Gamma(1,-1)\cup(\Gamma(1,-1)\cdot\sigma).$$ 
	Now, we apply the isomorphism described in Lemma \ref{izomfi} to the equation \eqref{mellekoszt} for $\lambda =-1$. Since this isomorphism preserves the involution $\sigma$,  we obtain the equation \eqref{Amatrixaut5uu}, proving the assertion.
\end{proof} 
Now we summarize the results of Section \ref{osztalyozas} about the group of automorphisms of the anti-commutative algebras given in Proposition \ref{anticommutativealgebras}.
\begin{prop}
	The automorphism groups of the algebras $\mathfrak{a}_0$, $\mathfrak{a}_1$, $\mathfrak{b}(\lambda)$, $\mathfrak{c}(\lambda)$, $\mathfrak{d}(\lambda,\mu)$ are isomorphic to the groups:
	\begin{equation*}\begin{split}
			\mathfrak{a}_0:\quad & \Gamma, \\
			\mathfrak{a}_1:\quad & \Gamma(-1,1)\cup(\Gamma(-1,1)\cdot\sigma), \\
			\mathfrak{b}(\lambda):\quad &\Gamma_\triangle,  \hspace{33pt} \\
			\mathfrak{c}(\lambda):\quad &\Gamma,\hspace{32pt}  \text{if}\quad\lambda\neq 1,\\
			&\Gamma_0 ,\hspace{28pt} \text{if}\quad\lambda=1, \\
			\mathfrak{d}(\lambda,\mu):\quad &\, \Gamma(1,1),\hspace{27pt}  \text{if}\quad \mu -2\lambda +1 \neq 0,\\
			&\, \Gamma(-1,1),\hspace{20pt}  \text{if}\quad \mu -2\lambda +1= 0.
	\end{split}\end{equation*}
\end{prop}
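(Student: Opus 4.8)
The plan is to treat this proposition as a consolidation of the case-by-case computations carried out in the preceding propositions of this section, so that the proof amounts to matching each algebra from the classification \eqref{canonical} with the group already identified as its automorphism group. First I would invoke Proposition \ref{a331}, which directly yields $\Aut(\mathfrak{a}_0) = \Gamma$ together with $\Aut(\mathfrak{c}(\lambda)) = \Gamma$ for $\lambda \neq 1$ and $\Aut(\mathfrak{c}(1)) = \Gamma_0$; these three lines of the assertion require no further argument. Similarly, Proposition \ref{0felsotrian} establishes $\Aut(\mathfrak{b}(\lambda)) \cong \Gamma_\triangle$ for every $\lambda \in \mathbb K$, covering the $\mathfrak{b}(\lambda)$ row.

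For the algebras $\mathfrak{d}(\lambda,\mu)$ I would appeal to Proposition \ref{Amatrixaut5}(B): under the defining hypotheses $\mu \neq 1$ and $\lambda^2 + (\mu+1)^2 \neq 0$ of the family \eqref{canonical}, part (B) gives the isomorphism $\Aut(\mathfrak{d}(\lambda,\mu)) \cong \Gamma(\pm 1, 1)$, with the sign dictated exactly by whether $\mu - 2\lambda + 1$ vanishes, which reproduces the last two lines. The remaining row, for the Lie algebra $\mathfrak{a}_1 = \mathfrak{d}(0,-1)$, corresponds to the one value of $(\lambda,\mu)$ excluded from the $\mathfrak{d}$-family, since there $\lambda^2 + (\mu+1)^2 = 0$; here I would cite the dedicated computation \eqref{Amatrixaut5uu}, which identifies $\Aut(\mathfrak{a}_1)$ with $\Gamma(-1,1) \cup (\Gamma(-1,1)\cdot\sigma)$.

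Since every line of the assertion is quoted from an earlier statement, there is no substantive computation left to perform; the only point demanding care is the bookkeeping of the case conditions. The main thing to verify is that the case hypotheses partition the parameter space consistently with the classification Theorem \ref{izoalgebralist}: that $\mathfrak{c}(\lambda)$ is precisely $\mathfrak{d}(\lambda,1)$ and is therefore handled separately from the $\mu \neq 1$ family, that $\mathfrak{a}_1 = \mathfrak{d}(0,-1)$ is the unique excluded member requiring the enlarged group with the involution $\sigma$, and that the dichotomy given by the vanishing or non-vanishing of $\mu - 2\lambda + 1$ used for the $\mathfrak{d}(\lambda,\mu)$ row is exactly the one appearing in Proposition \ref{Amatrixaut5}(A)--(B). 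Once these boundary cases are checked to align, the proposition follows by collecting the cited results, so I expect the assembly of the case distinctions, rather than any new argument, to be the only delicate step.
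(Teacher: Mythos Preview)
Your proposal is correct and matches the paper's treatment exactly: the paper presents this proposition explicitly as a summary (``Now we summarize the results of Section~\ref{osztalyozas}\ldots'') and gives no separate proof, so the content is precisely the collation of Propositions~\ref{a331}, \ref{0felsotrian}, \ref{Amatrixaut5}, and the result \eqref{Amatrixaut5uu} for $\mathfrak{a}_1$, just as you outline. Your additional bookkeeping check that the case hypotheses partition the parameter space consistently is a sensible elaboration of what the paper leaves implicit.
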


\bigskip
\noindent
Author's addresses: \\
\'Agota Figula, Institute of Mathematics, University of Debrecen, H-4002 Debrecen, P.O.Box 400, Hungary. {\it E-mail}: {\tt {}figula@science.unideb.hu} \\[1ex]
P\'eter T. Nagy, Institute of Applied Mathematics, \'Obuda University, 1034 Budapest, B\'ecsi \'ut 96/B, Hungary. {\it E-mail}: {\tt {}nagy.peter@nik.uni-obuda.hu}

\end{document}